\tikzset{Box/.style={very thick, rounded corners}}
\tikzset{marked/.style={star, star point height = .75mm, star points =5, fill=black,minimum size=2mm, inner sep=0mm} }
\tikzset{verythickline/.style = {line width=7pt}}
\tikzset{thickline/.style = {line width=5pt}}
\tikzset{medthick/.style = {line width=3pt}}
\tikzset{med/.style = {line width=2pt}}
\tikzset{count/.style = {fill=white,circle,draw,thin, inner sep=2pt}}
\tikzset{rcount/.style = {fill=white,rectangle,draw,thin,inner sep=2pt, rounded corners}}
\tikzset{cpr/.style = {draw,fill=white,rectangle,thin, rounded corners}}
\theoremstyle{plain}
	\newtheorem{theorem*}{Theorem}
	\newtheorem{theorem}{Theorem}[section]
	\newtheorem{proposition}[theorem]{Proposition}
	\newtheorem{lemma}[theorem]{Lemma}
	\newtheorem{corollary}[theorem]{Corollary}
\theoremstyle{definition}	\newtheorem{definition}[theorem]{Definition}
	\newtheorem{notation}[theorem]{Notation}
	\newtheorem{example}[theorem]{Example}
\theoremstyle{remark}
	\newtheorem{remark}[theorem]{Remark}
\DeclareMathOperator{\iden}{I}
\DeclareMathOperator{\NC}{NC}
\DeclareMathOperator{\BNC}{BNC}
\DeclareMathOperator{\tr}{tr}
\DeclareMathOperator{\alg}{alg}
\DeclareMathOperator{\Krew}{K}
\newcommand{\C}{\mathbb{C}}
\newcommand{\N}{\mathbb{N}}
\newcommand{\Z}{\mathbb{Z}}
\newcommand{\sx}{s_{\chi}}
\newcommand{\sxh}{s_{\widehat{\chi}}}
\newcommand{\sxv}{s_{{\chi}|_V}}
\newcommand{\xv}{\chi|_V}
\newcommand{\xh}{\widehat{\chi}}
\newcommand{\xhv}{\xh|_V}
\newcommand{\oxh}{\widehat{0_{\chi}}}
\renewcommand{\phi}{\varphi} 
\newcounter{my_enumerate_counter}
\newcommand{\pushcounter}{\setcounter{my_enumerate_counter}{\value{enumi}}}
\newcommand{\popcounter}{\setcounter{enumi}{\value{my_enumerate_counter}}}
\title{On Bi-R-Diagonal Pairs of Operators}
\author{Georgios Katsimpas}
\address{School of Mathematical Sciences, Harbin Engineering University, 145 Nantong Street, Nangang District, Harbin, P. R. China}
\email{gkats@hrbeu.edu.cn}
\begin{document}

\begin{abstract}
We study the properties of the analogue of R-diagonal operators in the setting
of bi-free probability. Products of bi-R-diagonal pairs of operators that are $*$-bi-free are studied and powers of such pairs are found to also be bi-R-diagonal. It
is moreover shown that the joint $*$-distribution of a bi-R-diagonal pair of operators
remains invariant under the multiplication by a $*$-bi-free bi-Haar unitary pair and
equivalent characterizations of  bi-R-diagonal pairs are developed.
\end{abstract}

\maketitle

\section{Introduction}\label{sec:intro}
In the theory of free probability, an R-diagonal operator is an element of a non-commutative $*$-probability space $(A,\varphi)$ whose $*$-distribution coincides with the $*$-distribution of a  product of the form $u\cdot p$, where the sets $\{u,u^*\}$ and $\{p,p^*\}$ are freely independent and $u$ is a Haar unitary, i.e. $u$ is a unitary and $\varphi(u^n)=0$, for all $n\in\Z\setminus\{0\}$. It is due to this free factorization property that the class of R-diagonal operators constitutes a particularly well-behaved class of non-normal operators.  From a combinatorial point of view, R-diagonal elements are characterized by having all of their free $*$-cumulants that are either of odd order, or have entries that are not alternating in $*$-terms and non-$*$-terms equal to zero. This combinatorial approach has proved to be extremely fruitful in the development of the theory of R-diagonal operators (see \cite{nicaspeicher} for an exposition of the combinatorics of free probability).

In \cite{approach}, R-diagonal operators were found to satisfy a ``free absorption'' property, namely that for any elements $a,b$ in some non-commutative $*$-probability space such that $a$ is R-diagonal and $a$ is $*$-free from $b$, the element $ab$ is also R-diagonal.  In \cite{haagerup}, Brown's spectral distribution measure was computed for R-diagonal operators in finite von Neumann algebras, while in \cite{larsenpowers}, powers of R-diagonal operators were shown to be R-diagonal and their determining sequences were computed (see also \cite[Theorem 15.22] {nicaspeicher} for a proof making use of combinatorial arguments).

In \cite{nicaspeichersh}, a number of equivalent characterizations of  R-diagonality were formulated, including conditions on $*$-moments, free cumulants and the freeness of certain self-adjoint matrices from the scalar matrices, with amalgamation over the diagonal scalar matrices, while in \cite{dykema} similar results were obtained on $B$-valued R-diagonal elements in the operator-valued setting.
Distributions of R-diagonal operators have found applications in the non-microstate approach to free entropy, answering questions regarding the minimization of the free Fisher information in the tracial framework (see \cite{fisher}). 

Bi-free probability theory originated in \cite{voiculescu} as an extension of the free setting and involves the simultaneous study of left and right actions of algebras on reduced free product spaces.  The corresponding notion of bi-free independence found its  combinatorial characterization in \cite{CNS2015-2} (see also \cite{CNS2015-1} for the development of the combinatorics of bi-free probability in the operator-valued setting). This paper is devoted to the study of the analogue of R-diagonal operators in the bi-free setting, namely bi-R-diagonal pairs of operators and, to this end,  the combinatorial approach originally proposed in \cite[Section 4]{S2016-2} shall be adopted, which makes use of the bi-free cumulant functions. For the study of products and powers of bi-R-diagonal pairs, similar arguments are used as to those corresponding to the results in the free case, but more care is required due to the dealing with the lattice of bi-non-crossing partitions and the $\chi$-order. Since products of pairs of operators are considered pointwise (i.e. left operators are multiplied by left operators and right operators are multiplied by right operators), caution ought to be exercised when it comes to the order in which the multiplication takes place and, for the most general cases, it is necessary that the order of the multiplication of right operators is reversed (see Theorem \ref{prodbir-bifree}). However, this is found not to play a role in the case when both  pairs in question are bi-R-diagonal and $*$-bi-free (Proposition \ref{prodbir-bir}). These results imply that bi-R-diagonal pairs of operators satisfy a  corresponding ``bi-free absorption'' property and indicate that such pairs of operators exist in abundance.  Powers of bi-R-diagonal pairs  (with equal exponents on the left and right operators) are shown to remain bi-R-diagonal (Theorem \ref{birpowers}) and various nuances in the case of distinct exponents are discussed and examined (Proposition \ref{prop:bi-R-diagpowerscounterexample}, Remark \ref{remark: every power is bi-R-diag}). Joint distributions of pairs of R-diagonal operators are also studied in the context of free products and tensor products, and it is shown that classically independent R-diagonal operators give rise to bi-R-diagonal pairs, which comes in contrast to the case of freely independent R-diagonal operators (Example \ref{example:freeR-diagnotbi-R-diag}  and Proposition \ref{prop:tensorproductofR-diag}). These results highlight the differences between R-diagonal operators in the free and bi-free settings and add to the richness of the structure of bi-R-diagonal pairs.

The absence of characterizations of bi-free phenomena with conditions on moments is an unfortunate theme in the theory of bi-free probability (see, however, \cite{C2019} for an equivalent formulation of bi-free independence in terms of alternating moments). In particular, a characterization of the condition of bi-R-diagonality in terms of $*$-moments was unable to be obtained. In the setting of free probability, one of the most salient features of the $*$-distribution of an R-diagonal operator is that it remains invariant after the multiplication by a freely independent Haar unitary, a result obtained with the use of freeness in terms of its characterization via moments (see \cite[Theorem 1.2]{nicaspeichersh} and \cite[Theorem 15.10]{nicaspeicher}). Bi-Haar unitary pairs of operators constitute the bi-free analogue of Haar unitaries and their joint $*$-distribution is modelled by the left and right regular representations of groups on Hilbert spaces. Theorem \ref{invdistr} is the generalization of the aforementioned fact to the bi-free setting and displays the invariance of the joint $*$-distribution of any bi-R-diagonal pair of operators under the multiplication of a $*$-bi-free bi-Haar unitary pair. The proof follows the combinatorial approach instead, using the bi-free cumulant functions and hence a new proof follows for the free case as well. In the spirit of \cite[Theorem 1.2]{nicaspeichersh},  \cite[Theorem 3.1]{dykema} and by combining results from \cite{S2016-2}, we obtain Theorem \ref{together}, displaying equivalent formulations of the condition of bi-R-diagonality.

The paper is organized as follows: In Section \ref{sctn:preliminaries} we list all the necessary preliminary
notions on bi-free probability theory and fix the appropriate notation. Here, the notion
of a bi-R-diagonal pair of operators is defined and a number of auxiliary results that will be used
in subsequent parts of this manuscript will be stated and proved. In Section \ref{sctn:products} we investigate the behaviour of bi-R-diagonal pairs under the taking of sums, products and
arbitrary powers, and we study joint distributions of pairs formed by freely and classically independent R-diagonal operators,  while Section \ref{sctn:distributions} is devoted to showing that the joint $*$-distributions of
bi-R-diagonal pairs remain invariant under the multiplication by $*$-bi-free, bi-Haar unitary
pairs. Most non-trivial combinatorial arguments are accompanied by representative diagrams for the convenience of the reader and  examples and counter-examples are provided whenever possible for the sake of completeness and clarity.

\section{Preliminaries and Notation}\label{sctn:preliminaries}
In this section we will develop the common preliminaries, fix the appropriate notation and state a number of lemmas to be used later in this paper.

Our main framework will be that of a \emph{non-commutative probability space}, i.e. a pair $(A,\varphi)$ where $A$ is a complex, unital algebra and $\varphi:A\rightarrow\C$ is a unital, linear map. If $A$ is a unital $*$-algebra and $\varphi:A\rightarrow\C$ is a additionally assumed to be positive, i.e. $\varphi$ satisfies that $\varphi (a^*a)\geq 0$ for all $a\in A$, then the pair $(A,\varphi)$ will be called a \emph{non-commutative $*$-probability space}. In addition, if $A$ is a unital $\mathrm{C}^*$-algebra and $\varphi:A\rightarrow\C$ is a state on $A$, then the pair $(A,\varphi)$ will be called a \emph{$\mathrm{C}^*$-probability space}.

 If $(A,\varphi)$ is a non-commutative probability space, for any set $S\subseteq A$ we will denote by $\alg(S)$ the subalgebra of $A$ generated by the set $S$. If $a_1,\ldots,a_n$ are elements of  $(A,\varphi)$, then their \emph{joint distribution} is given by the linear functional
\[
\mu:\C\langle X_1,\ldots,X_n\rangle\rightarrow\C
\]
defined as
\[
\mu (P) = \varphi(P(a_1,\ldots,a_n)),\quad (P\in \C\langle X_1,\ldots,X_n\rangle)
\]
where $\C\langle X_1,\ldots,X_n\rangle$ denotes the unital algebra of polynomials in $n$ non-commuting indeterminates $X_1,\ldots,X_n$.

If $(A,\varphi)$ is a non-commutative $*$-probability space and $S\subseteq A$ then we define 
$S^*=\{a^*:a\in S\}$. If $a_1,\ldots,a_n\in A$, then:
\begin{enumerate}[(a)]
\item  their \emph{joint  $*$-distribution} is given by the joint distribution of the family 
\[
\{a_1,\ldots,a_n,a_1^*,\ldots,a_n^*\},
\]
\item the family of their \emph{joint $*$-moments} is given by the action of their joint $*$-distribution $\mu $ on the monomials in $\C\langle X_1,\ldots,X_n\rangle$ and is therefore determined by the collection 
\[
\{\varphi(c_1\cdots c_k) : k\geq 1, c_i\in\{a_1,\ldots,a_n,a_1^*,\ldots,a_n^*\}  \text{ for all }1\leq i\leq k\}.
\]
\end{enumerate}
It is clear that for $a_1,\ldots,a_n,b_1,\ldots,b_n\in A$, in order to verify equality of joint $*$-distributions of the families $\{a_1,\ldots,a_n\}$ and $\{b_1,\ldots,b_n\}$, it suffices to prove that all of their joint $*$-moments coincide. For $a_1,\ldots,a_n\in A$ and $\emptyset\neq V=\{j_1<j_2<\ldots<j_s\}\subseteq\{1,\ldots,n\}$, the restriction of the sequence $(a_1,\ldots,a_n)$ to the set $V$ is given by
\[
(a_1,\ldots,a_n)|_V = (a_{j_1},a_{j_2},\ldots,a_{j_s}).
\]
In this case, we define
\[
\varphi((a_1,\ldots,a_n)|_V) = \varphi(a_{j_1}\cdot a_{j_2}\cdot\ldots\cdot a_{j_s}).
\]
Also, if $\pi$ is a partition of the set $\{1,\ldots,n\}$, then we use the following notation:
\[
\varphi_{\pi}(a_1,\ldots,a_n) = \prod_{V\in\pi}\varphi((a_1,\ldots,a_n)|_V).
\]
\subsection{The Lattice of Bi-Non-Crossing Partitions}\label{sctn:bnc}
Familiarity with the collection of non-crossing partitions $\NC(n)$, multiplicative functions on $\NC(n)$ and free cumulants is assumed  (see \cite{nicaspeicher} for an exposition of the combinatorics of free probability).

For $n\in\N$, we will be using maps $\chi\in {\{l,r\}}^n$ to distinguish between left and right operators in a sequence of $n$ operators. Any such  map gives rise to a permutation $\sx$ on $\{1,\ldots,n\}$ as follows:  if ${\chi}^{-1}(\{l\}) = \{i_1<\ldots <i_p\}$ and ${\chi}^{-1}(\{r\}) = \{j_1<\ldots <j_{n-p}\}$, then we define
\[   
\sx(k) = 
     \begin{cases}
       i_k, &\text{if } k\leq p\\
       j_{n+1-k}, &\text{if } k>p. 
     \end{cases}
\]
From a combinatorial standpoint, the only differences between free and bi-free probability arise from dealing with $\sx$.  The permutation $\sx$ naturally induces a total order on $\{1,\ldots,n\}$ (which we will henceforth be referring to as the \textit{$\chi $-order}) as follows:
\[
i \prec_\chi j \iff \sx^{-1}(i)<  \sx^{-1}(j).
\]
Instead of reading $\{1,\ldots,n\}$ in the traditional order, this corresponds to first reading the elements of $\{1,\ldots,n\}$ labelled ``$l$'' in increasing order, followed by reading the elements labelled ``$r$'' in decreasing order. Note that if $V$ is any non-empty subset of $\{1,\ldots,n\}$, the map $\xv$ naturally gives rise to a map $\sxv$, which should be thought of as a permutation on $\{1,\ldots,|V|\}$. A set $I\subseteq\{1,\ldots,n\}$ is called a \emph{$\chi$-interval} if $I$ is an interval with respect to the $\chi$-order. Note that any $\chi$-interval is of the form $\{\sx(i),\sx(i+1),\ldots,\sx(i+j)\}$ for $1\leq i\leq n$ and  $0\leq j\leq n-i$.

Before we discuss the lattice of bi-non-crossing partitions, we fix some notation regarding general partitions. For $n\in\N$, the collection of all partitions on $\{1,\ldots,n\}$ is denoted by $\mathcal{P}(n)$, while the collection of non-crossing partitions on $\{1,\ldots,n\}$ is denoted by $\NC(n)$. The elements of any $\pi\in \mathcal{P}(n)$ are called the \textit{blocks} of $\pi$ and for $1\leq i,j\leq n$, we write $i {\sim}_{\pi} j$ to mean that $i$ and $j$ belong to the same block of $\pi$, whereas $i \nsim_\pi j$ indicates that $i$ and $j$ belong to different blocks of $\pi$. For $\pi,\sigma\in\mathcal{P}(n)$, we write $\pi\leq\sigma$ if every block of $\pi$ is contained in a block of $\sigma$. This defines the partial order of refinement on $\mathcal{P}(n)$. The maximal element of $\mathcal{P}(n)$ with respect to this partial order is the partition consisting of one block (denoted by $1_n$), while the minimal element is the partition consisting of $n$ blocks (denoted by $0_n$). This partial order induces a lattice structure on $\mathcal{P}(n)$, hence for $\pi,\sigma\in\mathcal{P}(n)$, the \emph{join} $\pi\vee \sigma$ (i.e. the minimum element of the non-empty set $\{\rho\in\mathcal{P}(n) : \rho\geq\pi,\sigma\}$) of $\pi$ and $\sigma$ is well defined.

\begin{definition}\label{BNC}
Let $n\in\N$ and $ \chi\in {\{l,r\}}^n$. A partition $\tau\in\mathcal{P}(n)$ is called \textit{bi-non-crossing with respect to $\chi$} if the partition $\sx^{-1}\cdot\tau$ (i.e. the partition obtained by applying the permutation $\sx^{-1}$ to each entry of every block of $\tau$) is non-crossing. Equivalently, $\tau$ is bi-non-crossing with respect to $\chi$ if whenever $V,W$ are blocks of $\tau$ and $v_1,v_2\in V , w_1,w_2\in W$ are such that
\[
v_1 \prec_\chi w_1 \prec_\chi v_2 \prec_\chi w_2,
\]
then we necessarily have that $V=W$.  The collection of bi-non-crossing partitions with respect to $\chi$ is denoted by $\BNC(\chi)$. It is clear that
\[
\BNC(\chi) = \{\tau\in\mathcal{P}(n) : \sx^{-1}\cdot\tau\in \NC(n)\} = \{\sx\cdot\pi : \pi\in \NC(n)\}.
\]
\end{definition}
We will be referring to a partition $\tau$ simply as bi-non-crossing whenever it is clear from the context  which map $\chi$ is used. Note that in the special case when the map $\chi$ is constant, one ends up with the collection of all non-crossing partitions on $\{1,\ldots,n\}$.

\begin{example}
If $\chi\in {\{l,r\}}^6$ is such that $\chi^{-1}(\{l\}) = \{1,2,3,6\}$ and $\chi^{-1}(\{r\}) = \{4,5\}$, then $(\sx(1),\ldots,\sx(6)) = (1,2,3,6,5,4)$ and the partition given by 
\[
\tau = \{\{1,4\}, \{2,5\}, \{3,6\}\}
\]
is bi-non-crossing with respect to $\chi$, even though $\tau\notin \NC(6)$. This may also be seen via the following diagrams:

\begin{equation*}
\begin{tikzpicture}[baseline]
			\draw[thick,dashed] (-.5,2.4) -- (-.5,-.25) -- (1.5,-.25) -- (1.5,2.4);
\node[left] at (-.5,2.3) {1};
			\draw[black,fill=black] (-.5,2.3) circle (0.05);
\node[left] at (-.5,1.85) {2};
			\draw[black,fill=black] (-.5,1.85) circle (0.05);
\node[left] at (-.5,1.4) {3};
			\draw[black,fill=black] (-.5,1.4) circle (0.05);
\node[right] at (1.5,0.95) {4};
			\draw[black,fill=black] (1.5,0.95) circle (0.05);
\node[right] at (1.5,0.5) {5};
			\draw[black,fill=black] (1.5,0.5) circle (0.05);
\node[left] at (-.5,0.01) {6};
			\draw[black,fill=black] (-.5,0.01) circle (0.05);
			\draw[thick, black] (-.5,2.3) -- (0.9,2.3) -- (0.9,0.95) -- (1.5,0.95);
			\draw[thick, black] (-.5,1.85) -- (0.5,1.85) -- (0.5,0.5) -- (1.5,0.5);
			\draw[thick, black] (-.5,1.4) -- (0.15,1.4) -- (0.15, 0.01) -- (-.5,0.01);
	\end{tikzpicture}
	\qquad\longrightarrow	\qquad
\begin{tikzpicture}[baseline]
			\draw[thick,dashed] (5,0) -- (9,0);
\filldraw (5.2,0) circle (0.06) node[anchor=north] {1};
\filldraw (5.9,0) circle (0.06) node[anchor=north] {2};
\filldraw (6.6,0) circle (0.06) node[anchor=north] {3};
\filldraw (7.3,0) circle (0.06) node[anchor=north] {6};
\filldraw (8,0) circle (0.06) node[anchor=north] {5};
\filldraw (8.7,0) circle (0.06) node[anchor=north] {4};	
\draw[thick, black] (5.2,0) -- (5.2,1.5) -- (8.7,1.5) -- (8.7,0);	
\draw[thick, black] (5.9,0) -- (5.9,1.1) -- (8,1.1) -- (8,0);	
\draw[thick, black] (6.6,0) -- (6.6,0.65) -- (7.3,0.65) -- (7.3,0);	
			\end{tikzpicture}
			\qquad
				\end{equation*}

\end{example}
The set of bi-non-crossing partitions with respect to a map $\chi\in {\{l,r\}}^n$ inherits a lattice structure from $\mathcal{P}(n)$ via the partial order of refinement (although the join operation in $\BNC(\chi)$ need not coincide with the restriction of the join operation in $\mathcal{P}(n))$. The minimal and maximal elements of $\BNC(\chi)$ will be denoted  by $0_{\chi}$ and $1_{\chi}$ respectively (with $0_{\chi}=\sx(0_n)=0_n$ and $1_{\chi}=\sx(1_n)=1_n$).  For $\emptyset\neq V\subseteq\{1,\ldots,n\}$, we denote by ${\min}_< V$ and ${\min}_{{\prec}_{\chi}}V$ the minimum  element of $V$ with respect to the natural order and the ${\chi}$-order of $\{1,\ldots,n\}$ respectively. Similar notation will be used for such maximum elements.

\begin{definition}\label{defbncmobius}
The \emph{bi-non-crossing M{\"o}bius function} is the map
\[
\mu_{\BNC}: \bigcup_{n\in\N} \bigcup_{\chi\in {\{l,r\}}^n} \BNC(\chi)\times\BNC(\chi)\rightarrow\C
\]
defined recursively by
\[
\sum_{\substack{\rho\in\BNC(\chi)\\
\tau\leq\rho\leq\lambda\\}}{\mu}_{\BNC}(\tau,\rho) = \sum_{\substack{\rho\in\BNC(\chi)\\
\tau\leq\rho\leq \lambda\\}}{\mu}_{\BNC}(\rho,\lambda) =
     \begin{cases}
       1, &\text{if }  \tau=\lambda\\
        0, &\text{if }  \tau <\lambda,\\
     \end{cases}
\]
whenever $\tau\leq\lambda$, while taking the zero value otherwise.
\end{definition}
The connection between the bi-non-crossing M{\"o}bius function and the M{\"o}bius function on the lattice of non-crossing partitions $\mu_{\NC}$ is given by the formula
\[
{\mu}_{\BNC}(\tau,\lambda) = {\mu}_{\NC}(\sx^{-1}\cdot\tau, \sx^{-1}\cdot\lambda),
\]
for all $\tau\leq\lambda\in\BNC(\chi)$ and hence $\mu_{\BNC}$ inherits many of the multiplicative properties of $\mu_{\NC}$ (see \cite[Section 3]{CNS2015-2}).

The \emph{Catalan numbers} ${\{C_n\}}_{n\in\N}$ form a sequence of positive integers frequently used in the field of combinatorics; it is well known that the  $n$-th Catalan number $C_n$ equals the number of non-crossing partitions  on a set of $n$ elements and, as a result, also equals the number of bi-non-crossing partitions with respect to any map $\chi\in {\{l,r\}}^n$ (see \cite[Proposition 9.4]{nicaspeicher}). This sequence will come up when we make reference to the joint $*$-distribution of bi-Haar unitary pairs of operators (Corollary \ref{bihaarcum}). We state the following lemma tying the values of the bi-non-crossing  M{\"o}bius function with the Catalan numbers.

\begin{lemma}\label{lemmamobius}
Let $n\in\N$ and $\chi\in {\{l,r\}}^n$. Then for all $\tau\in\BNC(\chi)$ we have 
\[
\mu_{\BNC}(0_\chi,\tau) = \prod_{V\in\tau}{(-1)}^{|V|-1}\cdot C_{|V|-1}.
\]
In particular,
\[
\mu_{\BNC}(0_\chi, 1_\chi) = {(-1)}^{n-1}\cdot C_{n-1},
\]
where $C_n$ denotes the $n$-th Catalan number.
\end{lemma}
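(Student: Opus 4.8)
The plan is to transport the computation from $\BNC(\chi)$ to the familiar lattice of non-crossing partitions by means of the relation ${\mu}_{\BNC}(\tau,\lambda) = {\mu}_{\NC}(\sx^{-1}\cdot\tau, \sx^{-1}\cdot\lambda)$ recorded just above the statement. Since $0_{\chi}=0_n$ and the permutation $\sx^{-1}$ fixes $0_n$ as a partition, applying this relation with the pair $(0_{\chi},\tau)$ gives ${\mu}_{\BNC}(0_{\chi},\tau) = {\mu}_{\NC}(0_n,\pi)$, where $\pi:=\sx^{-1}\cdot\tau\in\NC(n)$ by Definition \ref{BNC}. The blocks of $\pi$ are precisely the images under $\sx^{-1}$ of the blocks of $\tau$, so $\tau$ and $\pi$ have the same multiset of block cardinalities; hence it suffices to establish the identity ${\mu}_{\NC}(0_n,\pi) = \prod_{W\in\pi}{(-1)}^{|W|-1}C_{|W|-1}$ for an arbitrary $\pi\in\NC(n)$.

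The next step is to invoke the multiplicativity of the non-crossing Möbius function: the interval $[0_n,\pi]$ in $\NC(n)$ is canonically lattice-isomorphic to $\prod_{W\in\pi}[0_{|W|},1_{|W|}] = \prod_{W\in\pi}\NC(|W|)$, and consequently ${\mu}_{\NC}(0_n,\pi) = \prod_{W\in\pi}{\mu}_{\NC}(0_{|W|},1_{|W|})$ (see \cite[Lecture 10]{nicaspeicher}). This reduces the lemma to the single evaluation ${\mu}_{\NC}(0_m,1_m) = {(-1)}^{m-1}C_{m-1}$, valid for all $m\in\N$, which is the standard value of the non-crossing Möbius function on a full interval and which I would cite directly from \cite[Lecture 10]{nicaspeicher}; alternatively it follows by induction on $m$ from the defining recursion $\sum_{0_m\le\rho\le 1_m}{\mu}_{\NC}(0_m,\rho)=0$ ($m\ge 2$) together with the multiplicativity just invoked, after grouping the partitions $\rho$ by the size $k$ of the block containing a fixed element and recognizing the resulting count as the Catalan recurrence.

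Combining the three steps yields ${\mu}_{\BNC}(0_{\chi},\tau) = \prod_{W\in\pi}{(-1)}^{|W|-1}C_{|W|-1} = \prod_{V\in\tau}{(-1)}^{|V|-1}C_{|V|-1}$, where the last equality uses that $\sx^{-1}$ preserves block sizes. Specializing to $\tau=1_{\chi}=1_n$, which consists of a single block of size $n$, gives ${\mu}_{\BNC}(0_{\chi},1_{\chi}) = {(-1)}^{n-1}C_{n-1}$. There is no essential obstacle here: the entire content lies in the two classical facts about $\NC(n)$ (multiplicativity and the value on $1_m$), and the only bookkeeping point is to observe that conjugating blocks by $\sx^{-1}$ merely permutes the ground set and leaves all block cardinalities — hence all Möbius values — unchanged.
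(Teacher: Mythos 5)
Your proof is correct and follows essentially the same route the paper indicates: transporting to $\NC(n)$ via ${\mu}_{\BNC}(0_{\chi},\tau)={\mu}_{\NC}(0_n,\sx^{-1}\cdot\tau)$, then applying the canonical factorization of the interval $[0_n,\pi]$ and the standard value ${\mu}_{\NC}(0_m,1_m)={(-1)}^{m-1}C_{m-1}$ from \cite{nicaspeicher}. The observation that $\sx^{-1}$ preserves block cardinalities is exactly the bookkeeping point needed, so nothing is missing.
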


Due to the connection between $\mu_{\BNC}$ and $\mu_{\NC}$, the proof of the aforementioned  lemma is based on facts regarding the behaviour of multiplicative functions on $\NC(n)$. More specifically, it relies on the canonical factorization of intervals in the lattice of non-crossing partitions and on the multiplicative properties of the M{\"o}bius function $\mu_{\NC}$ (see \cite[Theorem 9.29, Proposition 10.14 and 10.15]{nicaspeicher}).

The \emph{Kreweras complementation map} $\Krew_{\NC}:\NC(n)\rightarrow\NC(n)$  defined in \cite{kreweras} is an important example of a lattice anti-isomorphism. For its descripition, we introduce new symbols $\overline{1},\overline{2},\ldots,\overline{n}$ and consider them interlaced with $1,2,\ldots,n$ in the following manner:
\[
1 \overline{1} 2 \overline{2}\ldots n\overline{n}.
\]
For $\pi\in\NC(n)$, its Kreweras complement $\Krew_{\NC}(\pi)\in\NC(\{\overline{1},\overline{2},\ldots,\overline{n}\})\cong\NC(n)$ is defined to be the largest non-crossing partition having the property 
\[
\pi\cup \Krew_{\NC}(\pi)\in\NC(\{1, \overline{1}, 2 ,\overline{2}\ldots n,\overline{n}\}).
\]
The complementation map found its generalization for the lattice of bi-non-crossing partitions in  \cite[Section 5]{CNS2015-2}. Specifically, for any $n\in\N,\chi\in {\{l,r\}}^n$ and $\tau\in\BNC(\chi)$, the Kreweras complement of $\tau$ in $\BNC(\chi)$, denoted by $\Krew_{\BNC}(\tau)$, is defined as
\[
\Krew_{\BNC}(\tau) = \sx\cdot \Krew_{\NC}(\sx^{-1}\cdot\tau),
\]
i.e. is given by applying the permutation $\sx$ to the Kreweras complement of $\sx^{-1}\cdot\tau$ in $\NC(n)$. Note that in the special case when $\chi\in {\{l,r\}}^n$ gives the constant value ``$l$'', one obtains $\Krew_{\NC}$. In the following lemma, we list properties of $\Krew_{\BNC}$ that are relevant to our purposes. 

\begin{lemma}\label{kncproperties}
Let $n\in\N$ and $\chi\in {\{l,r\}}^n$. Then:
\begin{enumerate}[(i)]
\item $\Krew_{\BNC} : \BNC(\chi)\rightarrow\BNC(\chi)$ is a bijection,
\item $\Krew_{\BNC}(0_{\chi})=1_{\chi}$ and $\Krew_{\BNC}(1_{\chi})=0_{\chi}$,
\item For all $\tau,\lambda\in\BNC(\chi)$ we have 
\[
\tau\leq\lambda\iff \Krew_{\BNC}(\lambda)\leq \Krew_{\BNC}(\tau)\iff \Krew_{\BNC}^{-1}(\lambda) \leq \Krew_{\BNC}^{-1}(\tau).
\]
\end{enumerate}
\end{lemma}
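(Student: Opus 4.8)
The plan is to prove Lemma \ref{kncproperties} by transporting the corresponding well-known properties of the Kreweras complement $K_{\NC}$ on $\NC(n)$ across the bijection $\pi\mapsto\sx\cdot\pi$ from $\NC(n)$ to $\BNC(\chi)$. Recall from Definition \ref{BNC} that this map is a bijection with inverse $\tau\mapsto\sx^{-1}\cdot\tau$, and note that it is moreover an \emph{order isomorphism} for the refinement order: applying a fixed permutation to the entries of the blocks of a partition preserves the property that every block of one partition is contained in a block of another, since $V\subseteq W$ if and only if $\sx(V)\subseteq\sx(W)$. By definition $K_{\BNC}=\sx\circ K_{\NC}\circ\sx^{-1}$, so each claim about $K_{\BNC}$ will follow by conjugating the analogous claim about $K_{\NC}$.

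\textbf{Part (a).} Since $K_{\NC}:\NC(n)\to\NC(n)$ is a bijection (it is a lattice anti-automorphism, hence in particular bijective — see \cite{kreweras} or \cite{nicaspeicher}) and both $\sx\cdot(-)$ and $\sx^{-1}\cdot(-)$ are bijections between $\NC(n)$ and $\BNC(\chi)$, the composite $K_{\BNC}=\sx\cdot K_{\NC}(\sx^{-1}\cdot(-))$ is a bijection of $\BNC(\chi)$ onto itself.

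\textbf{Part (b).} It is a standard fact that $K_{\NC}$ is order-reversing, and since it is a bijection on the finite lattice $\NC(n)$ the reverse implication holds as well, so $\pi\leq\rho\iff K_{\NC}(\rho)\leq K_{\NC}(\pi)$ for all $\pi,\rho\in\NC(n)$. Now take $\tau,\lambda\in\BNC(\chi)$ and write $\pi=\sx^{-1}\cdot\tau$, $\rho=\sx^{-1}\cdot\lambda$, which lie in $\NC(n)$. Using that $\sx^{-1}\cdot(-)$ is an order isomorphism, $\tau\leq\lambda\iff\pi\leq\rho\iff K_{\NC}(\rho)\leq K_{\NC}(\pi)$; applying the order isomorphism $\sx\cdot(-)$ to the last inequality and recalling the definition of $K_{\BNC}$ gives $K_{\BNC}(\lambda)\leq K_{\BNC}(\tau)$. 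This proves the first equivalence; the second follows by replacing $K_{\BNC}$ with $K_{\BNC}^{-1}$, which is also order-reversing because the inverse of an order-reversing bijection on a finite poset is again order-reversing (equivalently, apply the first equivalence with $\tau,\lambda$ replaced by $K_{\BNC}^{-1}(\lambda),K_{\BNC}^{-1}(\tau)$).

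\textbf{Part (c).} Since $\sx$ fixes $0_n$ and $1_n$ (as noted after Definition \ref{BNC}, $0_\chi=\sx(0_n)=0_n$ and $1_\chi=\sx(1_n)=1_n$), and since $K_{\NC}(0_n)=1_n$ and $K_{\NC}(1_n)=0_n$ — immediate from the definition of the Kreweras complement, the largest non-crossing partition $K_{\NC}(0_n)$ making $0_n\cup K_{\NC}(0_n)$ non-crossing on the interlaced points being the single block on $\{\overline 1,\dots,\overline n\}$, and dually for $1_n$ — we get $K_{\BNC}(0_\chi)=\sx\cdot K_{\NC}(\sx^{-1}\cdot 0_n)=\sx\cdot K_{\NC}(0_n)=\sx\cdot 1_n=1_n=1_\chi$, and symmetrically $K_{\BNC}(1_\chi)=0_\chi$.

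There is essentially no obstacle here: the entire content is the observation that conjugation by the bijection $\sx\cdot(-)$ turns lattice-theoretic statements about $\NC(n)$ into the corresponding statements about $\BNC(\chi)$, together with citing the classical facts about $K_{\NC}$. The only point requiring a moment's care is the reverse implications in (b), where one must invoke finiteness (or bijectivity) of the map rather than pure order-theoretic formalism, and the handling of $K_{\BNC}^{-1}$, for which it is cleanest to note that an order-reversing bijection of a finite poset has order-reversing inverse.
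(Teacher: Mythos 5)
Your proof is correct and follows exactly the route the paper intends: the paper offers no detailed argument, remarking only that "all of these properties are easily verified by the definition of $K_{\BNC}$ and by the corresponding properties which hold for $K_{\NC}$," and your write-up is precisely that verification, transporting the standard facts about $K_{\NC}$ through the order isomorphism $\pi\mapsto\sx\cdot\pi$.
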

All of these properties are easily verified by the definition of $\Krew_{\BNC}$ and by the corresponding properties which hold for $\Krew_{\NC}$. We shall now state a combinatorial lemma, which may be of independent interest and involves the following cancellation property for the lattice of bi-non-crossing partitions. A special case of the next result will play a key role in the proof of Lemma \ref{lemmainv1}.

\begin{lemma}\label{techlemmaBNC}
Let $n\in\N, \chi\in {\{l,r\}}^n$ and consider a family ${\{d_{\tau}\}}_{\tau\in\BNC(\chi)}$ of  indeterminates indexed by the bi-non-crossing partitions $\BNC(\chi)$. Then, the following holds:
\[
\sum_{\tau\in\BNC(\chi)}
\left(   {\mu}_{\BNC}(0_{\chi},\tau)\cdot  \sum_{\substack{\lambda\in \BNC(\chi)\\
\lambda\leq \Krew_{\BNC}(\tau)\\}}d_{\lambda}  \right) = d_{1_{\chi}}.
\]
\end{lemma}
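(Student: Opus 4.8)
The plan is to reduce the identity to a known cancellation fact in the lattice of non-crossing partitions by transporting everything through the permutation $\sx$. First I would rewrite the left-hand side by interchanging the order of summation: grouping by $\lambda$, the coefficient of $d_\lambda$ becomes
\[
\sum_{\substack{\tau\in\BNC(\chi)\\ \lambda\leq K_{\BNC}(\tau)}}\mu_{\BNC}(0_\chi,\tau).
\]
By Lemma \ref{kncproperties}(b) the condition $\lambda\leq K_{\BNC}(\tau)$ is equivalent to $K_{\BNC}^{-1}(K_{\BNC}(\tau))=\tau\leq K_{\BNC}^{-1}(\lambda)$, so (since $K_{\BNC}$ is a bijection by part (a)) the coefficient of $d_\lambda$ is exactly $\sum_{\tau\leq K_{\BNC}^{-1}(\lambda)}\mu_{\BNC}(0_\chi,\tau)$. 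By the defining property of $\mu_{\BNC}$ (Definition \ref{defbncmobius}), this sum equals $1$ if $K_{\BNC}^{-1}(\lambda)=0_\chi$ and $0$ otherwise; and $K_{\BNC}^{-1}(\lambda)=0_\chi$ holds precisely when $\lambda=K_{\BNC}(0_\chi)=1_\chi$ by Lemma \ref{kncproperties}(c). Hence the only surviving term is $d_{1_\chi}$, which is the claim.

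The subtle point I want to handle carefully is the direction of the Kreweras inequalities: the summation condition is $\lambda\le K_{\BNC}(\tau)$, and I need to turn a constraint on $K_{\BNC}(\tau)$ into a constraint on $\tau$ itself so that I can apply the Möbius recursion, which sums over $\tau$ in an interval with fixed lower endpoint $0_\chi$. Applying $K_{\BNC}^{-1}$ (equivalently, using the second equivalence in Lemma \ref{kncproperties}(b)) reverses the inequality to $\tau\le K_{\BNC}^{-1}(\lambda)$, which is of exactly the right shape. One should also note that the change of summation order is legitimate because both sums are finite. I expect this to be the only place where a reader might worry, since the rest is a direct invocation of the defining relation for the Möbius function.

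Alternatively, and perhaps more transparently, I would phrase the whole computation upstairs in $\NC(n)$. Writing $\tau=\sx\cdot\pi$ and $\lambda=\sx\cdot\rho$ with $\pi,\rho\in\NC(n)$, using $\mu_{\BNC}(0_\chi,\sx\cdot\pi)=\mu_{\NC}(0_n,\pi)$ and $K_{\BNC}(\sx\cdot\pi)=\sx\cdot K_{\NC}(\pi)$ together with the order-preservation of $\pi\mapsto\sx\cdot\pi$, the identity becomes the classical statement
\[
\sum_{\pi\in\NC(n)}\Big(\mu_{\NC}(0_n,\pi)\sum_{\rho\le K_{\NC}(\pi)}d_{\rho}\Big)=d_{1_n},
\]
which is a well-known cancellation in the non-crossing lattice (it is, for instance, the combinatorial skeleton behind the moment–cumulant interplay with Kreweras complements, cf.\ \cite{nicaspeicher}). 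Either way, the argument is essentially a one-line consequence of the Möbius relation once the Kreweras inequalities are used in the correct direction; the main (minor) obstacle is simply keeping track of which map is applied to which side. I would present the direct $\BNC$ version as the proof and perhaps remark on the $\NC$ translation.
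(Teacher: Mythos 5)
Your proposal is correct and follows essentially the same route as the paper: interchange the order of summation, use the order-reversing property of the Kreweras complement (Lemma \ref{kncproperties}) to rewrite the condition $\lambda\leq K_{\BNC}(\tau)$ as $\tau\leq K_{\BNC}^{-1}(\lambda)$, and then invoke the defining recursion for $\mu_{\BNC}$ together with $K_{\BNC}(0_\chi)=1_\chi$. The only cosmetic difference is that the paper introduces $\lambda'$ with $\lambda=K_{\BNC}(\lambda')$ rather than writing $K_{\BNC}^{-1}(\lambda)$ directly; your optional translation to $\NC(n)$ is a fine remark but not needed.
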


\begin{proof}
Re-arranging the left hand-side of the above expression yields
\[
\sum_{\tau\in\BNC(\chi)}
\left(   {\mu}_{\BNC}(0_{\chi},\tau)\cdot  \sum_{\substack{\lambda\in\BNC(\chi)\\
\lambda\leq \Krew_{\BNC}(\tau)\\}}d_{\lambda}  \right)  = \sum_{\lambda\in\BNC(\chi)}
\left( d_{\lambda}  \cdot  \sum_{\substack{\tau\in\BNC(\chi)\\
\lambda\leq \Krew_{\BNC}(\tau)\\}}{\mu}_{\BNC}(0_{\chi},\tau)  \right). 
\]
Thus to prove the conclusion of the lemma, it suffices to show that for all $\lambda\in\BNC(\chi)$, we have 
\[
\sum_{\substack{\tau\in\BNC(\chi)\\
\lambda\leq \Krew_{\BNC}(\tau)\\}}{\mu}_{\BNC}(0_{\chi},\tau) = 
     \begin{cases}
       1, &\text{ if } \lambda=1_{\chi}\\
        0, &\text{ if } \lambda < 1_{\chi}.\\
     \end{cases}
\]
Fix $\lambda\in\BNC(\chi)$ and let $\lambda'\in\BNC(\chi)$ be such that $\lambda = \Krew_{\BNC}(\lambda')$. Observe that since
\[
\lambda\leq \Krew_{\BNC}(\tau)\iff \Krew_{\BNC}(\lambda')\leq \Krew_{\BNC}(\tau)\iff \tau\leq\lambda',
\]
we have that
\[
\{\tau\in\BNC(\chi) : \lambda\leq \Krew_{\BNC}(\tau)\} = \{\tau\in\BNC(\chi) : \tau\leq\lambda'\}.
\]
Elementary properties of the M{\"o}bius function  on the lattice of bi-non-crossing partitions  imply that
\[
\sum_{\substack{\tau\in\BNC(\chi)\\
\lambda\leq \Krew_{\BNC}(\tau)\\}}{\mu}_{\BNC}(0_{\chi},\tau) = \sum_{\substack{\tau\in\BNC(\chi)\\
0_{\chi}\leq\tau\leq\lambda'\\}}{\mu}_{\BNC}(0_{\chi},\tau) = 
     \begin{cases}
       1, &\text{ if } 0_{\chi}=\lambda'\\
        0, &\text{ if }  0_{\chi} < \lambda'.\\
     \end{cases}
\]
Then, an application of Lemma \ref{kncproperties} shows
\[
0_{\chi}=\lambda'\iff \Krew_{\BNC}^{-1}(1_{\chi})=\Krew_{\BNC}^{-1}(\lambda)\iff \lambda=1_{\chi}
\]
and
\[
0_{\chi} <\lambda'\iff \Krew_{\BNC}^{-1}(1_{\chi}) < \Krew_{\BNC}^{-1}(\lambda)\iff \lambda < 1_{\chi}.
\]
This completes the proof.
\end{proof}
Of course, when the map $\chi\in {\{l,r\}}^n$ is constant with  constant value ``$l$'', one obtains the analogous result for the lattice of non-crossing partitions. It is this case that will be used in the proof of Lemma \ref{lemmainv1}.

\subsection{Bi-Free Independence and Bi-Free Cumulants}\label{sctn:bifreec}
We begin by recalling the notion of bi-free independence for pairs of faces in a non-commutative probability space, originally developed in \cite{voiculescu}.

\begin{definition}\label{defbifree}
Let $(A,\varphi)$ be a non-commutative probability space. 
\begin{enumerate}[(i)]
\item A \emph{pair of faces} in $(A,\varphi)$ consists of a pair $(C,D)$ of unital subalgebras of $A$,
\item a family ${\{(C_k,D_k)\}}_{k\in K}$ of pairs of faces in $(A,\varphi)$ is said to be \emph{bi-freely independent} (or simply \emph{bi-free}) if there exists a family of vector spaces with specified vector states ${\{(\mathcal{X}_k,\overset{\circ}{\mathcal{X}}_k,\xi_k)\}}_{k\in K}$ and unital homomorphisms
\[
l_k:C_k\rightarrow \mathcal{L}(\mathcal{X}_k)\text{ and }r_k:D_k\rightarrow \mathcal{L}(\mathcal{X}_k),
\]
(where $\mathcal{L}(\mathcal{X}_k)$ denotes the space of all linear maps on $\mathcal{X}_k$) such that the joint distribution of the family ${\{(C_k,D_k)\}}_{k\in K}$ with respect to $\varphi$ coincides with the joint distribution of the family 
\[
{\{((\lambda_k\circ l_k)(C_k),(\rho_k\circ r_k) (D_k))\}}_{k\in K}
\]
with respect to the vacuum state on $\mathcal{L}(*_{k\in K}\mathcal{X}_k)$, where $\lambda_k,\rho_k$ denote the left and right regular representations onto $\mathcal{X}_k\subseteq *_{k\in K}\mathcal{X}_k$ respectively,
\item if $S_k$ and $V_k$ are subsets of $A$ for all $k\in K$, then the family ${\{(S_k,V_k)\}}_{k\in K}$ will be said to be \emph{bi-free} if the family of pairs of faces 
\[
{\{(\alg(1_A\cup S_k), \alg(1_A \cup V_k))\}}_{k\in K}
\]
is bi-free,
\item if $(A,\varphi)$ is a non-commutative $*$-probability space and  $S_k$ and $V_k$ are subsets of $A$ for all $k\in K$, then the family ${\{(S_k,V_k)\}}_{k\in K}$ will be said to be \emph{$*$-bi-free} if the family  
\[
{\{(S_k\cup S_k^*,V_k\cup V_k^*)\}}_{k\in K}
\]
is bi-free.
\end{enumerate}
\end{definition}

In the following remark we list some salient properties of bi-free pairs following their definition.
\begin{remark}\label{remark:propertiesofbi-freepairs}
 \begin{enumerate}[(i)]
     \item If the family of pairs of faces ${\{(C_k,D_k)\}}_{k\in K}$ in the non-commutative probability space $(A,\varphi)$ is bi-free, then both the families ${(C_k)}_{k\in K}$ and ${(D_k)}_{k\in K}$ are freely independent. Moreover, for all $k\neq k'\in K$, the algebras $C_k$ and $D_{k'}$ are classically independent. In addition, if $C$ and $D$ are classically independent unital subalgebras of $(A,\varphi)$, then the pairs of faces $(C,\C1_A)$ and $(\C1_A,D)$ are bi-free (\cite[Proposition 2.15, 2.16]{voiculescu}).
     \item If  
     ${\{(C_k,D_k)\}}_{k\in K}$ is a family of pairs of unital algebras 
      (with each pair contained in some non-commutative probability space $(A_k,\varphi_k)$),
      then we can always embed these pairs into a larger non-commutative probability space in which they will be bi-free and all of their joint distributions will remain intact (see \cite[Corollary 2.10]{voiculescu} and also \cite[Theorem 6.4.1]{CNS2015-1}). For $k\in K$, suppose that the unital, linear functional $\mu_k:C_k*D_k\rightarrow\C$ defined on the algebraic free product (with amalgamation over $\C$) of $C_k$ and $D_k$ represents the joint distribution of the algebras $C_k,D_k$, so that $\mu_k=\varphi_k\circ a_k$, where $\alpha_k:C_k*D_k\rightarrow A_k$ is the (unique) unital homomorphism such that the restrictions $\alpha_k|_{C_k}$ and $\alpha_k|_{D_k}$ are the inclusion maps. Then there exists a (necessarily unique) unital, linear functional $\mu:*_{k\in K}(C_k*D_k)\rightarrow\C$ such that the family $\{(C_k,D_k)\}_{k\in K}$ is bi-free in the non-commutative probability space $\left(*_{k\in K}(C_k*D_k),\mu\right)$ and such that $\mu|_{C_k*D_k}=\mu_k$ for all $k\in K$. In addition, if each $(A_k,\varphi_k)$ is a $\mathrm{C}^*$-probability space and each $C_k,D_k$ is a unital $\mathrm{C}^*$-subalgebra, then $\mu$ extends by continuity to a state on the full $\mathrm{C}^*$-algebraic free product $*_{k\in K}(C_k*D_k)$ (\cite[Section 3.2]{voiculescu}).
\end{enumerate}   
\end{remark}
The bi-free cumulant function is the main combinatorial tool used in bi-free probability theory and its definition is given below.
\begin{definition}\label{defbnccum}
Let $(A,\varphi)$ be a non-commutative $*$-probability space. The \emph{bi-free cumulant function} is the map
\[
\kappa :\bigcup_{n\in\N}\bigcup_{\chi\in {\{l,r\}}^n}\BNC(\chi)\times A^n\rightarrow \C
\]
defined by
\[
\kappa_{\chi,\tau}(a_1,\ldots,a_n):=\kappa(\tau,a_1,\ldots,a_n) = \mathlarger{\sum}_{\substack{\lambda\in \BNC(\chi)\\
\lambda\leq\tau\\}}\varphi_{\lambda}(a_1,\ldots,a_n)\mu_{\BNC}(\lambda,\tau),
\]
for each $n\in\N,\chi\in {\{l,r\}}^n, \tau\in\BNC(\chi)$ and $a_1,\ldots,a_n\in A$.
\end{definition}
The previous formula is called the \emph{moment-cumulant formula} and an application of M{\"o}bius inversion yields that we must also have that
\[
\varphi(a_1\cdots a_n) = \sum_{\tau\in\BNC(\chi)}\kappa_{\chi,\tau}(a_1,\ldots,a_n).
\]
It is clear that for $n\in\N,\chi\in {\{l,r\}}^n$ and $\tau\in\BNC(\chi)$, the bi-free cumulant map
$\kappa_{\chi,\tau}:A^n\rightarrow\C$
is multilinear. In the special  case when $\tau=1_{\chi}$, we will denote $\kappa_{\chi, 1_{\chi}}$ simply by $\kappa_{\chi}$. Multiplicative properties of the bi-free cumulant function yield that
\[
\kappa_{\chi,\tau}(a_1,\ldots,a_n) = \prod_{V\in\tau}\kappa_{\xv}((a_1,\ldots,a_n)|_V),
\]
for all $n\in\N,\chi\in {\{l,r\}}^n, \tau\in\BNC(\chi)$ and $a_1,\ldots,a_n\in A$ (see \cite{CNS2015-2} for proofs  and discussions on all the aforementioned properties). Note that the result of reading the sequence $(a_1,\ldots,a_n)|_V$ with the indices in the induced $\xv$-order coincides with first reading the sequence $(a_1,\ldots,a_n)$ with the indices in the $\chi$-order and then restricting the resulting sequence to $\sx^{-1}(V)$. For a concrete example, let $a_1, a_2, a_3, a_4, a_5\in A$, let  $V=\{2, 3, 4\}$ and let $\chi\in\{l,r\}^5$ be such that $\chi^{-1}(\{l\})=\{1,4\}$. Then ${(a_1,\ldots, a_5)}|_V = (a_2, a_3, a_4)$ and the result of reading this sequence  in the induced ${\chi}|_V$-order is $(a_4, a_3, a_2)$ (since we are first listing the left entries  in increasing order, followed by the right entries in decreasing order). Also, since  $(a_{s_\chi(1)},\ldots, a_{s_\chi (5)}) = (a_1, a_4, a_5, a_3, a_2)$  and $\sx^{-1}(V) = \{2, 4, 5\}$ (which corresponds to the fact  that we will only be keeping the second, fourth and fifth terms of the aforementioned induced sequence), the coincidence of the two sequences follows.

For $a_1,\ldots,a_n\in A$, we will often make reference to the \textit{bi-free cumulants of the tuple $(a_1,\ldots,a_n)$},   which simply signify the collection
\[
\{\kappa_\chi (c_1,\ldots,c_k) : k\in\N, \chi\in\{l,r\}^k, c_1,\ldots,c_k\in\{a_1,\ldots,a_n\}\},
\]
of all bi-free cumulants with entries in the tuple $(a_1,\ldots, a_n)$. We will also make reference to the \textit{bi-free $*$-cumulants of the tuple $(a_1,\ldots,a_n)$} when talking about the bi-free cumulants of the tuple $(a_1,a_1^*,\ldots, a_n, a_n^*)$. Observe that the moment-cumulant formula implies that for elements $X,Y,Z,W\in A$  the joint $*$-distribution of the pair $(X,Y)$ coincides with the joint $*$-distribution of $(Z,W)$ if and only if all bi-free $*$-cumulants of the pair $(X,Y)$ coincide with the corresponding bi-free $*$-cumulants of the pair $(Z,W)$.

The following theorem displays the combinatorial characterization of bi-free independence.
\begin{theorem}[\cite{CNS2015-2}, Theorem 4.3.1]\label{equivbifree}
Let $(A,\varphi)$ be a non-commutative $*$-probability space and let ${\{(C_k,D_k)\}}_{k\in K}$ be family of pairs of faces in $A$. The following are equivalent:
\begin{enumerate}
[(i)]
\item the family  ${\{(C_k,D_k)\}}_{k\in K}$ is bi-free,
\item for all $n\in\N,\chi\in {\{l,r\}}^n, a_1,\ldots,a_n\in A$ and non-constant map $\epsilon:\{1,\ldots,n\}\rightarrow K$ such that
\[
a_i\in \begin{cases}
       C_{\epsilon(i)}, &\text{if }  \chi (i)=l\\
        D_{\epsilon(i)}, &\text{if }  \chi (i) = r\\   
     \end{cases}\quad (i=1,\ldots,n),
\]
we have that
\[
\kappa_{\chi}(a_1,\ldots,a_n)=0.
\]
\end{enumerate}
\end{theorem}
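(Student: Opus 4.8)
The plan is to prove the equivalence by reducing both directions to the moment-cumulant formula of Definition \ref{defbnccum} together with the multiplicative property $\kappa_{\chi,\tau}(a_1,\ldots,a_n)=\prod_{V\in\tau}\kappa_{\xv}((a_1,\ldots,a_n)|_V)$. Throughout, given a non-constant map $\epsilon:\{1,\ldots,n\}\to K$, I would write $\pi_\epsilon\in\mathcal{P}(n)$ for the partition whose blocks are the level sets of $\epsilon$, so that a block $V$ of a partition $\tau$ is $\epsilon$-monochromatic exactly when $V$ is contained in a block of $\pi_\epsilon$; thus every block of $\tau$ is $\epsilon$-monochromatic precisely when $\tau\leq\pi_\epsilon$. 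Since $\epsilon$ is non-constant, $\pi_\epsilon<1_n$, and in particular $1_\chi=1_n$ is never $\epsilon$-monochromatic.

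First I would treat the direction (ii) $\Rightarrow$ (i), which is the more direct one. Assuming all mixed cumulants $\kappa_\chi$ vanish, the multiplicative property forces $\kappa_{\chi,\tau}(a_1,\ldots,a_n)=0$ whenever some block of $\tau$ fails to be $\epsilon$-monochromatic, since the corresponding factor $\kappa_{\xv}$ is then itself a mixed cumulant on that block. Hence the expansion $\varphi(a_1\cdots a_n)=\sum_{\tau\in\BNC(\chi)}\kappa_{\chi,\tau}(a_1,\ldots,a_n)$ collapses to the sum over those $\tau\in\BNC(\chi)$ with $\tau\leq\pi_\epsilon$. Each surviving summand is a product of cumulants $\kappa_{\xv}$ whose entries lie in a single pair $(C_k,D_k)$, so every such cumulant, and therefore the whole moment, is determined by the individual joint $*$-distributions of the pairs of faces and by the combinatorics of $\BNC(\chi)$ alone, independently of the ambient space. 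Since the joint distribution carried by the free product representation $*_{k\in K}(\mathcal{X}_k,\overset{\circ}{\mathcal{X}}_k,\xi_k)$ of Definition \ref{defbifree} satisfies the very same cumulant vanishing, its moments are computed by the identical universal formula; the two joint distributions therefore coincide, which is exactly the assertion that the family is bi-free.

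The substance lies in the converse (i) $\Rightarrow$ (ii). Since the cumulants are determined by the joint distribution through the moment-cumulant formula, and bi-freeness means the joint distribution is the one carried by the free product representation, it suffices to verify that mixed cumulants vanish in that universal model. I would establish the key combinatorial fact that the vacuum expectation of a product $T_1\cdots T_n$ of left operators $l_{\epsilon(i)}(a_i)$ (for $\chi(i)=l$) and right operators $r_{\epsilon(i)}(a_i)$ (for $\chi(i)=r$) on $*_{k\in K}(\mathcal{X}_k,\overset{\circ}{\mathcal{X}}_k,\xi_k)$ can itself be written as a sum, over the $\tau\in\BNC(\chi)$ with $\tau\leq\pi_\epsilon$, of products of data attached to the individual pairs. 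Granting this, I would argue by induction on $n$: the inductive hypothesis (all lower-order mixed cumulants vanish) identifies those block-data with the cumulants $\kappa_{\xv}$, so that $\varphi(a_1\cdots a_n)=\sum_{\tau\leq\pi_\epsilon}\kappa_{\chi,\tau}(a_1,\ldots,a_n)$. Comparing this with the full moment-cumulant formula and isolating the top term then gives $\kappa_\chi(a_1,\ldots,a_n)=\varphi(a_1\cdots a_n)-\sum_{\tau<1_\chi}\kappa_{\chi,\tau}(a_1,\ldots,a_n)=0$, the cancellation being driven by the fact that $1_\chi$ is not $\epsilon$-monochromatic and hence contributes no term on the right-hand side.

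The main obstacle is precisely this universal vacuum-expectation computation. In the purely free case it is the standard evaluation of alternating products of creation and annihilation operators, but in the bi-free setting left and right operators act from opposite sides of the free product, so the order in which they interact with the vacuum is governed by the permutation $\sx$ and the associated $\chi$-order rather than by the natural order on $\{1,\ldots,n\}$. The heart of the proof is tracking this bookkeeping carefully and showing that the surviving matchings are exactly those that become non-crossing after applying $\sx^{-1}$, that is, exactly the bi-non-crossing partitions; this is where the extra care demanded by the $\chi$-order enters. Once the moment formula is in hand, the Möbius inversion against $\mu_{\BNC}$ and the reduction to $\epsilon$-monochromatic blocks proceed exactly as in the free case.
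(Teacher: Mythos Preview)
The paper does not prove this theorem: it is stated as a citation of \cite[Theorem~4.3.1]{cnstwofaced} and no proof is given in the present manuscript. There is therefore nothing against which to compare your argument.

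That said, your outline follows the standard route used in \cite{cnstwofaced} (and, mutatis mutandis, in the free case of \cite{nicaspeicher}): the direction (ii)$\Rightarrow$(i) is exactly the observation that vanishing of mixed cumulants makes every moment a universal polynomial in the individual distributions, while (i)$\Rightarrow$(ii) is reduced to a computation of vacuum expectations on the free product model, producing a sum over $\tau\in\BNC(\chi)$ with $\tau\leq\pi_\epsilon$. You correctly identify the one genuinely nontrivial step, namely the expansion of the vacuum expectation of a mixed product of left and right operators on $*_{k\in K}(\mathcal{X}_k,\overset{\circ}{\mathcal{X}}_k,\xi_k)$ as a sum indexed by bi-non-crossing partitions; in \cite{cnstwofaced} this is carried out via an explicit analysis of how the left and right regular representations interact with the tensor-word decomposition of the free product, and your sketch stops short of supplying those details. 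The inductive wrap-up you describe, isolating $\kappa_\chi$ by subtracting the lower terms and using that $1_\chi\not\leq\pi_\epsilon$, is correct once that moment formula is in hand.
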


Given that the majority of the  results of this paper involve computations of bi-free cumulants having products of operators as entries,  it is important to be able to express these in terms of cumulants whose entries are obtained by isolating each of the original product terms. To this end we will 
recall from \cite[Section 9]{CNS2015-1} the necessary combinatorial notions.
\begin{definition}\label{notationonly}
Let $n, m \in\N$, $\chi\in\{l,r\}^n$ and fix natural numbers
\[
k(0)=0 < k(1) < \ldots < k(n)=m.
\]
We define $\xh\in\{l,r\}^m$ via $\xh (q) = \chi(p_q)$,
where $p_q$ is the unique element of $\{1,\ldots,n\}$ such that $k(p_q - 1) < q \leq k(p_q)$. With this notation, we define an embedding \begin{align*}
    \BNC (\chi) &\rightarrow \BNC(\xh)\\
    \pi & \mapsto \hat{\pi}
\end{align*}
that is obtained by replacing $i\in\{1,\ldots,n\}$ by the block $\{k(i-1)+1,\ldots, k(i)\}$. Note that $\xh$ is constant on each block $\{k(i-1)+1,\ldots, k(i)\}$ and its value is equal to $\chi(i)$.
\end{definition}
It is easy to see that the map $\pi\mapsto\hat{\pi}$ is an injective order embedding, $1_{\xh} = \widehat{1_\chi}$ and that $\widehat{0_\chi}$ is the partition with blocks
\[
\left\{\{k(i-1)+1,\ldots, k(i)\} : 1\leq i \leq n\right\}.
\]
Moreover, the image of $\BNC(\chi)$ under this map is
\[
\widehat{\BNC(\chi)} = \left[\widehat{0_{\chi}},\widehat{1_{\chi}}\right] = \left[\widehat{0_{\chi}},1_{\xh}\right]\subseteq \BNC( \xh),
\]
and, since this map preserves the order, we obtain that $\mu_{\BNC}(\sigma, \pi) = \mu_{\BNC} (\hat{\sigma}, \hat{\pi})$. The main idea behind introducing the notions  in Definition \ref{notationonly} becomes clear when considering the following theorem, which we will be using frequently  throughout this paper.
\begin{theorem}[Scalar case of \cite{CNS2015-1}, Theorem 9.1.5]\label{openup} Let $(A,\varphi)$ be a non-commutative probability space, $n<m\in\N, \chi\in {\{l,r\}}^n$ and integers
\[
k(0)=0<k(1)<\ldots<k(n)=m.
\]
Also, let $a_1,\ldots,a_m\in A$. Then with $\xh\in {\{l,r\}}^m$  as in Definition \ref{notationonly} we have 
\[
\kappa_{\chi}\left(a_1\cdots a_{k(1)},a_{k(1)+1}\cdots a_{k(2)},\ldots,a_{k(n-1)+1}\cdots a_{k(n)}\right) = \sum_{\substack{\tau\in \BNC(\xh)\\
\tau\vee\oxh = 1_{\xh}\\}}
        \kappa_{\xh,\tau}(a_1,\ldots,a_m).
\]
\end{theorem}
Note that the theorem above deals with bi-free cumulants whose entries are product terms, which in general are of different lengths. Given that the vast majority of the results in the next sections deal with cumulants all of whose entries are product terms of the same length and in order to reduce clutter, we find it convenient to fix the appropriate notation in order to fit our specific needs. Unless otherwise explicitly stated (namely only in Lemma \ref{lemma:bi-R-diagpowerscounterexample} and Proposition \ref{prop:bi-R-diagpowerscounterexample}) the notation given below will be adopted and referenced accordingly.
\begin{notation}\label{notation: products}
Let $n,p\in\N$ and let $\chi\in\{l,r\}^n$. 
\begin{enumerate}[(i)]
    \item We will use $\xh$ to denote the map in $\{l,r\}^{np}$ given in Definition \ref{notationonly} for the case when $k(i)=ip$ for all $i=1,\ldots,n$. For each $1\leq i\leq n$ this is given by
    \[
    \xh\left((i-1)p+1\right)=\ldots=\xh(ip)=l,\text{ if }\chi(i)=l,
    \]
    and
    \[
    \xh\left((i-1)p+1\right)=\ldots=\xh(ip)=r,\text{ if }\chi(i)=r.
    \]
    \item We will similarly denote by $\oxh$ the bi-non-crossing partition obtained in Definition \ref{notationonly} for the case when $k(i)=ip$ for all $i=1,\ldots,n$. This partition is specifically given as
    \[
    \oxh=\left\{\left\{(i-1)p+1,\ldots,ip\right\}:1\leq i\leq p\right\}\in\BNC(\xh).
    \]
    \item If  $(A,\varphi)$ is a non-commutative probability space and $a_1,\ldots,a_n\in A$ are such that each $a_i$ is a product of a $p$-tuple of operators in $A$, i.e.
    \[
    a_i=X_1^{(i)}\cdots X_p^{(i)},\text{ for all }i=1,\ldots,n,
    \]
    we denote by $c_1,\ldots,c_{np}$ the operators in $A$ given such that the sequence $(c_1,\ldots,c_{np})$ corresponds to isolating all product terms appearing in the sequence $(a_1,\ldots,a_n)$. 
    For $1\leq i\leq n$ and $1\leq j\leq p$, these are given by  $c_{(i-1)p+j}=X_j^{(i)}$.
\end{enumerate}
\end{notation}
If we think of $i\in\{1,\ldots,n\}$ as indexing an operator (that is either a left operator if $\chi(i)=l$, or a right operator if $\chi(i)=r$) which is a product of $p$-many terms, then each of those $p$ terms is marked accordingly  by $\xh$ as either left or right and all of these $p$  product terms are grouped together via the indices in a block of $\oxh$. Since the map $\xh\in\{l,r\}^{np}$ is constant on each of the blocks of $\oxh$, it follows that 
    \[    \oxh=\sxh\cdot\oxh=\left\{\left\{\sxh\left((i-1)p+1\right),\ldots,\sxh(ip)\right\}:1\leq i\leq n\right\}.
    \]
    Thus  the blocks of $\oxh$ retain the same structure regardless of  whether we look at the interval $\{1,\ldots,np\}$ with the original order or with the $\xh$-order. Additionally, if $a_1,\ldots,a_n,c_1,\ldots,c_{np}\in (A,\varphi)$ are as in Notation \ref{notation: products}, then by Theorem \ref{openup} we obtain
    \[
\kappa_\chi(a_1,\ldots,a_n)=\sum_{\substack{\tau\in\BNC(\xh)\\ \tau\vee\oxh=1_{\xh}}}\kappa_{\xh,\tau}\left(c_1,\ldots,c_{np}\right)=\sum_{\substack{\tau\in\BNC(\xh)\\ \tau\vee\oxh=1_{\xh}}}\prod_{V\in\tau}\kappa_{\xh|_V}\left((c_1,\ldots,c_{np})|_V\right),
    \]
where the last equality follows by the multiplicative properties of bi-free cumulants. Note that every product term appearing in the bi-free cumulant on the left-hand side of the equation above has the same length (namely $p$). In the majority of the results in the next sections, this is the situation that we will be concerned with, as we will be  dealing with bi-free $*$-cumulants that have entries in pairs  of the form $(X^p,Y^p)$ or $(XZ,WY)$ for operators $X,Y,Z,W$ in some non-commutative $*$-probability  $(A,\varphi)$. For a concrete example in the case $p=2$, if $\chi=\{r,l,l,r\}$, then
\[
\kappa_\chi\left(Y^*W^*, XZ, Z^*X^*, WY\right)=\sum_{\substack{\tau\in\BNC(\xh)\\ \tau\vee\oxh=1_{\xh}}}\kappa_{\xh,\tau}\left(Y^*, W^*, X, Z, Z^*, X^*, W, Y\right),
\]
where $\xh\in\{l,r\}^8$ with $\xh^{-1}(\{l\})=\{3,4,5,6\}$ and $\oxh=\left\{\{1,2\}, \{3,4\}, \{5,6\}, \{7,8\}\right\}$. In all cases, it will be clear from the context (i.e. from the length of the product terms that appear in a given bi-free cumulant) what is the value of $p$ that appears in the statement of Notation \ref{notation: products}. In the next remark we demonstrate some of the  specific instances where  Notation \ref{notation: products} will appear in the next sections.  
\begin{remark}
    Let $(A,\varphi)$ be a non-commutative probability space, let $X,Y,Z,W\in A$, let $n,p\in\N$ and $\chi\in\{l,r\}^n$.
\begin{enumerate}[(i)]
\item If $a_1,\ldots,a_n\in A$ are such that
\[
a_i\in\begin{cases}
    \left\{X^p, (X^*)^p\right\},&\text{if }\chi(i)=l\\
    \left\{Y^p, (Y^*)^p\right\},&\text{if }\chi(i)=r
\end{cases}\quad (i=1,\ldots,n),
\]
then the elements $c_1,\ldots,c_{np}$ as in Notation \ref{notation: products} are for $1\leq i\leq n$ and $1\leq j\leq n$ given by
\[
c_{(i-1)p+j}=\begin{cases}
    X,&\text{if }a_i=X^p\\
    X^*,&\text{if }a_i=(X^*)^p\\
    Y,&\text{if }a_i=Y^p\\
    Y^*,&\text{if }a_i=(Y^*)^p
\end{cases}.
\]
Such notation appears in Theorem \ref{birpowers}. As a concrete example, if $p=3$  then according to the sequence $\left((Y^*)^3,X^3,Y^3, (X^*)^3\right)$ we have that 
\[
(c_1,\ldots,c_{12})=(Y^*,Y^*,Y^*,X,X,X,Y,Y,Y, X^*,X^*,X^*).
\]
Also in this example the map $\xh\in\{l,r\}^{12}$ and the partition $\oxh$ from Notation \ref{notation: products} are given by   $\xh^{-1}(\{l\})=\{4,5,6,10,11,12\}$ and 
\[
\oxh=\{\{1,2,3\},\{4,5,6\},\{7,8,9\},\{10,11,12\}\}.
\] 
\item Suppose that $a_1,\ldots,a_n\in A$ are such that 
\[
a_i\in\begin{cases}
    \left\{XZ, Z^* X^*\right\},&\text{if }\chi(i)=l\\
    \left\{WY, Y^* W^*\right\},&\text{if }\chi(i)=r
\end{cases}\quad (i=1,\ldots,n).
\]
In this case the $a_i$'s should be thought of as entries of a bi-free $*$-cumulant of the pair $(XZ,WY)$, so that here $p=2$ (observe that the order of multiplication is reversed for the right operators). Then the elements $c_1,\ldots,c_{2n}\in A$ as in Notation \ref{notation: products} are given by
\[
c_{2i-1}=\begin{cases}
    X,&\text{if }a_i=XZ\\
    Z^*,&\text{if }a_i=Z^* X^*\\
    W,&\text{if }a_i=WY\\
    Y^*,&\text{if }a_i=Y^*W^*
\end{cases}\text{ and } c_{2i}=\begin{cases}
    Z,&\text{if }a_i=XZ\\
    X^*,&\text{if }a_i=Z^*X^*\\
    Y,&\text{if }a_i=WY\\
    W^*,&\text{if }a_i=Y^*W^*
\end{cases}\quad (i=1,\ldots,n).
\]
Such notation appears in Theorem \ref{prodbir-bifree}, Proposition \ref{prod-bi-even} and Lemma \ref{lemmainv1}. We mention in this case that the associated map $\xh\in\{l,r\}^{2n}$ as in Notation \ref{notation: products} will be such that $\xh(2i-1)=\xh(2i)=\chi(i)$ for all $i=1,\ldots,n$, while  $\oxh=\left\{\left\{2i-1,2i\right\}:1\leq i\leq n\right\}$. Observe for $1\leq i\leq n$ that if $a_{\sx(i)}=XZ$, then
\[
c_{\sxh(2i-1)}=X\text{ and }c_{\sxh(2i)}=Z,
\]
with a similar situation occurring if $a_{\sx(i)}=Z^*X^*$, since this corresponds to a left operator. On the other hand, if $a_{\sx(i)}=WY$, then
\[
c_{\sxh(2i-1)}=Y\text{ and }c_{\sxh(2i)}=W,
\]
with a similar situation occurring  when $a_{\sx(i)}=Y^*W^*$, since this corresponds to a right operator. Note in the latter case that the right operators must appear reversed in the $\xh$-order.
\end{enumerate}
\end{remark}
Note that (with the conventions of Notation \ref{notation: products}) if $\tau\in\BNC(\xh)$ satisfies $\tau\vee\oxh=1_{\xh}$, then the only $\xh$-interval that is simultaneously a union of blocks of $\tau$ and a union of  blocks of $\oxh$ is all of $\{1,\ldots,np\}$. This has certain implications on the blocks of such a partition $\tau$ which we record in the next lemma. The properties listed below may be thought of as  ``reductions'' that allow one to identify the partitions that contribute to a given sum of bi-free cumulants, in combination with Theorem \ref{openup}.
\begin{lemma}\label{lemma: no block coupling}
Let $n,p\in\N$, $\chi\in\{l,r\}^n$ and let $\xh\in\{l,r\}^{np}$ and $\oxh\in\BNC(\xh)$ be as in Notation \ref{notation: products}. Also, consider $\tau\in\BNC(\xh)$ with $\tau\vee\oxh=1_{\xh}$ and let $V$ be a block of $\tau$.
\begin{enumerate}[(i)]
    \item If there exist  $1\leq  i\leq j\leq n$ such that
    \[
    \sxh\left((i-1)p+1\right)=\min_{\prec_{\xh}}V\text{ and }\sxh(jp)=\max_{\prec_{\xh}}V,
    \]
    then necessarily $i=1$ and $j=n$.  In particular, whenever the $\xh$-interval formed between the minimum and maximum elements (in the $\xh$-order) of any block of $\tau$ is a union of blocks of $\oxh$,  then this $\xh$-interval necessarily equals all of $\{1,\ldots,np\}$,
    \item for all $1\leq i < j\leq n$, it cannot be the case that the indices $\sxh(ip)$ and $\sxh(jp+1)$ are consecutive indices of $V$ in the $\xh$-order (i.e. it cannot be the case that  for all $k\in V\setminus\left\{\sxh(ip),\sxh(jp+1)\right\}$ we have that either $k\prec_{\xh}\hspace{2pt}\sxh(ip)$ or $\sxh(jp+1)\prec_{\xh} \hspace{2pt}k$). In particular,  the $\xh$-interval between consecutive indices of any block of $\tau$ (in the $\xh$-order) cannot be a union of blocks of $\oxh$.
\end{enumerate}
\end{lemma}
\begin{proof}
For clause $(i)$, arguing by way of contradiction, suppose there exist $1\leq i\leq j\leq n$ with either $1<i$ or $j<n$ such that
\[
\sxh\left((i-1)p+1\right)=\min_{\prec_{\xh}}V\text{ and }\sxh(jp)=\max_{\prec_{\xh}}V.
\]
Note this implies that at least one of the sets
\[
\left\{\sxh(1),\ldots,\sxh\left((i-1)p\right)\right\}\text{ or }\left\{\sxh(jp+1),\ldots,\sxh(np)\right\},
\]
is non-empty. This situation is depicted in the following diagram, where the indices of the interval $\{1,\ldots,np\}$ are in the $\xh$-order.   \begin{equation*}
\begin{tikzpicture}[baseline]
			\draw[thick,dashed] (-2,0) -- (12.9,0);
\filldraw (-1.8,-0.3) circle (0.02) node[anchor=north] {};
           \filldraw (-1.3,-0.3) circle (0.02) node[anchor=north] {};
            \filldraw (-0.8,-0.3) circle (0.02) node[anchor=north] {};
             \filldraw (0.8,0) circle (0.06) node[anchor=north] {$\sxh\left((i-1)p+1\right)$};
               \filldraw (2.3,-0.3) circle (0.02) node[anchor=north] {};
               \filldraw (2.8,-0.3) circle (0.02) node[anchor=north] {};
               \filldraw (3.3,-0.3) circle (0.02) node[anchor=north] {};
               \filldraw (4.1,0) circle (0.06) node[anchor=north] {$\sxh\left(ip\right)$};
               \filldraw (4.9,-0.3) circle (0.02) node[anchor=north] {};
               \filldraw (5.4,-0.3) circle (0.02) node[anchor=north] {};
               \filldraw (5.9,-0.3) circle (0.02) node[anchor=north] {};
               \filldraw (7.5,0) circle (0.06) node[anchor=north] {$\sxh\left((j-1)p+1\right)$};
               \filldraw (9.1,-0.3) circle (0.02) node[anchor=north] {};
               \filldraw (9.6,-0.3) circle (0.02) node[anchor=north] {};
               \filldraw (10.1,-0.3) circle (0.02) node[anchor=north] {};
               \filldraw (10.9,0) circle (0.06) node[anchor=north] {$\sxh\left(jp\right)$};
               \filldraw (11.7,-0.3) circle (0.02) node[anchor=north] {};
               \filldraw (12.2,-0.3) circle (0.02) node[anchor=north] {};
               \filldraw (12.7,-0.3) circle (0.02) node[anchor=north] {};
            \draw[-latex,line width=1pt] (0.8,-1.3)--(0.8,-0.5);
  \node[draw] at (0.8,-1.6) {${\min}_{{\prec}_{\xh}}V$};
   \draw[-latex,line width=1pt] (10.9,-1.3)--(10.9,-0.5);
  \node[draw] at (10.9,-1.6) {${\max}_{{\prec}_{\xh}}V$};
\draw[thick, black] (0.8,-2.4) -- (0.8,-2);
\draw[thick, black] (10.9,-2.4) -- (10.9,-2);
 \draw[-latex,line width=1pt] (2.8,-2.2)--(0.8,-2.2);
  \draw[-latex,line width=1pt] (8.9,-2.2)--(10.9,-2.2);
  \draw[thick,dashed, line width=1pt] (2.8,-2.2) -- (5.5,-2.2);
  \draw[thick,dashed, line width=1pt] (8.9,-2.2) -- (6.2,-2.2);
 \node[draw] at (5.85,-2.2) {$V$};
 \draw [thick, black,decorate,decoration={brace,amplitude=10pt,mirror},xshift=0.4pt,yshift=-0.4pt](0.75,-2.5) -- (10.93,-2.5) node[black,midway,yshift=-0.6cm] {\footnotesize union of blocks of $\tau$ and union of blocks of $\oxh$};
\draw[thick, black] (0.8,0) -- (0.8 ,1) -- (10.9,1) -- (10.9,0);	
			\end{tikzpicture}
\end{equation*}
Define 
\[
\tilde{V}=\left\{\sxh(k):(i-1)p+1\leq k\leq jp\right\},
\]
and first note that $\tilde{V}$ is clearly a union of blocks of $\oxh$. If $V'\in\tau$ is another block of $\tau$ such that $V'\cap\tilde{V}\neq\emptyset$, then we must necessarily have that $V'\subseteq\tilde{V}$, since the partition $\sxh^{-1}\cdot\tau$ is non-crossing. As a result, $\tilde{V}$ must be written as a union of blocks of $\tau$. If we define the partition
\[
\lambda = \left\{\tilde{V}, {\left(\tilde{V}\right)}{\vphantom{\tilde{V}}}^\mathsf{c}\right\},
\]
then it is immediate that $\tau,\oxh\leq \lambda\lneq 1_{\xh}$ (note that both blocks of $\lambda$ are non-empty) and also $\lambda$ is bi-non-crossing, since
\[
\sxh^{-1}\cdot\lambda=\left\{\left\{(i-1)p+1,\ldots,jp\right\}, \left\{1,\ldots,(i-1)p\right\}\cup\left\{jp+1,\ldots,np\right\}\right\}\in\NC(np).
\]
This shows that the relation $\tau\vee\oxh=1_{\xh}$ cannot be satisfied. For clause $(ii)$ we similarly assume that $\sxh(ip)$ and $\sxh(jp+1)$ are consecutive indices of $V$ in the $\xh$-order, for some $1\leq i<j\leq n$. This situation  is depicted in the next diagram, where again the indices of the interval $\{1,\ldots,np\}$ are in the $\xh$-order.  
\begin{equation*}
\begin{tikzpicture}[baseline]
			\draw[thick,dashed] (2,0) -- (13,0);
       \filldraw (2.2,-0.3) circle (0.02) node[anchor=north] {};
        \filldraw (2.7,-0.3) circle (0.02) node[anchor=north] {};
        \filldraw (3.2,-0.3) circle (0.02) node[anchor=north] {};
        \filldraw (4,0) circle (0.06) node[anchor=north] {$\sxh(ip)$};
        \filldraw (5.6,0) circle (0.06) node[anchor=north] {$\sxh(ip+1)$};
         \filldraw (6.8,-0.3) circle (0.02) node[anchor=north] {};
          \filldraw (7.4,-0.3) circle (0.02) node[anchor=north] {};
           \filldraw (7.9,-0.3) circle (0.02) node[anchor=north] {};
           \filldraw (8.9,0) circle (0.06) node[anchor=north] {$\sxh(jp)$};
            \filldraw (10.5,0) circle (0.06) node[anchor=north] {$\sxh(jp+1)$};
        \filldraw (11.7,-0.3) circle (0.02) node[anchor=north] {};
         \filldraw (12.2,-0.3) circle (0.02) node[anchor=north] {};
          \filldraw (12.7,-0.3) circle (0.02) node[anchor=north] {};

\draw [thick, black,decorate,decoration={brace,amplitude=10pt},xshift=0.4pt,yshift=-0.4pt](5.6,0.2) -- (8.9,0.2) node[black,above=32pt, midway,yshift=-0.8cm,align=left] {\footnotesize union of blocks of $\tau$ and  \\ \footnotesize union of blocks of $\oxh$};

\draw[thick, black] (4,0) -- (4,1.6) -- (10.5,1.6) -- (10.5,0);
\draw[thick, black] (10.5,1.6) -- (11,1.6);
\draw[thick,dashed] (11,1.6) -- (11.9,1.6);
\draw[thick, black] (4,1.6) -- (3.5,1.6);
\draw[thick,dashed] (3.5,1.6) -- (2.6,1.6);
	
			\end{tikzpicture}
\end{equation*}
We define 
\[
\tilde{V}=\left\{\sxh(k):ip+1\leq k\leq jp\right\},
\]
and as before we note that $\tilde{V}$ is both  a union of blocks of $\oxh$ and a union of blocks of $\tau$. Thus, for the partition 
\[
\lambda = \left\{\tilde{V}, {\left(\tilde{V}\right)}{\vphantom{\tilde{V}}}^\mathsf{c}\right\},
\]
it follows that   $\tau,\oxh\leq \lambda\lneq 1_{\xh}$ (note that both blocks of $\lambda$ are non-empty, since  $i<j$). Also $\lambda$ is bi-non-crossing, since
\[
\sxh^{-1}\cdot\lambda=\left\{\left\{ip+1,\ldots,jp\right\}, \left\{1,\ldots,ip-1\right\}\cup\left\{jp+1,\ldots,np\right\}\right\}\in\NC(np).
\]
Therefore, the relation $\tau\vee\oxh=1_{\xh}$ once again cannot be satisfied.
\end{proof}
The previous lemma, along with Proposition \ref{prop: reduction properties}, comprise the central combinatorial observations necessary for a large part of the remaining sections.
Observe  in the case $p=2$ that clause $(i)$ of the previous lemma translates to the fact that the minimum and maximum indices (in the $\xh$-order) of any block $V$ of a partition $\tau\in\BNC(\xh)$ such that $\tau\vee\oxh=1_{\xh}$ cannot be of the form $\sxh(k)$ and $\sxh(m)$ where $k$ is odd and $m$ is even, unless $k=1$ and $m=2n$. Similarly,  clause $(ii)$ implies that if $\sxh(k)$ and $\sxh(m)$ are consecutive indices of $V$ in the $\xh$-order, then it cannot be the case that $k$ is even and $m$ is odd, unless $m=k+1$.

As a follow-up  we state the next result, which allows us to characterize (in the case $p=2$) the partitions $\tau$ all of whose blocks have even cardinality and such that $\tau\vee\oxh=1_{\xh}$. This result will be invoked frequently in the next sections in the computation of bi-free cumulants such that the $*$-moments of odd order of all of their entries are equal to zero (see Propositions \ref{prodbir-bir}, \ref{prod-bi-even} and Lemma \ref{lemmainv1}).
\begin{proposition}\label{evenblocks}
Let  $n\in\N$ and let   $\chi\in\{l,r\}^{2n}$ and $\oxh$ be given as in Notation  \ref{notation: products}, so that
\[
\xh(2i-1)=\xh(2i)=\chi(i)\text{ for all }i=1,\ldots,n\text{ and }\oxh=\left\{\left\{2i-1,2i\right\}:i=1,\ldots,n\right\}\in\BNC(\xh).
\]
For a bi-non-crossing partition $\tau\in \BNC(\xh)$ the following are equivalent:
\begin{enumerate}[(i)]
\item $\tau\vee\oxh = 1_{\xh}$ and every block of $\tau$ contains an even number of elements,
\item  $\sxh(1)\sim_\tau \sxh(2n)$  and  $\sxh(2i) \sim_\tau\sxh(2i+1)$ for every $i=1,\ldots,n-1 .       $
\end{enumerate}
\end{proposition}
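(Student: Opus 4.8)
The plan is to transport the whole statement through the relabelling $\sxh$ to the lattice $\NC(2n)$, where it turns into a transparent assertion about Kreweras complements, the only genuinely combinatorial input being a parity argument at the very end.

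Write $s:=\sxh$ and $\pi:=s^{-1}\cdot\tau\in\NC(2n)$. Since $\xh(2i-1)=\xh(2i)$ for all $i$, the remark following Theorem \ref{openup} (applied with $t=2$) shows that $\oxh$ is invariant under $s$, so $s^{-1}\cdot\oxh=\oxh$ and $\oxh\in\BNC(\xh)$. The map $\tau\mapsto s^{-1}\cdot\tau$ is an order-isomorphism of $\BNC(\xh)$ onto $\NC(2n)$, hence a lattice isomorphism sending $1_{\xh}$ to $1_{2n}$ and the $\BNC(\xh)$-join to the $\NC(2n)$-join; moreover it preserves block sizes and satisfies $s(k)\sim_\tau s(k')\iff k\sim_\pi k'$. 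Thus (i) is equivalent to ``$\pi\vee\oxh=1_{2n}$ in $\NC(2n)$ and all blocks of $\pi$ are even'' and (ii) is equivalent to ``$\pi\ge\oxh'$'', where $\oxh':=\{\{1,2n\}\}\cup\{\{2i,2i+1\}:1\le i\le n-1\}$, which is readily seen to lie in $\NC(2n)$. So it suffices to prove, for $\pi\in\NC(2n)$, that $\pi\ge\oxh'$ holds iff $\pi\vee\oxh=1_{2n}$ and every block of $\pi$ is even. If $\pi\ge\oxh'$ this is immediate: each block of $\pi$ is a union of two-element blocks of $\oxh'$, hence even, and $\pi\vee\oxh\ge\oxh'\vee\oxh$, which equals $1_{2n}$ because the blocks of $\oxh'$ and of $\oxh$ together link $1,2,\dots,2n$ into a single $2n$-cycle (so their join is already $1_{2n}$ in $\mathcal P(2n)$, a fortiori in $\NC(2n)$).

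For the converse I would use the Kreweras complement $K=K_{\NC}$ on $\NC(2n)$, written on the interlaced points $1\,\overline{1}\,2\,\overline{2}\cdots 2n\,\overline{2n}$. Being an order-reversing bijection, $K$ is a lattice anti-isomorphism, so $\pi\vee\oxh=1_{2n}\iff K(\pi)\wedge K(\oxh)=0_{2n}$ and $\pi\ge\oxh'\iff K(\pi)\le K(\oxh')$. A direct look at the interlaced picture gives
\[
K(\oxh)=\bigl\{\{\overline{2i-1}\}:1\le i\le n\bigr\}\cup\bigl\{\{\overline{2},\overline{4},\dots,\overline{2n}\}\bigr\},\qquad K(\oxh')=\bigl\{\{\overline{1},\overline{3},\dots,\overline{2n-1}\}\bigr\}\cup\bigl\{\{\overline{2i}\}:1\le i\le n\bigr\}
\]
(for $\oxh$ each $\overline{2i-1}$ is trapped inside the arc $\{2i-1,2i\}$ while the $\overline{2i}$ share the common outer region; for $\oxh'$ the two parities switch roles, with $\{1,2n\}$ acting as the outer block). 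Hence $K(\pi)\wedge K(\oxh)=0_{2n}$ says precisely that no block of $K(\pi)$ contains two of the points $\overline{k}$ with $k$ even, and $K(\pi)\le K(\oxh')$ says precisely that every block of $K(\pi)$ is either contained in $\{\overline{k}:k\text{ odd}\}$ or is a single point $\overline{2i}$. Granting the former, the latter follows once we rule out a block $C$ of $K(\pi)$ containing both a point $\overline{k}$ with $k$ odd and a point $\overline{k}$ with $k$ even.

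This last exclusion is the crux, and it is where the even-block hypothesis on $\pi$ enters. If such $C$ existed, pick two elements $\overline{a},\overline{b}$ that are consecutive in $C$ and of opposite parity (possible since the indices appearing in $C$ are not all of one parity), say $a<b$, so $b-a$ is odd. Because $\pi\cup K(\pi)$ is non-crossing on the $4n$ interlaced points and $\overline{a},\overline{b}$ are consecutive within their block, the points lying strictly between $\overline{a}$ and $\overline{b}$ form a union of complete blocks of $\pi\cup K(\pi)$; the unbarred ones among those points are exactly $a+1,a+2,\dots,b$, and each lies in a complete block contained among them, necessarily a block of $\pi$. Thus $\{a+1,\dots,b\}$ is a disjoint union of blocks of $\pi$, forcing $b-a$ to be even --- a contradiction. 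The main obstacle is precisely this step: producing the odd-cardinality union of $\pi$-blocks, which rests on the elementary fact that between two consecutive elements of a block of a non-crossing partition one always finds a union of complete blocks. The remaining points needing a little care are the verifications that $\sxh$ fixes $\oxh$ and intertwines the lattice and Kreweras structures, and the bookkeeping that computes $K(\oxh)$ and $K(\oxh')$.
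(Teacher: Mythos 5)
Your proof is correct, but it is organized quite differently from the one in the paper, so a comparison is worthwhile. The paper works directly in $\BNC(\xh)$: for $(i)\Rightarrow(ii)$ it takes the block $V$ containing $\sxh(1)$, shows by a parity count (the points of $\{1,\dots,2n\}$ trapped between consecutive elements of a block of a non-crossing partition form a union of complete blocks, so an odd-cardinality gap is impossible when all blocks are even) that the $\prec_{\xh}$-maximum of $V$ sits at an even position $2m$, and then rules out $m<n$ by exhibiting an explicit two-block partition $\lambda$ with $\tau,\oxh\leq\lambda\lneq 1_{\xh}$; this construction is then repeated for each interior adjacency $\sxh(2i)\sim_\tau\sxh(2i+1)$. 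You instead transport everything to $\NC(2n)$, observe that condition $(ii)$ is exactly $\pi\geq\oxh'$ for the explicit non-crossing partition $\oxh'=\{\{1,2n\}\}\cup\{\{2i,2i+1\}\}$, and convert the join condition $\pi\vee\oxh=1_{2n}$ into the meet condition $K_{\NC}(\pi)\wedge K_{\NC}(\oxh)=0_{2n}$ via the Kreweras anti-isomorphism (i.e.\ via the content of Lemma \ref{kncproperties}); your explicit computations of $K_{\NC}(\oxh)$ and $K_{\NC}(\oxh')$ check out, and the single remaining parity argument is the same ``gap between consecutive elements of a block is a union of complete blocks'' fact the paper uses, applied to $\pi\cup K_{\NC}(\pi)$ on the interlaced points. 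The trade-off: the paper's argument is self-contained and never leaves the $\BNC$ picture, at the cost of constructing the separating partitions $\lambda$ several times over; yours is shorter and conceptually cleaner, replaces all of those constructions by one application of complementation, and makes visible why the two conditions are dual to each other, at the cost of the bookkeeping needed to justify the transport under $\sxh$ and to compute the two Kreweras complements. One small point worth making explicit in a final write-up is that the meet of two non-crossing partitions agrees with their meet in $\mathcal{P}(2n)$ (so that $K_{\NC}(\pi)\wedge K_{\NC}(\oxh)=0_{2n}$ really does just say that no block of $K_{\NC}(\pi)$ meets a block of $K_{\NC}(\oxh)$ in two points), but this is immediate and does not affect the validity of the argument.
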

\begin{proof}
If $\tau\in\BNC(\xh)$ satisfies property $(ii)$ above, then clearly every block of $\tau$ contains an even number of elements. Let $\lambda\in\BNC(\xh)$ be such that $\tau,\oxh\leq\lambda$. For $1\leq i\leq n-1$, we have
\[
\sxh(2i-1)\sim_{\oxh}\sxh(2i)\text{ and }\sxh(2i)\sim_\tau\sxh(2i+1),
\]
therefore as $\tau,\oxh\leq\lambda$ we obtain
\[
\sxh(2i-1)\sim_\lambda\sxh(2i)\sim_\lambda\sxh(2i+1),
\]
which, along with the fact that $\sxh(1)\sim_\lambda\sxh(2n)$, shows that $\sxh(k)\sim_\lambda \sxh(k+1)$ for all $1\leq k\leq 2n-1$. Thus $\lambda=1_{\xh}$ and this proves the implication $(ii)\implies (i)$.

For the converse, let $\tau\in \BNC(\chi)$ be such that $\tau\vee\oxh=1_{\chi}$ and every block of $\tau$ contains an even number of elements. We start with the following observation: let $V\in\tau$ and let $1\leq q < q'\leq 2n$ be such that $\sxh(q)=\min_{\prec_{\xh}}V$ and $\sxh(q')=\max_{\prec_{\xh}} V$ (equivalently, $q=\min_<\sxh^{-1}(V)$ and $q'=\max_<\sxh^{-1}(V)$). We claim that it must be the case that one of $q, q'$ is  an even number and the other is  odd. Indeed, arguing by way of contradiction, suppose that both $q$ and $q'$ are odd, so that there exist $1\leq m < k\leq n$ such that $q=2m+1$ and $q'=2k+1$. This situation is depicted in the following diagram.
\begin{equation*}
\begin{tikzpicture}[baseline]
			\draw[thick,dashed] (2,0) -- (13,0);
   \filldraw (2.1,0) circle (0.06) node[anchor=north] {$\sxh(1)$};
     \filldraw (3.1,0) circle (0.06) node[anchor=north] {$\sxh(2)$};
      
        \filldraw (3.7,-0.3) circle (0.02) node[anchor=north] {};
        \filldraw (4.3,-0.3) circle (0.02) node[anchor=north] {};
        \filldraw (4.9,-0.3) circle (0.02) node[anchor=north] {};

        \filldraw (6.1,0) circle (0.06) node[anchor=north] {$\sxh(2m+1)$};
         \filldraw (7.2,-0.3) circle (0.02) node[anchor=north] {};
         \filldraw (7.8,-0.3) circle (0.02) node[anchor=north] {};
         \filldraw (8.4,-0.3) circle (0.02) node[anchor=north] {};
         \filldraw (9,-0.3) circle (0.02) node[anchor=north] {};
          \filldraw (10.1,0) circle (0.06) node[anchor=north] {$\sxh(2k+1)$};
                \filldraw (11.3,-0.3) circle (0.02) node[anchor=north] {};
                   \filldraw (11.9,-0.3) circle (0.02) node[anchor=north] {};
                   \filldraw (12.9,0) circle (0.06) node[anchor=north] {$\sxh(2n)$};

\draw [thick, black,decorate,decoration={brace,amplitude=10pt},xshift=0.4pt,yshift=-0.4pt](6.15,0.2) -- (10.05,0.2) node[black,above=32pt, midway,yshift=-0.6cm,align=left] {\footnotesize union of blocks of $\tau$ \\ \footnotesize of odd cardinality};

 \draw[-latex,line width=1pt] (6.1,-1.3)--(6.1,-0.5);
  \node[draw] at (6.1,-1.6) {${\min}_{\prec_{\xh}}V$};
   \draw[-latex,line width=1pt] (10.1,-1.3)--(10.1,-0.5);
  \node[draw] at (10.1,-1.6) {${\max}_{\prec_{\xh}}V$};
\draw[thick, black] (6.1,-2) -- (6.1,-2.4);
\draw[thick, black] (10.1,-2) -- (10.1,-2.4);
 \draw[-latex,line width=1pt] (6.8,-2.2)--(6.1,-2.2);
  \draw[-latex,line width=1pt] (9.2,-2.2)--(10.1,-2.2);
  \draw[thick,dashed, line width=1pt] (6.8,-2.2) -- (7.8,-2.2);
  \draw[thick,dashed, line width=1pt] (8.4,-2.2) -- (9.2,-2.2);
 \node[draw] at (8.1,-2.2) {$V$};             
\draw[thick, black] (6.1,0) -- (6.1,1.8) -- (10.1,1.8) -- (10.1,0);	
	
			\end{tikzpicture}
\end{equation*}
The $\xh$-interval  $\{\sxh(i) : 2m+1\leq i\leq 2k+1\}$ must be written as a union of blocks of $\tau$ (since $\tau$ is bi-non-crossing) and has clearly odd cardinality. Thus $\tau$ must contain at least one block with an odd number of elements, contradicting our initial assumption. In a similar way we show that  $q, q'$ cannot both be even, hence proving the aforementioned claim.

If $V\in\tau$ is such that $\sxh(1)\in V$, then clearly $\sxh(1)=\min_{\prec_{\xh}} V$, thus if  $\sxh(q)=\max_{\prec_{\xh}}V$, then $q$ must be even and by Lemma \ref{lemma: no block coupling} we necessarily have $q=2n$, which shows that $\sxh(1)\sim_\tau\sxh(2n)$. Next,  let $i\in\{1,\ldots,n-1\}$ and $V\in\tau$ such that $\sxh(2i)\in V$. Observe that it cannot be that case that $\sxh(2i)=\max_{\prec_{\xh}}V$, since then there would exist an odd index $q<2i$ with $\sxh(q)=\min_{\prec_{\xh}}V$, which would contradict the conclusion of Lemma \ref{lemma: no block coupling}. As as result, there exists an index in $V$ that is greater than $\sxh(2i)$ (in the $\xh$-order), i.e. there exists  $q>2i$  such that $\sxh(q)\in V$. We may evidently assume that $\sxh(2i)$ and $\sxh(q)$ are consecutive indices of $V$ in the $\xh$-order. If the $\xh$-interval $\left\{\sxh(j): 2i+1\leq j\leq q-1\right\}$ between the consecutive indices $\sxh(i)$ and $\sxh(q)$ of $V$ is non-empty, then  it has to be written as a union of blocks of $\tau$ (as $\tau$ is bi-non-crossing) and hence has even cardinality. Therefore $q$ must be odd and we may write $q=2m+1$ for some $m\geq i$. However, if $m>i$, then the aforementioned $\xh$-interval is non-empty and is written both as a union of blocks of $\tau$ and $\oxh$, as depicted in the next diagram (with the indices in the $\xh$-order).
\begin{equation*}
\begin{tikzpicture}[baseline]
			\draw[thick,dashed] (2,0) -- (13,0);
   \filldraw (2.1,0) circle (0.06) node[anchor=north] {$\sxh(1)$};
 
       \filldraw (2.7,-0.3) circle (0.02) node[anchor=north] {};
        \filldraw (3.1,-0.3) circle (0.02) node[anchor=north] {};
        \filldraw (3.5,-0.3) circle (0.02) node[anchor=north] {};

        \filldraw (4.2,0) circle (0.06) node[anchor=north] {$\sxh(2i)$};
         \filldraw (5.7,0) circle (0.06) node[anchor=north] {$\sxh(2i+1)$};
         \filldraw (6.8,-0.3) circle (0.02) node[anchor=north] {};
         \filldraw (7.2,-0.3) circle (0.02) node[anchor=north] {};
         \filldraw (7.6,-0.3) circle (0.02) node[anchor=north] {};
         \filldraw (8.5,0) circle (0.06) node[anchor=north] {$\sxh(2m)$};
         \filldraw (10.2,0) circle (0.06) node[anchor=north] {$\sxh(2m+1)$};
         \filldraw (11.3,-0.3) circle (0.02) node[anchor=north] {};
          \filldraw (11.7,-0.3) circle (0.02) node[anchor=north] {};
          \filldraw (12.1,-0.3) circle (0.02) node[anchor=north] {};
          
                   \filldraw (12.9,0) circle (0.06) node[anchor=north] {$\sxh(2n)$};

\draw [thick, black,decorate,decoration={brace,amplitude=10pt},xshift=0.4pt,yshift=-0.4pt](5.7,0.2) -- (8.5,0.2) node[black,above=32pt, midway,yshift=-0.8cm,align=left] {\footnotesize union of blocks of $\tau$ and  \\ \footnotesize union of blocks of $\oxh$};

\draw[thick, black] (4.2,0) -- (4.2,1.6) -- (10.2,1.6) -- (10.2,0);
\draw[thick, black] (10.2,1.6) -- (10.7,1.6);
\draw[thick,dashed] (10.7,1.6) -- (11.6,1.6);
\draw[thick, black] (4.2,1.6) -- (3.7,1.6);
\draw[thick,dashed] (3.7,1.6) -- (2.8,1.6);
	
			\end{tikzpicture}
\end{equation*}
This again contradicts Lemma \ref{lemma: no block coupling}, therefore we have $m=i$, so that $\sxh(2i)\sim_\tau\sxh(2i+1)$, as desired.
\end{proof}

\subsection{Bi-R-Diagonal Pairs of Operators}\label{sctn:bihaar}
In the context of free probability, R-diagonal operators are characterized by having all of their free $*$-cumulants that are either of odd order, or have entries that are not alternating in $*$-terms and non-$*$-terms equal to zero (see \cite[Lecture 15]{nicaspeicher} for an exposition of the theory of R-diagonal operators in free probability). Specifically, they are defined as follows:
\begin{definition}\label{def:R-diagonal}
Let $(A,\varphi)$ be a non-commutative $*$-probability space. An element $a\in A$ is called \textit{R-diagonal} if for all $n\in\N$ and $a_1,\ldots, a_n\in\{a,a^*\}$ we have that $\kappa_n(a_1,\ldots,a_n)=0$ if $n$ is odd, or if $n$ is even and the sequence $(a_1,\ldots, a_n)$ is \textit{not} of the form
\[
(a,a^*,\ldots,a,a^*)\text{ or }(a^*,a,\ldots,a^*,a),
\]
i.e. if the sequence $(a_1,\ldots,a_n)$ is not alternating in $*$-terms and non-$*$-terms.
\end{definition}
With the previous preliminaries on bi-free cumulants in hand and by adopting the combinatorial approach in the bi-free setting, we will now give the definition of bi-R-diagonal pairs of operators, which will be the central focus of this paper. This definition was first proposed as the correct bi-free generalization of R-diagonal elements in \cite[Section 4]{S2016-2}, but was only used to yield examples of R-cyclic pairs of matrices (see Proposition \ref{rcyclic}).
\begin{definition}\label{birdiagdef}
Let $(A,\varphi)$ be a non-commutative $*$-probability space and $X,Y\in A$. We say that the pair  $(X,Y)$ is \textit{bi-R-diagonal} if for every $n\in\N$, $\chi\in \{l,r\}^n$ and $a_1,\ldots,a_n\in A$ such that 
\[
a_i\in \begin{cases}
       \{X,X^*\}, &\text{if }  \chi (i)=l\\
        \{Y,Y^*\}, &\text{if }  \chi (i) = r\\   
     \end{cases}\quad (i=1,\ldots,n),
\]
we have that:
\begin{enumerate}[(i)]
\item $\kappa_\chi(a_1,\ldots,a_n)=0$, if n is odd
\item ${
\kappa}
_\chi (a_1,\ldots,a_n)=0$, if n is even and the sequence  $(a_{\sx (1)},\ldots, a_{\sx (n)})$ is \emph{not} in one of the following forms:
\begin{enumerate}[(a)]
\item $(Z,Z^*,\ldots,Z,Z^*)$, with $Z\in\{X,X^*,Y,Y^*\},$
\item $(X,X^*,\ldots,X,X^*,Y,Y^*,\ldots,Y,Y^*)$,
\item $(X^*,X,\ldots,X^*,X,Y^*,Y,\ldots,Y^*,Y)$,
\item $(X,X^*,\ldots,X,X^*,X,Y^*,Y,\ldots,Y^*,Y,Y^*)$,
\item $(X^*,X,\ldots,X^*,X,X^*,Y,Y^*,\ldots,Y,Y^*,Y)$,
\end{enumerate}
i.e. whenever the sequence $(a_1,\ldots,a_n)$ is not alternating in $*$-terms and non $*$-terms when read with the indices in the $\chi$-order, with any number of $X$-terms followed by any number of $Y$-terms.
\end{enumerate}
\end{definition}
It is clear from the definition that if the map $\chi$ is constant, then bi-free cumulants reduce to free cumulants and all free cumulants with entries in either $\{X,X^*\}$ (if the map $\chi$  yields the constant value ``$l$'') or $\{Y,Y^*\}$ (if the map $\chi$ yields the constant value ``$r$'') that are of odd order or are not \emph{$*$-alternating} (i.e. are not  alternating in $*$-terms and non-$*$-terms) are equal to zero. In particular, if $(X,Y)$ is a bi-R-diagonal pair, then both $X$ and $Y$ are  R-diagonal operators. Also, it is immediate from the moment-cumulant formula that all joint $*$-moments of odd order of a bi-R-diagonal pair are equal to zero, i.e. if the pair $(X,Y)$ is bi-R-diagonal, then for all $k\in\N$ and $a_1,\ldots,a_{2k+1}\in\{X,X^*,Y,Y^*\}$, it follows that
\[
\varphi(a_1\cdots a_{2k+1})=0.
\]
If the pair $(X,Y)$ is bi-R-diagonal, it is evident from the definition above that if a bi-free cumulant of the form $\kappa_{\chi,\tau}(c_1,\ldots,c_n)$ with entries in the set $\{X,X^*,Y,Y^*\}$ is non-zero, then the partition $\tau\in\BNC(\chi)$ and all of its blocks must necessarily satisfy certain salient properties regarding alternating conditions and their cardinalities.  These properties may be thought of as  ``reductions'' that allow one to distinguish between partitions yielding zero and non-zero cumulant contributions. These follow directly by the multiplicative properties of the bi-free cumulant function and are summarized in the next proposition, which will be frequently invoked in the subsequent sections, especially in connection to the computation of bi-free cumulants with products as arguments.    Clause $(ii)$ of the next proposition refers to cumulants involving bi-free pairs and will also be used regularly in the next sections. \begin{proposition}\label{prop: reduction properties}
Let $(A,\varphi)$ be a non-commutative $*$-probability space, let $X,Y,Z,W\in A$, $n\in\N,\chi\in\{l,r\}^n$ and $\tau\in\BNC(\chi)$.\begin{enumerate}[(i)]
    \item If the pair $(X,Y)$ is bi-R-diagonal, then  whenever $c_1,\ldots,c_n\in A$ are such that
    \[
    c_i\in\begin{cases}
        \{X, X^*\},&\text{if }\chi(i)=l\\
        \{Y, Y^*\},&\text{if }\chi(i)=r
    \end{cases}\quad (i=1,\ldots,n),
    \]
    and such that the bi-free cumulant $\kappa_{\chi,\tau}(c_1,\ldots,c_n)$ is non-vanishing, we have:
    \begin{enumerate}
        \item for all $V\in\tau$, the sequence $(c_1,\ldots,c_n)|_V$ is $*$-alternating when read with the  indices in the induced $\chi|_V$-order,
        \item every block of $\tau$ contains indices corresponding to an equal number of $*$-terms and non-$*$-terms, i.e. for all $V\in\tau$, the cardinalities of the sets
        \[
        \{i\in V: c_i\in\{X,Y\}\}\text{ and }\{i\in V: c_i\in\{X^*,Y^*\}\},
        \]
        are equal,
         \item every block of $\tau$ contains an even number of elements.
    \end{enumerate}
    \item If the pairs $(X,Y)$ and $(Z,W)$ are $*$-bi-free, then whenever $c_1,\ldots,c_n\in A$ are such that
    \[
    c_i\in\begin{cases}
        \{X, X^*, Z, Z^*\},&\text{if }\chi(i)=l\\
        \{Y, Y^*, W, W^*\},&\text{if }\chi(i)=r
    \end{cases}\quad (i=1,\ldots,n),
    \]
    and such that the bi-free cumulant $\kappa_{\chi,\tau}(c_1,\ldots,c_n)$ is non-vanishing, then every block of $\tau$ must contain indices corresponding to elements from exactly one of the pairs $(X,Y)$ or $(Z,W)$, i.e. for all $V\in\tau$ and for all $i\in V$ we have that either $c_i\in\{X, X^*, Y, Y^*\}$ or $c_i\in\{Z, Z^*, W, W^*\}$.
\end{enumerate}    
\end{proposition}
\begin{proof}
For clause $(i)$ we note that
\[
\kappa_{\chi,\tau}(c_1,\ldots,c_n)=\prod_{V\in\tau}\kappa_{\chi|_V}\left((c_1,\ldots,c_n)|_V\right),
\]
therefore if $\kappa_{\chi,\tau}(c_1,\ldots,c_n)\neq 0$, then for $V\in\tau$ the sequence $(c_1,\ldots,c_n)|_V$ must be of even length and be $*$-alternating when read with the indices in the induced $\chi|_V$-order, since the pair $(X,Y)$ is bi-R-diagonal.  In particular, this clearly implies that every block $V$ of $\tau$ must contain an even number of elements and that the indices in $V$ corresponding to elements in the set $\{X,Y\}$ are in bijection with the indices in $V$ corresponding to elements in the set $\{X^*,Y^*\}$. Clause $(ii)$ follows directly by the characterization of bi-freeness in terms of the vanishing of mixed bi-free cumulants (Theorem \ref{equivbifree}).    
\end{proof}

Towards providing canonical examples of bi-R-diagonal pairs of operators, in analogy to the case of free probability and free Haar unitaries, we will define the notion of a bi-Haar unitary pair of operators and compute its bi-free $*$-cumulants. Bi-Haar unitary pairs  will act as both the prototypical examples and building blocks of bi-R-diagonal pairs (see Theorem \ref{invdistr}). First, we recall the definition of a free Haar unitary.

\begin{definition}\label{freehaardef}
Let $(B,\psi)$ be a non-commutative $*$-probability space. A unitary $v\in B$ is called a \textit{Haar unitary} if for all $n\in\Z$ we have that:
\[   
\psi(v^n) = 
     \begin{cases}
       1, &\text{if } \ n=0,\\
       0, &\text{otherwise.} \\
     \end{cases}
\]
\end{definition}
The free $*$-cumulants of a Haar unitary are computed as follows:
\begin{proposition}[\cite{nicaspeicher}, Proposition 15.1]\label{freehaarcum}
If $v\in (B,\psi)$ is a  Haar unitary, then for every $n\in\N$ the non-vanishing free $*$-cumulants of $v$ are given by:
\[
\kappa_{2n}(v,v^*,\ldots,v,v^*) = \kappa_{2n}(v^*,v,\ldots,v^*,v) = (-1)^{n-1}\cdot C_{n-1},
\]
where $C_n$ denotes the $n$-th Catalan number. All other free cumulants with entries in the set $\{v, v^*\}$ vanish.
\end{proposition}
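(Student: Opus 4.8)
\textbf{Proof plan for Proposition \ref{freehaarcum}.}

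The plan is to reduce the computation of the $\ast$-cumulants of a free Haar unitary $v$ to the known moment-cumulant relationship, exploiting the explicit description of the $\ast$-moments of $v$. First I would record that the non-vanishing $\ast$-moments of $v$ are exactly those of the form $\psi\big((v^{\varepsilon_1}\cdots v^{\varepsilon_k})\big)$ where the word $v^{\varepsilon_1}\cdots v^{\varepsilon_k}$, after cancellation, reduces to the identity; since $v$ is unitary this happens precisely when the word is \emph{alternating} in $v$ and $v^*$ and has even length $2n$, in which case the moment equals $1$. Equivalently, writing a general $\ast$-moment as $\psi_\pi$ evaluated at an alternating argument, the only partitions contributing in the moment-cumulant expansion are those all of whose blocks are intervals of even length matching consecutive $vv^*$ or $v^*v$ pairs.

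The key step is then an induction on $n$ to show $\kappa_{2n}(v,v^*,\dots,v,v^*) = (-1)^{n-1} C_{n-1}$, and likewise with the roles of $v,v^*$ swapped. Starting from $\psi(vv^*\cdots vv^*) = 1$ (the length-$2n$ alternating moment), I would apply the moment-cumulant formula
\[
1 = \psi(v v^* \cdots v v^*) = \sum_{\pi\in\NC(2n)} \kappa_\pi(v,v^*,\dots,v,v^*),
\]
and observe that, because all odd cumulants and all non-alternating even cumulants of $v$ vanish, the only surviving partitions $\pi$ are those whose blocks, when restricted to the alternating sequence $(v,v^*,\dots,v,v^*)$, each yield an alternating sub-sequence of even length. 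One checks such $\pi$ are exactly the non-crossing pair-respecting partitions; isolating the full-block term $\kappa_{2n}$ and using the induction hypothesis on the remaining terms, one recognizes the recursion
\[
\kappa_{2n} = -\sum_{\substack{\pi\in\NC(2n),\ \pi\neq 1_{2n}\\ \text{admissible}}} \kappa_\pi(v,v^*,\dots,v,v^*),
\]
which, after the combinatorial bookkeeping, is precisely the recursion $(-1)^{n-1}C_{n-1} = -\sum$ (products of smaller such terms) defining the signed Catalan numbers; alternatively one may cite the known fact that $\sum_{\pi\in\NC(n)} \mu_{\NC}(\pi,1_n)$-type alternating sums produce $(-1)^{n-1}C_{n-1}$, which is essentially Lemma \ref{lemmamobius} specialized to the constant map. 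The vanishing of all other $\ast$-cumulants (odd order, or even order but not alternating) follows directly from the vanishing of the corresponding $\ast$-moments together with the inductive structure: any such cumulant expands via M\"obius inversion into sums of products of moments over partitions, and the alternating/parity constraints on $v$'s moments force every term to cancel.

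The main obstacle I anticipate is the careful combinatorial verification that the admissible partitions in the moment-cumulant sum are in bijection with the structures counted by the Catalan recursion — in particular checking that restricting a non-crossing partition's block to an alternating word $v v^* \cdots$ gives an alternating word if and only if the partition is built from the natural nesting of $vv^*$ pairs. This is where the $\chi$-order machinery of the bi-free setting trivializes (the map is constant, so $\sx = \id$), but one still needs the non-crossing condition to propagate the parity constraint through nested blocks. Once that structural claim is in hand, the recursion and the identification with $(-1)^{n-1}C_{n-1}$ are routine; indeed, as the excerpt notes, this is \cite[Proposition 15.1]{nicaspeicher}, so I would ultimately present the argument as a streamlined version of that proof, or simply invoke it.
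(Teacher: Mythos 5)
The paper itself offers no proof of this proposition: it is imported verbatim from \cite[Proposition 15.1]{nicaspeicher}, so your sketch has to be judged against the standard argument there. As written, it rests on a false premise. The non-vanishing joint $*$-moments of a free Haar unitary are \emph{not} the alternating ones: writing a word in $v,v^*$ as $v^{\epsilon_1}\cdots v^{\epsilon_k}$ with $\epsilon_i=\pm1$, one has $\psi(v^{\epsilon_1}\cdots v^{\epsilon_k})=1$ exactly when $\epsilon_1+\cdots+\epsilon_k=0$ (the word is \emph{balanced}) and $0$ otherwise; for instance $\psi(v\,v\,v^*\,v^*)=\psi(1)=1$ although $v\,v\,v^*\,v^*$ is not alternating. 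Consequently your concluding claim, that the vanishing of the balanced but non-alternating even-order cumulants ``follows directly from the vanishing of the corresponding $*$-moments,'' fails: those moments do not vanish. What actually happens is that the proper partitions already exhaust the moment; e.g.
\[
1=\psi(v\,v\,v^*\,v^*)=\kappa_4(v,v,v^*,v^*)+\kappa_2(v,v^*)\,\kappa_2(v,v^*),
\]
since among the proper partitions in $\NC(4)$ only $\{\{1,4\},\{2,3\}\}$ survives, whence $\kappa_4(v,v,v^*,v^*)=0$. This example also shows that the surviving blocks need not be ``intervals matching consecutive $vv^*$ pairs,'' contrary to your first paragraph: $\{1,4\}$ is not an interval.

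What does come cheaply is the vanishing of all \emph{unbalanced} cumulants (in particular all odd-order ones): $\lambda v$ has the same $*$-distribution as $v$ for every $\lambda$ of modulus one, so multilinearity gives $\kappa_k(v^{\epsilon_1},\ldots,v^{\epsilon_k})=\lambda^{\epsilon_1+\cdots+\epsilon_k}\kappa_k(v^{\epsilon_1},\ldots,v^{\epsilon_k})$, forcing the cumulant to vanish unless the word is balanced. The remaining two assertions --- that balanced non-alternating cumulants vanish and that the alternating ones equal $(-1)^{n-1}C_{n-1}$ --- must be handled by a single induction on the length, in each case isolating $\kappa_{1_{2n}}$ in the moment--cumulant formula and evaluating the sum over proper non-crossing partitions via the induction hypothesis; your treatment of the alternating case is essentially this and the identification with the signed Catalan numbers (equivalently with $\mu_{\NC}(0_n,1_n)$, as in Lemma \ref{lemmamobius} for constant $\chi$) is fine, but the non-alternating balanced case needs the same inductive bookkeeping rather than a one-line appeal to vanishing moments. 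Since the paper merely cites this result, simply invoking \cite[Proposition 15.1]{nicaspeicher} would also be acceptable; the sketch as written, however, does not yet constitute a correct proof.
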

The bi-free generalization of the notion of a Haar unitary was first proposed in \cite[Definition 10.1.2]{CNS2015-1} in the operator-valued  setting.

\begin{definition}\label{bihaardef}
Let $(A,\varphi)$ be a non-commutative $*$-probability space and $u_l, u_r$  be unitaries in $A$. The pair $(u_l, u_r)$ is called a \textit{bi-Haar unitary pair} if the following hold:
\begin{enumerate}[(i)]
\item the algebras $\alg(\{u_l, {u_l}^*\})$ and $\alg(\{u_r, {u_r}^*\})$ commute,
\item for all $n, m\in\Z$ we have that
\[   
\varphi(u_l^n\cdot u_r^m) = 
     \begin{cases}
       1, &\text{if } \ n+m=0,\\
       0, &   \text{otherwise.} \\
     \end{cases}
\]
\end{enumerate}
\end{definition}
In particular, if the pair $(u_l, u_r)$ is a bi-Haar unitary, then both $u_l$ and $u_r$ are free Haar unitaries.
\begin{example}\label{bihaarexample}
Let $G$ be a group with identity $e$ that contains an element of infinite order (i.e. there exists $g_0\in G$ such that $g_0^n\neq e$ for all $n\in\Z\setminus\{0\})$. If $\lambda:G\rightarrow \mathcal{B}(\ell_2(G))$ and $\rho :G\rightarrow \mathcal{B}(\ell_2(G))$ denote the left and right regular representations of $G$ respectively, then
it is straightforward to verify that the pair $(\lambda(g_0), \rho(g_0^{-1}))$ is a bi-Haar unitary pair, with respect to the vector state corresponding to the identity element of $G$. In particular, if $u$ denotes the bilateral shift on $\ell_2(\Z)$, then the pair $(u,u)$ is a bi-Haar unitary.\qed
\end{example} 
The joint distributions of bi-Haar unitary pairs found applications in \cite{CNS2015-1} where, in analogy to the setting of free probability, it was shown that conjugation  by  bi-Haar unitary pairs results in moving  pairs of algebras into bi-free position, while maintaining their joint distributions. Concretely, in the scalar setting the authors obtained the following result:

\begin{theorem}[Scalar case of \cite{CNS2015-1}, Theorem 10.1.3]\label{conjugation}
Let $(A,\varphi)$ be a non-commutative $*$-probability space and let $(u_l, u_r)$ be a bi-Haar unitary pair in $A$. If $(C,D)$ is a pair of subalgebras of $A$ that is bi-free from the pair $(\alg(\{u_l, u_l^*\}), \alg(\{u_r, u_r^*\}))$, then  the pairs of algebras $(u_l^* C u_l, u_r^* D u_r)$ and $(C, D)$ are bi-freely independent and, moreover, the joint distibution of the pair $(u_l^* C u_l, u_r^* D u_r)$ coincides with the joint distribution of $(C, D)$.

\end{theorem}

The commutation assumption on the left and right operators of a bi-Haar unitary pair allows one to reduce the computation of its bi-free cumulants to computing free cumulants of a free Haar unitary. In particular, we have the following:

\begin{proposition}\label{bihaarfreehaar}
Let $(A,\varphi) , (B,\psi)$ be non-commutative $*$-probability spaces and let $(u_l,u_r)$ be a bi-Haar unitary pair in $A$ and $v\in B$  a  Haar unitary. For $n\in\N$ and $\chi\in {\{l,r\}}^n$, let $a_1,\ldots,a_n\in A$ such that  
\[   
a_i\in 
     \begin{cases}
       \{u_l, {u_l}^*\}, &\text{if }  \chi(i) = l\\
       \{u_r, {u_r}^*\}, &   \text{if }   \chi(i) = r \\
     \end{cases}\quad (i=1,\ldots,n),
\]
and define $b_1,\ldots,b_n\in B$ by
\[   
b_i = 
     \begin{cases}
       v, &\text{if } \ a_{\sx(i)}\in \{u_l, u_r\} \\
       v^*, &   \text{if } \ \   a_{\sx(i)} \in \{{u_l}^*, {u_r}^*\} \\
     \end{cases}\quad (i=1,\ldots,n).
\]
Then, we have that
\[
\kappa_\chi (a_1,\ldots,a_n) = \kappa_n (b_1,\ldots,b_n),
\]
with the quantity on the left-hand side of the equation being a bi-free cumulant and the one on the right-hand side being a free cumulant.
\end{proposition}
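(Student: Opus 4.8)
The plan is to reduce the claimed identity to a family of elementary moment computations via the moment–cumulant formula. Since $\kappa_\chi=\kappa_{\chi,1_\chi}$, we have
\[
\kappa_\chi(a_1,\ldots,a_n)=\sum_{\lambda\in\BNC(\chi)}\varphi_\lambda(a_1,\ldots,a_n)\,\mu_{\BNC}(\lambda,1_\chi),
\]
and, using the recorded relation $\mu_{\BNC}(\lambda,1_\chi)=\mu_{\NC}(\sx^{-1}\cdot\lambda,1_n)$ together with the bijection $\lambda\mapsto\sx^{-1}\cdot\lambda$ from $\BNC(\chi)$ onto $\NC(n)$, the proposition follows once we prove that
\[
\varphi_\lambda(a_1,\ldots,a_n)=\psi_{\sx^{-1}\cdot\lambda}(b_1,\ldots,b_n)\qquad\text{for every }\lambda\in\BNC(\chi).
\]
Indeed, re-indexing the resulting sum over $\pi=\sx^{-1}\cdot\lambda\in\NC(n)$ then yields precisely $\sum_{\pi\in\NC(n)}\psi_\pi(b_1,\ldots,b_n)\,\mu_{\NC}(\pi,1_n)=\kappa_n(b_1,\ldots,b_n)$.

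To prove this moment identity I would note that both $\varphi_\lambda$ and $\psi_{\sx^{-1}\cdot\lambda}$ factor over blocks and that the blocks of $\sx^{-1}\cdot\lambda$ are exactly the sets $\sx^{-1}(V)$ for $V$ a block of $\lambda$; hence it suffices to show that $\varphi\big((a_1,\ldots,a_n)|_V\big)=\psi\big((b_1,\ldots,b_n)|_{\sx^{-1}(V)}\big)$ for an arbitrary nonempty $V\subseteq\{1,\ldots,n\}$. For the left-hand side, write $V=\{j_1<\cdots<j_s\}$; since $\alg(\{u_l,u_l^*\})$ and $\alg(\{u_r,u_r^*\})$ commute we may move every $l$-labelled factor of $a_{j_1}\cdots a_{j_s}$ to the left of every $r$-labelled one, and then, using $u_lu_l^*=u_l^*u_l=1$ and $u_ru_r^*=u_r^*u_r=1$, collapse these into $u_l^{N_l}u_r^{N_r}$, where $N_l$ (respectively $N_r$) is the number of unstarred minus the number of starred $l$-factors (respectively $r$-factors) among the $a_j$ with $j\in V$. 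The defining relation of a bi-Haar unitary pair then gives $\varphi\big((a_1,\ldots,a_n)|_V\big)=1$ if $N_l+N_r=0$ and $0$ otherwise; that is, the value is $1$ exactly when the number of $j\in V$ with $a_j$ starred equals the number with $a_j$ unstarred.

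For the right-hand side, $(b_1,\ldots,b_n)|_{\sx^{-1}(V)}$ is a word in $v$ and $v^*$, so, since $vv^*=v^*v=1$, it reduces to $v^m$ with $m$ equal to the number of occurrences of $v$ minus the number of occurrences of $v^*$; because $b_i$ is defined from $a_{\sx(i)}$ and $\sx$ carries $\sx^{-1}(V)$ bijectively onto $V$, this difference equals the number of unstarred $a_j$ with $j\in V$ minus the number of starred ones, i.e.\ $m=N_l+N_r$. Since $\psi(v^m)=\delta_{m,0}$, the right-hand side is $1$ under exactly the same condition, which proves the moment identity and hence the proposition. I do not expect a genuine obstacle here: the only points that require care are keeping the $\chi$-order and the restriction operations straight — in particular the index shift in the definition of the $b_i$ — and justifying the rearrangement of the product, which is licensed by the commutation hypothesis and which, as $u_l,u_l^*$ and likewise $u_r,u_r^*$ commute among themselves, does not even require tracking the internal order of the $l$- and $r$-factors.
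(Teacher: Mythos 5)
Your proof is correct and follows essentially the same route as the paper: both reduce, via the moment--cumulant formula and the identity $\mu_{\BNC}(\lambda,1_\chi)=\mu_{\NC}(\sx^{-1}\cdot\lambda,1_n)$, to the block-wise moment identity $\varphi\big((a_1,\ldots,a_n)|_V\big)=\psi\big((b_1,\ldots,b_n)|_{\sx^{-1}(V)}\big)$, which is then verified by counting starred and unstarred left and right factors and invoking the defining moment conditions of the bi-Haar and free Haar unitaries. Your write-up merely makes explicit the commutation and index-shift bookkeeping that the paper treats more briefly.
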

\begin{proof}
For $n, m\in \Z,$ the following relation between the joint $*$-moments of $(u_l, u_r)$ and the $*$-moments of $v$ is immediate by Definitions \ref{freehaardef} and \ref{bihaardef}
\[
\varphi( {u_l}^n\cdot {u_r}^m) = \psi (v^{n+m}).
\]
Note that since the algebras $\alg(\{u_l,u_l^*\})$ and $\alg(\{u_r,u_r^*\})$ commute, every joint $*$-moment of the pair $(u_l,u_r)$ factorizes in a moment that has a form similar to the left hand-side of the previous expression. The moment-cumulant formulas yield that
\[
\kappa_{\chi}(a_1,\ldots,a_n)= \sum_{\tau\in\BNC(\chi)}\varphi_{\tau}(a_1,\ldots,a_n)\mu_{\BNC}(\tau,1_{\chi}),
\]
and
\[
\kappa_n(b_1,\ldots,b_n) = \sum_{\pi\in\NC(n)}\psi_{\pi}(b_1,\ldots,b_n)\mu_{\NC}(\pi,1_n).
\]
The main observation needed to lead us to the conclusion of the proof is that for all $\tau\in\BNC(\chi)$ and for all $V\in\tau$, we have that
\[
\varphi((a_1,\ldots,a_n)|_V) = \psi\left((b_1,\ldots,b_n)|_{\sx^{-1}(V)}\right).
\]
Indeed, let $\tau\in\BNC(\chi)$ and $V\in\tau$. Define the sets
\[
I_1=\{i\in V : a_i=u_l\}, I_2=\{i\in V : a_i=u_l^*\}, I_3=\{i\in V : a_i=u_r\}\text{ and } I_4=\{i\in V : a_i=u_r^*\},
\]
and let $n_i\in\N$ to be  equal to their respective cardinalities   ($i=1,2,3,4$). Then, by the definition of $b_1,\ldots,b_n$, we have 
\[
\varphi((a_1,\ldots,a_n)|_V)=\varphi\left(u_l^{n_1-n_2}u_r^{n_3-n_4}\right)=\psi\left(v^{n_1+n_3-n_2-n_4}\right)=\psi\left((b_1,\ldots,b_n)|_{\sx^{-1}(V)}\right).
\]
Hence, this implies that for any bi-non-crossing partition $\tau\in\BNC(\chi)$ we obtain
\[
\varphi_{\tau}(a_1,\ldots,a_n)\mu_{\BNC}(\tau,1_{\chi})= \varphi_{\tau}(a_1,\ldots,a_n)\mu_{\NC}(\sx^{-1}\cdot\tau,1_n)=\psi_{\sx^{-1}\cdot\tau}(b_1,\ldots,b_n)\mu_{\NC}(\sx^{-1}\cdot\tau,1_n).
\]
This completes the proof.
\end{proof}

A combination of  Propositions \ref{freehaarcum} and \ref{bihaarfreehaar} gives a complete computation of the bi-free $*$-cumulants involving a  bi-Haar unitary pair.

\begin{corollary}\label{bihaarcum}
Let $(A,\varphi)$ be a non-commutative $*$-probability space and $(u_l, u_r)$ a bi-Haar unitary pair in $A$. Also, let $n\in\N, \chi\in {\{l,r\}}^{2n}$ and $a_1,\ldots,a_{2n}\in A$ such that
\begin{enumerate}[(a)]
\item for all $i=1,\ldots,2n$ we have
\[   
a_i\in 
     \begin{cases}
       \{u_l, {u_l}^*\}, &\text{if }  \chi(i) = l\\
       \{u_r, {u_r}^*\}, &\text{if } \chi(i) = r, \\
     \end{cases}
\]
\item the sequence $(a_{\sx(1)},\ldots, a_{\sx (2n)})$ is $*$-alternating.
\end{enumerate}
Then,
\[
\kappa_\chi (a_1,\ldots,a_{2n}) = (-1)^{n-1}\cdot C_{n-1},
\]
where $C_n$ denotes the $n$-th Catalan number. All other bi-free $*$-cumulants of the pair $(u_l, u_r)$ vanish. In particular, the pair $(u_l, u_r)$ is bi-R-diagonal.
\end{corollary}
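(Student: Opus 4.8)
The plan is to combine Propositions~\ref{freehaarcum} and \ref{bihaarfreehaar} directly and then read off the consequences. The essential point is that Proposition~\ref{bihaarfreehaar} transfers any bi-free cumulant $\kappa_\chi(a_1,\ldots,a_n)$ with entries from $\{u_l,u_l^*,u_r,u_r^*\}$ into a free cumulant $\kappa_n(b_1,\ldots,b_n)$ of a free Haar unitary $v$, where $b_i=v$ or $b_i=v^*$ according as $a_{\sx(i)}$ is an unstarred or starred generator. Thus the full list of non-vanishing bi-free cumulants is obtained by pulling back, along the map $(a_1,\ldots,a_n)\mapsto(b_1,\ldots,b_n)$, the list of non-vanishing free $*$-cumulants of $v$ supplied by Proposition~\ref{freehaarcum}.

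First I would fix $n\in\N$, $\chi\in\{l,r\}^{2n}$ and $a_1,\ldots,a_{2n}$ as in the statement, pass to the Haar unitary $v\in(B,\psi)$ and the induced tuple $(b_1,\ldots,b_{2n})$, and invoke Proposition~\ref{bihaarfreehaar} to get $\kappa_\chi(a_1,\ldots,a_{2n})=\kappa_{2n}(b_1,\ldots,b_{2n})$. By Proposition~\ref{freehaarcum}, the free cumulant $\kappa_{2n}(b_1,\ldots,b_{2n})$ is nonzero precisely when the tuple $(b_1,\ldots,b_{2n})$ alternates between $v$ and $v^*$ (in either order), in which case its value is $(-1)^{n-1}C_{n-1}$; all free cumulants of odd order, and all even-order ones whose argument is not alternating in $v$ and $v^*$, vanish. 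Now observe that $(b_1,\ldots,b_{2n})$ alternates in $v,v^*$ if and only if $(a_{\sx(1)},\ldots,a_{\sx(2n)})$ alternates in unstarred and starred generators, which is exactly hypothesis (b). Hence under (a) and (b) we conclude $\kappa_\chi(a_1,\ldots,a_{2n})=(-1)^{n-1}C_{n-1}$, and any bi-free cumulant of odd order, or of even order with non-alternating $\chi$-ordered argument, equals the corresponding vanishing free cumulant and is therefore $0$.

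Finally I would spell out that these computed cumulants verify Definition~\ref{birdiagdef}: the odd-order bi-free cumulants vanish (clause (i)), and for the even-order ones the only surviving case is when $(a_{\sx(1)},\ldots,a_{\sx(2n)})$ is alternating in $*$-terms and non-$*$-terms, which in particular forces it to be of the homogeneous alternating form $(Z,Z^*,\ldots,Z,Z^*)$ or $(Z^*,Z,\ldots,Z^*,Z)$ with a single $Z\in\{u_l,u_r\}$ — one of the admissible forms in clause (ii) (indeed form (a) there). Thus $(u_l,u_r)$ is bi-R-diagonal. I do not anticipate a genuine obstacle here; the only minor care needed is the bookkeeping between the $\chi$-order on the $a_i$ and the natural order on the $b_i$, but this is precisely the content of the displayed identity $\varphi((a_1,\ldots,a_{2n})|_V)=\psi((b_1,\ldots,b_{2n})|_{\sx^{-1}(V)})$ already established in the proof of Proposition~\ref{bihaarfreehaar}, so the corollary is essentially a translation of Proposition~\ref{freehaarcum} through that proposition.
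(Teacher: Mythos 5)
Your proposal is correct and follows exactly the route the paper intends: the paper gives no separate proof of this corollary, stating only that it is ``a combination of Propositions \ref{freehaarcum} and \ref{bihaarfreehaar}'', which is precisely your transfer-to-the-free-Haar-unitary argument. One small slip in your final paragraph: the surviving even-order cumulants are \emph{not} forced to be of the homogeneous form $(Z,Z^*,\ldots,Z,Z^*)$ with a single $Z$ --- mixed cumulants such as $\kappa_\chi(u_l,u_l^*,u_r,u_r^*)$ (with $\chi=(l,l,r,r)$) are alternating and nonzero, and are of form (b) of Definition \ref{birdiagdef}, not form (a). This does not damage the conclusion, since the $\chi$-order always lists all left operators before all right operators, so any sequence alternating in $*$-terms and non-$*$-terms in the $\chi$-order automatically falls into one of the admissible forms (a)--(e), and bi-R-diagonality follows.
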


\subsection{Operator-Valued Bi-Free Independence and R-cyclic Pairs of Matrices}\label{sctn:bbbb}

In the spirit of \cite{CNS2015-1} and \cite{S2016-2}, we will present the basic definitions regarding operator-valued bi-free independence and a number of results concerning R-cyclic pairs of matrices. The results that are cited will be used in Section \ref{sctn:distributions}
 to discuss an equivalent characterization of the condition of  bi-R-diagonality, which will be formulated in terms of the bi-freeness of certain matrix pairs from scalar matrices with amalgamation over the diagonal scalar matrices (see Theorem \ref{together}).

\begin{definition}
Let $B$ be a unital algebra.
\begin{enumerate}[(i)]
\item A \emph{$B$-$B$-bimodule with specified $B$-vector state} is a triple $(\mathcal{X},\overset{\circ}{\mathcal{X}},p)$ where $\mathcal{X}$ is a direct sum of $B$-$B$-bimodules
\[
\mathcal{X} = B\oplus\overset{\circ}{\mathcal{X}},
\]
and $p:\mathcal{X}\rightarrow B$ is the linear map given by
\[
p(b\oplus\eta)=b,
\]
for all $b\in B$ and $\eta\in\overset{\circ}{\mathcal{X}}$. On $\mathcal{L}(\mathcal{X})$, the space of linear maps on $\mathcal{X}$, we  define the \textit{expectation of $\mathcal{L}(\mathcal{X})$ onto $B$} which is    the linear map given by 
\[
E_{\mathcal{L}(\mathcal{X})}(T) = p\left(T\left(1_B\oplus 0\right)\right),
\]
for all $T\in\mathcal{L}(\mathcal{X})$.
\item A \emph{$B$-$B$-non commutative probability space} is a triple $(A,E_A,\varepsilon)$, where $A$ is a unital algebra, $\varepsilon:B\otimes {B}^{\text{op}}\rightarrow A$ is a unital homomorphism such that both maps ${\varepsilon}|_{B\otimes 1_B}$ and ${\varepsilon}|_{1_B\otimes B^{\text{op}}}$ are injective and $E_A:A\rightarrow B$ is a linear map such that
\[
E_A(\varepsilon(b_1\otimes b_2)Z)= b_1 E_A(Z)b_2,
\]
and
\[
E_A(Z\varepsilon(b\otimes 1_B)) = E_A(Z\varepsilon(1_B\otimes b)),
\]
for all $b, b_1,b_2\in B$ and $Z\in A$. The unital subalgebras of $A$ defined as
\[
A_l = \{Z\in A : Z\varepsilon(1_B\otimes b)=\varepsilon(1_B\otimes b)Z \text{ for all }b\in B\},
\]
and
\[
A_r = \{Z\in A : Z\varepsilon(b\otimes 1_B)=\varepsilon(b\otimes 1_B)Z \text{ for all }b\in B\},
\]
are called the \emph{left} and \emph{right algebras} of $A$ respectively.
\item A \emph{pair of $B$-faces} in a $B$-$B$-non commutative probability space $ (A,E_A,\varepsilon)$ consists of a pair $(C,D)$ of unital subalgebras of $A$ such that
\[
\varepsilon(B\otimes 1_B)\subseteq C\subseteq A_l \text{ and } \varepsilon(1_B\otimes {B}^{\text{op}})\subseteq D\subseteq A_r.
\]\item A family ${\{(C_k,D_k)\}}_{k\in K}$ of pairs of $B$-faces in a $B$-$B$-non commutative probability space $(A,E_A,\varepsilon)$ is said to be \emph{bi-free with amalgamation over $B$} if there exist $B$-$B$-bimodules with specified $B$-vector states ${\{(\mathcal{X}_k,\overset{\circ}{\mathcal{X}_k},p_k)\}}_{k\in K}$ and unital homomorphisms $l_k:C_k\rightarrow\mathcal{L}_l(\mathcal{X}_k)$ and $r_k:D_k\rightarrow\mathcal{L}_r(\mathcal{X}_k)$ such that the joint distribution of ${\{(C_k,D_k)\}}_{k\in K}$ with respect to $E_A$ is equal to the joint distribution of the images of
\[
{\{((\lambda_k\circ l_k)(C_k),(\rho_k\circ r_k)(D_k))\}}_{k\in K}
\]
inside $\mathcal{L}(*_{k\in K}\mathcal{X}_k)$ with respect to $E_{\mathcal{L}(*_{k\in K}\mathcal{X}_k)}$, where $\lambda_k$ and $\rho_k$ denote the left and right regular representations onto $\mathcal{X}_k\subseteq *_{k\in K}\mathcal{X}_k$, respectively. 

If $S_k\subseteq A_l$ and $V_k\subseteq A_r$ for all $k\in K$, we will say that the family ${\{(S_k,V_k)
\}}_{k\in K}$ is bi-free with amalgamation over $B$ if the family 
\[
{\{(\alg(\varepsilon(B\otimes 1_B)\cup S_k),\alg(\varepsilon(1_B\otimes {B}^{\text{op}})\cup V_k))\}}_{k\in K},
\]
of pairs of $B$-faces is bi-free with amalgamation over $B$.

\end{enumerate}
\end{definition}

See \cite[Section 3]{CNS2015-1} for a discussion on why $B$-$B$-non-commutative probability spaces are the correct framework to formulate the notions of operator-valued bi-free probability. There, a combinatorial approach was adopted and the bi-multiplicative operator-valued bi-free cumulant maps were defined and  used to characterize operator-valued bi-free independence.

Let $(A,\varphi)$ be a non-commutative $*$-probability space and let $d\in\N$. In the algebra $\mathcal{M}_d(A)$ of all $d\times d$ matrices over $A$, consider the unital subalgebras $\mathcal{M}_d(\C)$ and $\mathcal{D}_d$ consisting of all scalar matrices and all diagonal scalar matrices respectively and let $F_d:\mathcal{M}_d(\C)\rightarrow \mathcal{D}_d$ denote the conditional expectation onto the diagonal. We will  recall from \cite[Section 4]{S2016-2} the process on how to turn $\mathcal{L}(\mathcal{M}_d(A))$, the space of all linear maps on $\mathcal{M}_d(A)$, into a $\mathcal{M}_d(\C)$-$\mathcal{M}_d(\C)$-non-commutative probability space. We will denote by $A=[a_{i,j}]$ a matrix whose ${(i,j)}$-th entry equals $a_{i,j}$. Define the unital, linear map $\varphi_d:\mathcal{M}_d(A)\rightarrow \mathcal{M}_d(\C)$ by
\[
\varphi_d (T)= [\varphi (T_{i,j})],\text{ for all }T=[T_{i,j}]\in\mathcal{M}_d(A).
\]
Also, for $A=[a_{i,j}]\in\mathcal{M}_d(\C),$ let
\[
L_A(T) = \left[\sum_{k=1}^d a_{i,k}T_{k,j}\right]
\text{ and }R_A(T) = \left[\sum_{k=1}^d a_{k,j}T_{i,k}\right],\text{ for all }T=[T_{i,j}]\in\mathcal{M}_d(A).
\]
Then, if $\varepsilon:\mathcal{M}_d(\C)\otimes {\mathcal{M}_d(\C)}^{\text{op}}\rightarrow \mathcal{L}(\mathcal{M}_d(A))$ is defined as 
\[
\varepsilon\left(A\otimes A'\right)=L_A R_{A'},\text{ for }A=[a_{i,j}], A'=[a'_{i,j}]\in\mathcal{M}_d(\C),
\]
and $E_d:\mathcal{L}(\mathcal{M}_d(A))\rightarrow \mathcal{M}_d(\C)$ is given by
\[
E_d(Z) = \varphi_d(Z(\iden_d)),\text{ for } Z\in\mathcal{L}(\mathcal{M}_d(A)),
\]
where $\iden_d$ denotes the $d\times d$ identity matrix, we have that the triple $(\mathcal{L}(\mathcal{M}_d(A)),E_d,\varepsilon)$ is a $\mathcal{M}_d(\C)$-$\mathcal{M}_d(\C)$-non-commutative probability space. We will also need the unital homomorphisms $L:\mathcal{M}_d(A)\rightarrow {\mathcal{L}(\mathcal{M}_d(A))}_l$ and $R:{\mathcal{M}_d(A^{\text{op}})}^{\text{op}}\rightarrow {\mathcal{L}(\mathcal{M}_d(A))}_r$ given by
\[
L(S)(T) = \left[\sum_{k=1}^d S_{i,k}T_{k,j}\right]\text{ and }R(S)(T) = \left[\sum_{k=1}^d S_{k,j}T_{i,k}\right],\text{ for all }S=[S_{i,j}], T=[T_{i,j}]\in\mathcal{M}_d(A).
\]
In the setting of free probability, there is a connection between   R-diagonal operators and  R-cyclic matrices (see \cite[Example 20.5]{nicaspeicher}). In the bi-free setting, R-cyclic pairs of matrices were first defined and studied in \cite{S2016-2}.

\begin{definition}\cite[Definition 4.4]{S2016-2}\label{rcyclicdef}
Let $(A,\varphi)$ be a non-commutative probability space, $d\in\N$, $I,J$  be disjoint index sets and let  ${\{[Z_{k;i,j}]\}}_{k\in I}\cup{\{[Z_{k;i,j}]\}}_{k\in J}\subseteq\mathcal{M}_d(A)$. The pair $(\{[Z_{k;i,j}]\}_{k\in I}, \{[Z_{k;i,j}]\}_{k\in J})$ is called \emph{R-cyclic} if for all $n\in\N, \omega:\{1,\ldots,n\}\rightarrow I\sqcup J$ and $1\leq i_1,\ldots,i_n,j_1,\ldots,j_n\leq d$, by defining $\chi\in {\{l,r\}}^n$ as
\[
\chi(i)= \begin{cases}
       l, &\text{if }  \omega (i)\in I\\
        r, &\text{if } \omega (i)\in J\\   
     \end{cases}  \quad (i=1,\ldots,n),
\]
we have that
\[
{\kappa}_{\chi}(Z_{\omega(1);i_1,j_1},Z_{\omega(2);i_2,j_2}
,\ldots,Z_{\omega(n);i_n,j_n})=0,
\]
whenever at least one of the relations
\[
j_{\sx(1)}=i_{\sx(2)}, j_{\sx(2)}=i_{\sx(3)},\ldots,j_{\sx(n-1)}=i_{\sx(n)},j_{\sx(n)}=i_{\sx(1)},
\]
is \emph{not} satisfied.
\end{definition}

The following result was mentioned (but not proved) in \cite[Section 4]{S2016-2} and we include the proof for the convenience of the reader.

\begin{proposition}\label{rcyclic}
Let $(A,\varphi)$ be a non-commutative $*$-probability space and let $X,Y\in A$. The following are equivalent:
\begin{enumerate}[(i)]
\item the pair $(X,Y)$ is bi-R-diagonal,
\item in  $\mathcal{M}_2(A)$, the pair $(Z,W)$ defined as 
\[
Z={[Z_{i,j}]} = \begin{bmatrix}
0 & X\\ X^* &0
\end{bmatrix} \text{ and }W={[W_{i,j}]} = \begin{bmatrix}
0 & Y\\ Y^* &0
\end{bmatrix},
\]
is R-cyclic.

\end{enumerate}
\end{proposition}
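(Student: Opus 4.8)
The plan is to show that the defining vanishing conditions for bi-R-diagonality of $(X,Y)$ and the R-cyclicity of the pair of $2\times 2$ matrices are literally the same conditions, once one tracks the bookkeeping of matrix indices. The key device is that a bi-free cumulant of matrix entries decomposes according to the matrix structure: since each $Z_{i,j}$ (and $W_{i,j}$) is nonzero only for off-diagonal $(i,j)$, with $Z_{1,2}=X$, $Z_{2,1}=X^*$, $W_{1,2}=Y$, $W_{2,1}=Y^*$, a cumulant $\kappa_\chi(Z_{\omega(1);i_1,j_1},\dots,Z_{\omega(n);i_n,j_n})$ is nonzero only if each selected entry is off-diagonal, i.e. $\{i_t,j_t\}=\{1,2\}$ for all $t$. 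Under that constraint, the entry in slot $t$ is $X$ or $X^*$ according to whether $(i_t,j_t)=(1,2)$ or $(2,1)$ (and similarly $Y,Y^*$ on the right), so the cumulant of matrix entries equals the corresponding bi-free cumulant $\kappa_\chi(a_1,\dots,a_n)$ with $a_t\in\{X,X^*\}$ or $\{Y,Y^*\}$ dictated by $\chi(t)$.

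The heart of the matter is then the combinatorial translation: the cyclic chain of equalities $j_{\sx(1)}=i_{\sx(2)},\ j_{\sx(2)}=i_{\sx(3)},\dots,\ j_{\sx(n)}=i_{\sx(1)}$ (read in the $\chi$-order) is exactly the statement that the sequence $(a_{\sx(1)},\dots,a_{\sx(n)})$ alternates between ``$*$'' and ``non-$*$''. Indeed, with all index pairs off-diagonal, $j_t\in\{1,2\}$ and $i_{t'}\in\{1,2\}$, so $j_{\sx(k)}=i_{\sx(k+1)}$ forces consecutive slots (in $\chi$-order) to have opposite $(i,j)$-orientation, which is precisely the alternation of the associated operators between a non-$*$ term ($X$ or $Y$, coming from $(1,2)$) and a $*$-term ($X^*$ or $Y^*$, coming from $(2,1)$). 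For a cumulant with a single face (all left, or all right) the cyclic condition being satisfied is equivalent to $n$ even and full alternation, matching forms (a)--(c) of Definition \ref{birdiagdef} restricted to one face; the subtlety is the mixed case, where the $\chi$-order reads all $l$-slots (increasing) then all $r$-slots (decreasing), and one must check that the cyclic chain spanning the ``junction'' between the last $l$-slot, the $r$-block, and back to the first $l$-slot is consistent with exactly the admissible mixed patterns (d),(e) (and pure-$Y$-after-pure-$X$ patterns (b),(c)) — but this is a direct consequence of the fact that a cyclic alternation of a $\pm$ labelling is governed only by parity, so the conditions coincide.

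Concretely I would argue both directions as a biconditional. For (i)$\Rightarrow$(ii): suppose $(X,Y)$ is bi-R-diagonal and fix $n$, $\omega$, and indices $i_1,\dots,i_n,j_1,\dots,j_n$ with at least one cyclic relation failing; if some $(i_t,j_t)$ is on the diagonal the cumulant vanishes because the corresponding matrix entry is $0$ and $\kappa$ is multilinear; otherwise all entries are off-diagonal, the cumulant equals $\kappa_\chi(a_1,\dots,a_n)$ as above, and the failure of a cyclic relation translates into the sequence $(a_{\sx(1)},\dots,a_{\sx(n)})$ failing to be alternating in the prescribed sense (or $n$ odd), so this equals $0$ by Definition \ref{birdiagdef}. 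For (ii)$\Rightarrow$(i): given a bi-free cumulant $\kappa_\chi(a_1,\dots,a_n)$ with $a_t\in\{X,X^*\}$ for $\chi(t)=l$ and $a_t\in\{Y,Y^*\}$ for $\chi(t)=r$, choose $\omega=\chi$ (via a fixed identification of $l$ with $I$ and $r$ with $J$), and pick $(i_t,j_t)=(1,2)$ if $a_t\in\{X,Y\}$ and $(i_t,j_t)=(2,1)$ if $a_t\in\{X^*,Y^*\}$; then $\kappa_\chi(Z_{\omega(1);i_1,j_1},\dots)=\kappa_\chi(a_1,\dots,a_n)$, and if $n$ is odd or $(a_{\sx(1)},\dots,a_{\sx(n)})$ is not of one of the admissible alternating forms, then some cyclic index relation fails, so R-cyclicity forces the cumulant to be $0$, which is exactly clauses (i) and (ii) of bi-R-diagonality.

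The main obstacle I anticipate is not conceptual but bookkeeping: making the dictionary between the cyclic relations ``$j_{\sx(k)}=i_{\sx(k+1)}$'' and the five admissible $\chi$-ordered forms (a)--(e) fully precise, especially checking that in the mixed $l/r$ case the wrap-around relations $j_{\sx(n)}=i_{\sx(1)}$ and the $l$-to-$r$ junction behave correctly under the $\chi$-order (left indices increasing, right indices decreasing). One should verify carefully that a cyclic sequence over $\{1,2\}$ in which consecutive (cyclically, in $\chi$-order) pairs are ``linked'' ($j=i$) is equivalent to the pair-orientations alternating cyclically, hence the operator word alternating between $*$ and non-$*$ when read in $\chi$-order, and that the only way to also have ``all $X$'s before all $Y$'s'' compatibly with this cyclic alternation yields precisely forms (a)--(e); this parity argument is elementary but must be spelled out to connect the two definitions without gaps.
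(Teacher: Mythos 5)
Your proposal is correct and follows essentially the same route as the paper's proof: reduce to off-diagonal matrix entries (diagonal ones kill the cumulant by multilinearity), identify $(1,2)$-entries with non-$*$-terms and $(2,1)$-entries with $*$-terms, and observe that the failure of a cyclic relation $j_{\sx(k)}=i_{\sx(k+1)}$ (or the wrap-around relation) over the two-element index set is precisely the statement that consecutive terms in the $\chi$-order share an orientation, i.e.\ the sequence fails to alternate or has odd length. The bookkeeping you flag about forms (a)--(e) is harmless, since in the $\chi$-order the left ($X$-type) terms automatically precede the right ($Y$-type) terms, so alternation in $*$-terms is the only substantive condition.
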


\begin{proof}
Let $n\in\N,\chi\in\{l,r\}^n$ and $a_{i_1,j_1},\ldots,a_{i_n,j_n}\in A$ be such that  $a_{i_m,j_m}=Z_{i_m,j_m}$ if $\chi(m)=l$, while $a_{i_m,j_m}=W_{i_m,j_m}$ if $\chi(m)=r$, for all $1\leq m\leq n$. We may clearly assume that each $a_{i_m,j_m}$ is non-zero, thus $i_m\neq j_m$ for all $1\leq m\leq n$.
The main observation that will make the equivalence of the proposition apparent is that the condition that at least one of the relations 
\[
j_{\sx(1)}=i_{\sx(2)}, j_{\sx(2)}=i_{\sx(3)},\ldots,j_{\sx(n-1)}=i_{\sx(n)},j_{\sx(n)}=i_{\sx(1)},
\]
is not satisfied is equivalent to the statement that the sequence
\[
\left(a_{i_{\sx(1)},j_{\sx(1)}},\ldots, a_{i_{\sx(n)},j_{\sx(n)}}\right),
\]
is either non-$*$-alternating, or is of odd length. Indeed, first suppose that $j_{\sx(m)}\neq i_{\sx(m+1)}$ for some $m\in\{1,\ldots,n-1\}$ and notice that this implies that we must have
\[
i_{\sx(m)}=i_{\sx(m+1)}\text{ and }j_{\sx(m)}=j_{\sx(m+1)}.
\]
But this is equivalent to stating that the elements $a_{i_{\sx(m)},j_{\sx(m)}}$ and $a_{i_{\sx(m+1)},j_{\sx(m+1)}}$ both correspond to either $*$-terms or non-$*$-terms and hence the sequence
\[
\left(a_{i_{\sx(1)},j_{\sx(1)}},\ldots, a_{i_{\sx(n)},j_{\sx(n)}}\right),
\]
is not $*$-alternating. Next, assume that $j_{\sx(n)}\neq i_{\sx(1)}$. As before, we must have that 
\[
i_{\sx(1)}=i_{\sx(n)}\text{ and } j_{\sx(1)}=j_{\sx(n)}.
\]
This is equivalent to stating that the first and last terms of the sequence
\[
\left(a_{i_{\sx(1)},j_{\sx(1)}},\ldots, a_{i_{\sx(n)},j_{\sx(n)}}\right),
\]
both correspond to either $*$-terms or non-$*$-terms, which means that  this sequence either is non-$*$-alternating, or is of odd length.
\end{proof}

The main result we will need for Theorem \ref{together} concerns the following  characterization of R-cyclic pairs.

\begin{theorem}[\cite{S2016-2}, Theorem 4.9]\label{thmrcyclic}
Let $(A,\varphi)$ be a non-commutative probability space, $d\in\N$, $I,J$ be disjoint index sets and  let ${\{[Z_{k;i,j}]\}}_{k\in I}\cup{\{[Z_{k;i,j}]\}}_{k\in J}\subseteq\mathcal{M}_d(A)$. The following are equivalent:
\begin{enumerate}[(i)]
\item the pair $\left({\{[Z_{k;i,j}]\}}_{k\in I},{\{[Z_{k;i,j}]\}}_{k\in J}\right)$ is R-cyclic,
\item  the family $\left({\{L([Z_{k;i,j}])\}}_{k\in I},{\{R([Z_{k;i,j}])\}}_{k\in J}\right)
$
is bi-free from
$
\left(L(\mathcal{M}_d(\C)), R\left({\mathcal{M}_d(\C)}^{\text{op}}\right)\right)
$
with amalgamation over $\mathcal{D}_d$ with respect to $F_d\circ E_d$.

\end{enumerate}
\end{theorem}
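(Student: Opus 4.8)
The plan is to recast both conditions as statements about the operator-valued bi-free cumulant maps $\kappa^{\mathcal D}$ of the $\mathcal D$-$\mathcal D$-non-commutative probability space $(\mathcal L(\mathcal M_d(A)),F\circ E,\varepsilon)$ and then to compare them through a \emph{matricial} cumulant formula relating these to the scalar bi-free cumulants $\kappa_\chi$ of Definition \ref{rcyclicdef}. By the combinatorial characterization of operator-valued bi-free independence from \cite{cnscomb}, condition (ii) is equivalent to the vanishing of every mixed $\mathcal D$-valued bi-free cumulant $\kappa^{\mathcal D}_\chi(T_1,\dots,T_n)$ whose entries are distributed among the two families $(\{L([Z_{k;i,j}])\}_{k\in I},\{R([Z_{k;i,j}])\}_{k\in J})$ and $(L(\mathcal M_d(\C)),R(\mathcal M_d(\C)^{\mathrm{op}}))$, with at least one entry honestly from each family (entries $\varepsilon(\mathcal D\otimes 1_{\mathcal D})$ and $\varepsilon(1_{\mathcal D}\otimes\mathcal D^{\mathrm{op}})$ being permitted freely on the left and right sides). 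Since $\mathcal M_d(\C)$ is linearly spanned by the matrix units $e_{p,q}$ and $\kappa^{\mathcal D}$ is multilinear and bi-multiplicative, it suffices to test these mixed cumulants on words whose ``scalar'' slots carry operators $L(e_{p,q})$ or $R(e_{p,q})$.

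The technical core is a \emph{matricial cumulant formula} (compare the matricial cumulant formula in \cite{nicaspeicher}; such identities are available in the operator-valued bi-free framework of \cite{cnscomb}): for $\chi\in\{l,r\}^n$ and matrices $[Z^{(1)}_{i,j}],\dots,[Z^{(n)}_{i,j}]\in\mathcal M_d(A)$, writing $H_i=L([Z^{(i)}_{\cdot,\cdot}])$ when $\chi(i)=l$ and $H_i=R([Z^{(i)}_{\cdot,\cdot}])$ when $\chi(i)=r$, one has
\[
\kappa^{\mathcal D}_\chi(H_1,\dots,H_n)_{r,r}=\sum \kappa_\chi\big((Z^{(1)})_{r_1,s_1},\dots,(Z^{(n)})_{r_n,s_n}\big),
\]
the sum ranging over all index assignments in which, reading $1,\dots,n$ in the $\chi$-order, the column index of each matrix coincides with the row index of the next one cyclically and the outermost of these indices is pinned to $r$ by $F$. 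One then checks, using bi-multiplicativity to absorb the diagonal parts of matrix units into neighbouring arguments, that inserting an $L(e_{p,q})$ or $R(e_{p,q})$ into a slot only freezes one summation index — to $p$ from the left, or to $q$ from the right, according to the side on which the unit sits — and leaves a scalar bi-free cumulant of the genuine remaining entries. Establishing this formula, and in particular the precise way in which matrix units ``consume'' indices consistently with the $\chi$-order and the left/right asymmetry of bi-free probability while the additional diagonal projection $F$ is what forces amalgamation over $\mathcal D$ rather than over $\mathcal M_d(\C)$, is the step I expect to be the main obstacle; the rest is formal once it is in hand.

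Granted the formula, both implications are short. For (i) $\Rightarrow$ (ii): a mixed $\mathcal D$-valued cumulant, after the reduction to matrix units and the matricial formula, expands into a combination of scalar bi-free cumulants $\kappa_\chi$ of entries of the $Z$'s whose index pattern is forced to violate at least one of the cyclic compatibility relations of Definition \ref{rcyclicdef}, precisely because at least one slot originally carried a matrix unit in place of a $Z$; R-cyclicity then forces every such term to vanish, so the mixed cumulant vanishes and (ii) follows by \cite{cnscomb}. For (ii) $\Rightarrow$ (i): conversely, given $\chi$, $\omega$ and indices for which some relation $j_{\sx(t)}=i_{\sx(t+1)}$ (read cyclically) fails, surround and interleave the matrices $L/R([Z_{\omega(m);i_m,j_m}])$ with suitable matrix units pinning all interior indices; bi-multiplicativity together with the matricial formula makes the appropriate diagonal entry of that $\mathcal D$-valued cumulant equal to a nonzero scalar multiple of $\kappa_\chi(Z_{\omega(1);i_1,j_1},\dots,Z_{\omega(n);i_n,j_n})$, and that cumulant is mixed, hence vanishes by (ii); therefore the scalar bi-free cumulant vanishes and the pair is R-cyclic. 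The sole genuine obstacle, as stressed, is the matricial cumulant formula and the accompanying statement that adjoining $\mathcal M_d(\C)$ merely consumes summation indices and never produces new scalar bi-free cumulants.
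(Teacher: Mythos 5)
The paper does not actually prove this statement: Theorem~\ref{thmrcyclic} is imported verbatim as \cite[Theorem 4.9]{skoufranisdist}, so there is no in-paper argument to compare yours against. Judged on its own, your outline follows the route one would expect (and the route the cited source takes): translate condition (ii) into the vanishing of mixed $\mathcal D$-valued bi-free cumulants via the combinatorial characterization of operator-valued bi-free independence from \cite{cnscomb}, and connect those to the scalar cumulants $\kappa_\chi$ of the matrix entries through a matricial cumulant formula in which indices are matched cyclically in the $\chi$-order. That is the right skeleton, and your observation that the diagonal conditional expectation $F$ is what makes the amalgamation happen over $\mathcal D$ rather than $\mathcal M_d(\C)$ is correct.

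The genuine gap is the one you flag yourself: the matricial cumulant formula is asserted, not proved, and it is not a routine verification in the bi-free setting. It requires the full bi-multiplicativity of the operator-valued cumulant maps, a careful treatment of how $R([Z_{i,j}])$ (which acts by right multiplication, so its indices contract in the opposite sense to $L$) threads into the cyclic index-matching when read in the $\chi$-order, and a proof that the $\sigma_\chi$-ordering of left and right slots is compatible with the way $F\circ E$ pins the outer index. Without this lemma neither implication is established. Your converse direction has an additional soft spot: inserting off-diagonal matrix units $e_{p,q}$ to ``pin'' interior indices produces cumulants that cannot be absorbed by bi-multiplicativity (since $e_{p,q}\notin\mathcal D$ for $p\neq q$), so showing that the vanishing of these mixed cumulants forces the vanishing of a \emph{single} scalar cumulant $\kappa_\chi(Z_{\omega(1);i_1,j_1},\ldots,Z_{\omega(n);i_n,j_n})$ — rather than of some linear combination — needs either an explicit computation of the relevant diagonal entry or an induction through the moment--cumulant formula. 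As written, the proposal is a correct plan with its central technical ingredient missing.
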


\section{Operations Involving Bi-R-Diagonal Pairs}\label{sctn:products}
In this section, we will study the behaviour of bi-R-diagonal pairs of operators under the taking of sums, products and arbitrary powers, where, in most cases, a $*$-bi-free independence condition will be assumed. The proofs obtained will indicate that most of the results that hold for free R-diagonal elements (see \cite{approach} and \cite[Lecture 15]{nicaspeicher}) have corresponding generalizations in the bi-free setting. The last part of this section will be devoted to the study of joint distributions of pairs formed by freely and classically independent R-diagonal operators.
We begin with the following proposition  regarding sums of $*$-bi-free bi-R-diagonal pairs.

\begin{proposition}\label{birsum}
Let $(A,\varphi)$ be a non-commutative $*$-probability space and $X,Y,Z,W\in A$  such that:
\begin{enumerate}[(a)]
\item the pairs $(X,Y)$ and $(Z,W)$ are both bi-R-diagonal,
\item the pairs $(X,Y)$ and $(Z,W)$ are $*$-bi-free.
\end{enumerate}
Then, the pair $(X+Z,Y+W)$ is also bi-R-diagonal.
\end{proposition}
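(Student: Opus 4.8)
The plan is to mimic the classical proof that a sum of $*$-free R-diagonal elements is R-diagonal (cf. \cite[Lecture 15]{nicaspeicher}), with free cumulants replaced by the bi-free cumulant functions and the vanishing of mixed free cumulants replaced by the combinatorial characterization of bi-free independence in Theorem \ref{equivbifree}. First I would fix $n\in\N$, $\chi\in\{l,r\}^n$ and elements $c_1,\ldots,c_n\in A$ with $c_i\in\{X+Z,(X+Z)^*\}$ whenever $\chi(i)=l$ and $c_i\in\{Y+W,(Y+W)^*\}$ whenever $\chi(i)=r$. For each $i$ write $c_i = e_i'+e_i''$, where $e_i'$ is the corresponding entry built from $X$ or $Y$ (so $c_i=(X+Z)^*$ gives $e_i'=X^*,\ e_i''=Z^*$, etc.) and $e_i''$ the one built from $Z$ or $W$. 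Note that $e_i'$ and $e_i''$ are both non-$*$-terms exactly when $c_i$ is, and both $*$-terms exactly when $c_i$ is; moreover $e_i'$ lies in $\alg(1_A\cup\{X,X^*\})$ or $\alg(1_A\cup\{Y,Y^*\})$ and $e_i''$ in $\alg(1_A\cup\{Z,Z^*\})$ or $\alg(1_A\cup\{W,W^*\})$, in a way compatible with the type prescribed by $\chi$.

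By multilinearity of $\kappa_\chi$,
\[
\kappa_\chi(c_1,\ldots,c_n)=\sum_{S\subseteq\{1,\ldots,n\}}\kappa_\chi(f_1^S,\ldots,f_n^S),\qquad f_i^S=\begin{cases}e_i' & i\in S\\ e_i'' & i\notin S.\end{cases}
\]
Given $S$, define $\epsilon:\{1,\ldots,n\}\to\{1,2\}$ by $\epsilon(i)=1$ if $i\in S$ and $\epsilon(i)=2$ otherwise. Since $(X,Y)$ and $(Z,W)$ are $*$-bi-free, the pairs of faces $(\alg(1_A\cup\{X,X^*\}),\alg(1_A\cup\{Y,Y^*\}))$ and $(\alg(1_A\cup\{Z,Z^*\}),\alg(1_A\cup\{W,W^*\}))$ form a bi-free family, and $f_i^S$ belongs to the left algebra of the $\epsilon(i)$-th pair when $\chi(i)=l$ and to its right algebra when $\chi(i)=r$. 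Hence Theorem \ref{equivbifree} forces $\kappa_\chi(f_1^S,\ldots,f_n^S)=0$ whenever $\epsilon$ is non-constant, i.e. unless $S=\emptyset$ or $S=\{1,\ldots,n\}$. Therefore
\[
\kappa_\chi(c_1,\ldots,c_n)=\kappa_\chi(e_1',\ldots,e_n')+\kappa_\chi(e_1'',\ldots,e_n'').
\]

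It remains to verify the two defining conditions of Definition \ref{birdiagdef} for $(X+Z,Y+W)$. If $n$ is odd, both summands vanish because $(X,Y)$ and $(Z,W)$ are bi-R-diagonal. If $n$ is even and $(c_{\sx(1)},\ldots,c_{\sx(n)})$ is not in one of the forms (a)--(e) of Definition \ref{birdiagdef}, then, since passing from $c_i$ to $e_i'$ (or to $e_i''$) preserves both the left/right type determined by $\chi$ and the $*$/non-$*$ pattern, the sequences $(e_{\sx(1)}',\ldots,e_{\sx(n)}')$ and $(e_{\sx(1)}'',\ldots,e_{\sx(n)}'')$ are likewise not in any of these forms, so again both summands vanish by bi-R-diagonality of $(X,Y)$ and $(Z,W)$. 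In all relevant cases $\kappa_\chi(c_1,\ldots,c_n)=0$, which is exactly what is required.

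The only delicate point I expect is the bookkeeping in the last paragraph: one must check that membership in the list of admissible alternating forms (a)--(e) depends only on $\chi$ and on which entries carry a $*$, and hence transfers verbatim from the $c$-sequence to the $e'$- and $e''$-sequences. This is a purely syntactic verification once that observation is made.
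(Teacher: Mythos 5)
Your proof is correct and follows essentially the same route as the paper: multilinearity of $\kappa_\chi$ plus the vanishing of mixed bi-free cumulants (Theorem \ref{equivbifree}) reduces the cumulant of the sums to the two "pure" terms, and the alternating/$*$-pattern transfers verbatim from the sum-sequence to each of the component sequences. The paper's proof is just a more compressed version of the same argument.
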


\begin{proof}
Let $n\in\N,\chi\in {\{l,r\}}^n$ and $a_1,\ldots,a_n\in A$ such that 
\[
a_i\in \begin{cases}
       \{X+Z,X^*+Z^*\}, &\text{ if }  \chi (i)=l\vspace{5pt}\\
        \{Y+W,Y^*+W^*\}, &\text{ if }  \chi (i) = r\\   
     \end{cases}\quad (i=1,\ldots,n).
\]
Define $b_1,\ldots,b_n\in\left\{X,X^*,Y,Y^*\right\}$ and $ c_1,\ldots,c_n\in\left\{Z,Z^*,W,W^*\right\}$ such that $a_i=b_i+c_i$ for all $i=1,\ldots,n$ (for instance, if $a_i=X+Z$, then $b_i=X$ and  $c_i=Z$). 
The multi-linearity of the bi-free cumulants combined with the $*$-bi-free independence of the pairs $(X,Y)$ and $(Z,W)$  imply that
\[
{\kappa}_{\chi}(a_1,\ldots,a_n) = {\kappa}_{\chi}(b_1,\ldots,b_n) + {\kappa}_{\chi}(c_1,\ldots,c_n).
\]
The conclusion of the proposition follows from the observation that the sequence
\[
\left(a_{\sx(1)},\ldots,a_{\sx(n)}\right),
\]
is $*$-alternating  if and only if both  the sequences
\[
\left(b_{\sx(1)},\ldots,b_{\sx(n)}\right)\text{ and }\left(c_{\sx(1)},\ldots,c_{\sx(n)}\right),
\]
are also $*$-alternating.
\end{proof}

With the previous proof in mind, it is easy to see that if exactly one of the $*$-bi-free pairs $(X,Y)$ and $(Z,W)$ is bi-R-diagonal, then the pair $(X+Z,Y+W)$ cannot be bi-R-diagonal.

We now proceed to study various cases on products involving bi-R-diagonal pairs. The products of pairs will be considered pointwise, with the condition that the order of the right operators is reversed being necessary for the results concerning the more general cases (see Theorem \ref{prodbir-bifree} below and also Proposition \ref{prod-bi-even}). The proofs of these results will require more delicate arguments when compared to the cases of sums involving bi-R-diagonal pairs and, for this, the formula for bi-free cumulants with products of operators as arguments will play a key role. The next theorem states that the product of a bi-R-diagonal pair of operators by any $*$-bi-free pair is also bi-R-diagonal and  exhibits the fact that bi-R-diagonal pairs exist in abundance.

\begin{theorem}\label{prodbir-bifree}
Let $(A,\varphi)$ be a non-commutative $*$-probability space and let $X,Y,Z,W\in A$ such that:
\begin{enumerate}[(a)]
\item the pair $(X,Y)$ is bi-R-diagonal,
\item the pairs $(X,Y)$ and $(Z,W)$ are $*$-bi-free.
\end{enumerate}
Then, the pair $(XZ,WY)$ is also bi-R-diagonal.
\end{theorem}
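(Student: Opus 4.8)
The plan is to verify Definition \ref{birdiagdef} directly for the pair $(XZ, WY)$ by expanding each bi-free cumulant of $XZ$, $(XZ)^* = Z^*X^*$, $WY$, and $(WY)^* = Y^*W^*$ via Theorem \ref{openup}, and then using the $*$-bi-free independence hypothesis together with the bi-R-diagonality of $(X,Y)$ to kill the unwanted terms. Fix $n\in\N$, $\chi\in\{l,r\}^n$ and a choice of arguments $a_1,\ldots,a_n$ with $a_i\in\{XZ,Z^*X^*\}$ when $\chi(i)=l$ and $a_i\in\{WY,Y^*W^*\}$ when $\chi(i)=r$; I must show $\kappa_\chi(a_1,\ldots,a_n)=0$ unless $n$ is even and the sequence $(a_{\sx(1)},\ldots,a_{\sx(n)})$ has one of the alternating forms (a)--(e). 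Each $a_i$ is a product of two operators, one coming from the bi-R-diagonal pair $(X,Y)$ and one from $(Z,W)$; the order within the product is such that, reading in the $\chi$-order (left operators left-to-right, right operators right-to-left), the letter adjacent to the ``interior'' is always the $(X,Y)$-letter. Concretely, $XZ$ contributes the block $(X,Z)$ and $Z^*X^*$ contributes $(Z^*,X^*)$ for a left slot, while $WY$ contributes $(W,Y)$ and $Y^*W^*$ contributes $(Y^*,W^*)$ for a right slot; because right operators are read in reverse $\chi$-order, in both the left and right cases the $(X,Y)$-letter ends up on the inner side after reshuffling — this is exactly why the product must be taken as $(XZ, WY)$ rather than $(XZ, YW)$.

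Applying Theorem \ref{openup} with $m=n$, $k(p)=2p$, and the doubled map $\xh\in\{l,r\}^{2n}$ satisfying $\xh(2i-1)=\xh(2i)=\chi(i)$, I get
\[
\kappa_\chi(a_1,\ldots,a_n)=\sum_{\substack{\tau\in\BNC(\xh)\\ \tau\vee\oxh=1_{\xh}}}\kappa_{\xh,\tau}(e_1,\ldots,e_{2n}),
\]
where $\oxh=\{\{2i-1,2i\}:i=1,\ldots,n\}$ and $(e_{2i-1},e_{2i})$ is the factorization of $a_i$ recorded above, so that each $e_j$ lies in $\{X,X^*,Y,Y^*\}$ or in $\{Z,Z^*,W,W^*\}$. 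Now $\kappa_{\xh,\tau}$ factors over the blocks of $\tau$, and by Theorem \ref{equivbifree} (bi-freeness $\Rightarrow$ mixed cumulants vanish) any block of $\tau$ containing both an $(X,Y)$-letter and a $(Z,W)$-letter forces $\kappa_{\xh,\tau}=0$. Hence only partitions $\tau$ whose every block is \emph{pure} — entirely inside the ``$(X,Y)$-indices'' $S_1$ or entirely inside the ``$(Z,W)$-indices'' $S_2$ — survive; for such $\tau$, $\kappa_{\xh,\tau}$ splits as a bi-free cumulant of $(X,Y)$-letters on $S_1$ times one of $(Z,W)$-letters on $S_2$. The condition $\tau\vee\oxh=1_{\xh}$, combined with purity and the fact that each pair $\{2i-1,2i\}$ straddles $S_1$ and $S_2$, forces $\tau|_{S_1}$ and $\tau|_{S_2}$ to each be connected ``enough'' that Proposition \ref{evenblocks} applies to the $(X,Y)$-part: every surviving $\tau$ restricted to $S_1$ has all blocks of even size, and the $\chi$-order-consecutive indices of $S_1$ are linked cyclically. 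That is precisely the combinatorial situation in which bi-R-diagonality of $(X,Y)$ pins down the pattern of $(X,Y)$-letters read in $\xh|_{S_1}$-order to be $*$-alternating, and moreover (by forms (a)--(e) of Definition \ref{birdiagdef}) with all $X$-letters preceding all $Y$-letters; translating back through the reversed-order bookkeeping of $WY$, this says exactly that $(a_{\sx(1)},\ldots,a_{\sx(n)})$ is $*$-alternating with $XZ$-terms followed by $WY$-terms, i.e. one of the admissible shapes for $(XZ,WY)$. Conversely, if $(a_{\sx(1)},\ldots,a_{\sx(n)})$ is \emph{not} of that form, then for \emph{every} pure $\tau$ with $\tau\vee\oxh=1_{\xh}$ the $(X,Y)$-factor is a vanishing cumulant of the bi-R-diagonal pair, so the whole sum is $0$; and if $n$ is odd the $(X,Y)$-part has odd total size and again vanishes. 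This yields both clauses (i) and (ii) of Definition \ref{birdiagdef} for $(XZ,WY)$.

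The main obstacle is the bookkeeping in the middle step: one must show carefully that $\tau\vee\oxh=1_{\xh}$ together with block-purity really does force the $S_1$-restriction into the hypotheses of Proposition \ref{evenblocks} (even blocks, consecutive $\chi$-order linking) — this uses that the ``doubling'' partition $\oxh$ has exactly one foot in $S_1$ and one in $S_2$ per pair, so that any coarsening to $1_{\xh}$ must be achieved through $\tau$ alone, and a parity/non-crossing argument (essentially the one internal to Proposition \ref{evenblocks}'s proof) then rules out odd blocks on each side. The other delicate point is purely notational: keeping straight, under the reversal of right operators and the passage between $\chi$-order on $\{1,\ldots,n\}$ and $\xh$-order on $\{1,\ldots,2n\}$, that the five admissible shapes (a)--(e) for $(X,Y)$ correspond bijectively to the five admissible shapes for $(XZ,WY)$. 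Both are routine once set up, but the indexing must be done with care, which is presumably why the authors remark that ``more care is required'' in the bi-free case.
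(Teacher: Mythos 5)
There is a genuine gap, and it sits exactly at the step you describe as ``the main obstacle.'' Your setup (expanding via Theorem \ref{openup}, discarding partitions with mixed blocks by $*$-bi-freeness, and killing the odd-$n$ case because the $(X,Y)$-letters then force an odd-size block of the bi-R-diagonal pair) matches the paper. But your central claim --- that for surviving partitions a ``parity/non-crossing argument \ldots rules out odd blocks on each side,'' so that Proposition \ref{evenblocks} applies --- is false. Odd blocks on the $(Z,W)$-side cannot be ruled out: $(Z,W)$ is an arbitrary pair, not assumed bi-R-diagonal or even $*$-bi-even, so e.g.\ a singleton block $\{j\}$ with $c_j=Z$ contributes $\kappa(Z)=\varphi(Z)$, which need not vanish. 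The paper explicitly flags this in the remark immediately after the proof: Proposition \ref{evenblocks} does not apply precisely because odd-order cumulants in $\{Z,Z^*,W,W^*\}$ need not vanish. Worse, if the conclusion of Proposition \ref{evenblocks} did hold for the surviving $\tau$, it would say $\sxh(2i)\sim_\tau\sxh(2i+1)$ and $\sxh(1)\sim_\tau\sxh(2n)$; in the present bookkeeping each such pair consists of one $(X,Y)$-index and one $(Z,W)$-index (odd $\xh$-positions carry $(X,Y)$-letters, even ones carry $(Z,W)$-letters), so by purity no surviving $\tau$ could satisfy it, and you would wrongly conclude that \emph{every} cumulant of $(XZ,WY)$ vanishes.

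What is actually needed in the even, non-alternating case is the paper's more delicate hands-on argument: locate the block $V$ of $\tau$ containing a specific offending index, use bi-R-diagonality of $(X,Y)$ to force the $\xh$-extremes (or the nearest neighbour in $V$) to be $*$-terms of a particular kind, deduce from this the parity of the position $q$ of that neighbour, and then exhibit an explicit two-block partition $\lambda=\{\widetilde V,\widetilde V^{\mathsf c}\}$ with $\tau,\oxh\leq\lambda\lneq 1_{\xh}$, contradicting $\tau\vee\oxh=1_{\xh}$. None of this is supplied by, or reducible to, Proposition \ref{evenblocks}, so your proposal as written does not close the key case.
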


\begin{proof}
Let $n\in\N$, $\chi\in\{l,r\}^n$ and $a_1,\ldots,a_n\in A$ be such that 
\[
a_i\in \begin{cases}
       \{XZ,Z^*X^*\}, &\text{if } \chi (i)=l\\
        \{WY,Y^*W^*\}, &\text{if }  \chi (i) = r\\   
     \end{cases}\quad  (i=1,\ldots,n).
\]
Also let $\xh\in\{l,r\}^{2n},\oxh\in\BNC(\xh)$ and $c_1,\ldots,c_{2n}\in A$ be as in  Notation \ref{notation: products}, so that by Theorem \ref{openup} we have
\[
\kappa_\chi(a_1,\ldots,a_n) =  \sum_{\substack{\tau\in \BNC(\xh)\\
\tau\vee\oxh = 1_{\xh}\\}}
        \kappa_{\xh,\tau}(c_1,\ldots,c_{2n})=  \sum_{\substack{\tau\in \BNC(\xh)\\
\tau\vee\oxh = 1_{\xh}\\}}
\prod_{V\in\tau}       \kappa_{{\xh}|_V}\left({(c_1,\ldots,c_{2n})}|_{V}\right).
\]
To start, we make a preparatory observation:  if $i\in\{1,\ldots,n\}$ is such that $a_{\sx (i)}=XZ$, then $c_{\sxh (2i-1)}=X$ and $c_{\sxh (2i)}=Z$ (a similar situation occurs when $a_{\sx (i)}=Z^*X^*$, since this corresponds to a left operator). On the other hand, if $a_{\sx (i)}=WY$, then $c_{\sxh (2i-1)}=Y$ and $c_{\sxh (2i)}=W$ (and a similar situation occurs when $a_{\sx (i)}= Y^*W^*$ since this corresponds to a right operator). Note that in the latter case, the  right operators must appear reversed in the $\xh$-order. In particular, this implies for $1\leq q\leq 2n$ that  $c_{\sxh(q)}\in\{X,Z^*,Y,W^*\}$ if and only if $q$ is  an odd index, while $c_{\sxh(q)}\in\{Z,X^*,W,Y^*\}$ if and only if  $q$ is even.

If a bi-non-crossing partition $\tau\in\BNC(\xh)$ yields a non-zero contribution to the sum of cumulants above, then $\tau$ satisfies $\tau\vee\oxh=1_{\xh}$, therefore the conclusion of Lemma \ref{lemma: no block coupling} holds for $\tau$. 
Secondly, since the pair $(X,Y)$ is bi-R-diagonal and is $*$-bi-free from the pair $(Z,W)$, Proposition \ref{prop: reduction properties} 
implies that for any block $V$ of the contributing partition $\tau$
we have that either $\{c_i:i\in V\}\subseteq \{X,X^*,Y,Y^*\}$ or $\{c_i:i\in V\}\subseteq \{Z,Z^*,W,W^*\}$ and, moreover, if  $\{c_i:i\in V\}\subseteq\{X,X^*,Y,Y^*\}$, then as the bi-free cumulant $\kappa_{\xh|_V}((c_1,\ldots,c_{2n})|_V)$ has to be non-zero and has entries in the bi-R-diagonal pair $(X,Y)$, we deduce that
$V$ must have even cardinality and the sequence  $(c_1,\ldots,c_{2n})|_V$ must be $*$-alternating  when read with the indices in the $\xh|_V$-order. This implies that if $n$ is odd, then $\kappa_\chi (a_1\ldots,a_n)=0$, as then the cardinality of the set 
\[
\{j\in\{1,\ldots,2n\} : c_j\in \{X,X^*,Y,Y^*\} \},
\]
is odd and hence for any contributing partition $\tau\in \BNC(\xh)$ there exists $V\in\tau$ with odd cardinality that contains indices corresponding to elements in $\{X,X^*,Y,Y^*\}$.

Going forward, we  assume that $n$ is even. We must show that the cumulant $\kappa_{\chi}(a_1,\ldots,a_n)$ vanishes if the sequence $(a_{\sx(1)},\ldots,a_{\sx(n)})$ is not $*$-alternating. Suppose the following situation occurs:
\[
\left(a_{\sx(1)},\ldots,a_{\sx(n)}\right) = (\ldots, XZ, XZ,\ldots),
\]
with $a_{\sx(m)} = a_{\sx(m+1)} = XZ$ for some $m\in\{1,\ldots,n-1\}$. This implies the following situation for the $\xh$-order:
\[
\left(c_{\sxh(1)},\ldots,c_{\sxh(2n)}\right) = (\ldots,X,Z,X,Z,\ldots),
\]
with $c_{\sxh(2m-1)} = c_{\sxh(2m+1)} = X$ and $c_{\sxh(2m)} = c_{\sxh(2m+2)} = Z$. For a contributing partition $\tau\in\BNC(\xh)$, let $V\in\tau$ be such that $\sxh(2m+1)\in V$. To start, consider the case when $\sxh(2m+1)={\min}_{{\prec}_{\xh}}V$, so that $\sxh(2m+1)$ is the first index that appears in $V$ in the $\xh$-order. If $q\in \{1,\ldots,2n\} $ is such that $\sxh(q)={\max}_{\prec_{\xh}}V$, 
 then since the sequence $(c_1,\ldots,c_{2n})|_V$ when read in the induced $\xh|_V$-order is of the form $(X,\ldots,c_{\sxh(q)})$ and must be $*$-alternating,  its last term  corresponds to a $*$-term, i.e. $c_{\sxh(q)}\in\{X^*,Y^*\}$. But, as noted above, this implies that $q$ must be an even number, so that the $\xh$-interval $\{\sxh(j):2m+1\leq j\leq q\}$ between the minimum and maximum indices of $V$ (in the $\xh$-order)  is written both as a union of blocks  $\oxh$. The next diagram illustrates this case with the sequence $(c_1,\ldots,c_{2n})$ read in the $\xh$-order.
\begin{equation*}
\begin{tikzpicture}[baseline]
			\draw[thick,dashed] (2,0) -- (12.9,0);
          \filldraw (2.4,-0.3) circle (0.02) node[anchor=north] {};
           \filldraw (2.9,-0.3) circle (0.02) node[anchor=north] {};
            \filldraw (3.4,-0.3) circle (0.02) node[anchor=north] {};
             \filldraw (3.9,0) circle (0.06) node[anchor=north] {$X$};
             \filldraw (4.9,0) circle (0.06) node[anchor=north] {$Z$};
              \filldraw (5.9,0) circle (0.06) node[anchor=north] {$X$};
               \filldraw (6.9,0) circle (0.06) node[anchor=north] {$Z$};
              \filldraw (7.6,-0.3) circle (0.02) node[anchor=north] {};
              \filldraw (8.1,-0.3) circle (0.02) node[anchor=north] {};
              \filldraw (8.6,-0.3) circle (0.02) node[anchor=north] {};
              \filldraw (9.1,-0.3) circle (0.02) node[anchor=north] {};
              \filldraw (9.9,0) circle (0.06) node[anchor=north] {$W^*$};
              \filldraw (10.9,0) circle (0.06) node[anchor=north] {$Y^*$};
              \filldraw (11.4,-0.3) circle (0.02) node[anchor=north] {};
              \filldraw (11.9,-0.3) circle (0.02) node[anchor=north] {};
              \filldraw (12.4,-0.3) circle (0.02) node[anchor=north] {};
 \draw[-latex,line width=1pt] (5.9,-1.3)--(5.9,-0.5);
  \node[draw] at (5.9,-1.6) {${\min}_{{\prec}_{\xh}}V$};
   \draw[-latex,line width=1pt] (10.9,-1.3)--(10.9,-0.5);
  \node[draw] at (10.9,-1.6) {${\max}_{{\prec}_{\xh}}V$};
\draw[thick, black] (5.9,-2.4) -- (5.9,-2);
\draw[thick, black] (10.9,-2.4) -- (10.9,-2);
 \draw[-latex,line width=1pt] (6.8,-2.2)--(5.9,-2.2);
  \draw[-latex,line width=1pt] (9.7,-2.2)--(10.9,-2.2);
  \draw[thick,dashed, line width=1pt] (6.8,-2.2) -- (8,-2.2);
  \draw[thick,dashed, line width=1pt] (8.6,-2.2) -- (9.7,-2.2);
 \node[draw] at (8.3,-2.2) {$V$};
 \draw [thick, black,decorate,decoration={brace,amplitude=10pt,mirror},xshift=0.4pt,yshift=-0.4pt](5.87,-2.5) -- (10.93,-2.5) node[black,midway,yshift=-0.6cm] {\footnotesize union of blocks of $\tau$ and union of blocks of $\oxh$};
\draw[thick, black] (5.9,0) -- (5.9,1) -- (10.9,1) -- (10.9,0);	
	
			\end{tikzpicture}
\end{equation*}
This contradicts the conclusion of Lemma \ref{lemma: no block coupling}, which shows that the case   $\sxh(2m+1)=\min_{\prec_{\xh}}V$ is impossible. As a result, we find that there exists $q<2m+1$ such that the index $\sxh(q)$ is in $V$. Additionally, we may  assume that   $\sxh(q)$ and $\sxh(2m+1)$ are consecutive indices of $V$ in the $\xh$-order. Since the sequence $(c_1,\ldots,c_{2n})|_V$  when read in the $\xh|_V$-order is of the form $(\ldots, c_{\sxh(q)},X,\ldots)$ and must also be $*$-alternating, it is necessarily the case that $c_{\sxh (q)}=X^*$ (note that any operator that appears before any left operator in the $\xh|_V$-order must also be a left operator). This implies
\[
\left(c_{\sxh(1)},\ldots,c_{\sxh(2n)}\right) = (\ldots,Z^*,X^*,\ldots,X,Z,X,Z,\ldots),
\]
from which we again see that $q$ is even. But this again contradicts Lemma \ref{lemma: no block coupling}, since then the $\xh$-interval $\{\sxh(j):q+1\leq j\leq 2m\}$ between the consecutive indices $\sxh(q)$ and $\sxh(2m+1)$ of $V$  is non-empty and is written both as a union of blocks  $\oxh$.
This case is illustrated in the next diagram, with the indices in the $\xh$-order.
\begin{equation*}
\begin{tikzpicture}[baseline]
			\draw[thick,dashed] (2,0) -- (12.9,0);

          \filldraw (2.6,-0.3) circle (0.02) node[anchor=north] {};
           \filldraw (3.1,-0.3) circle (0.02) node[anchor=north] {};
            \filldraw (3.6,-0.3) circle (0.02) node[anchor=north] {};
            \filldraw (4.2,0) circle (0.06) node[anchor=north] {$Z^*$};
             \filldraw (5.4,0) circle (0.06) node[anchor=north] {$X^*$};
             \filldraw (5.85,-0.3) circle (0.02) node[anchor=north] {};
             \filldraw (6.35,-0.3) circle (0.02) node[anchor=north] {};
             \filldraw (6.85,-0.3) circle (0.02) node[anchor=north] {};
              \filldraw (7.2,0) circle (0.06) node[anchor=north] {$X$};
                          \filldraw (8.4,0) circle (0.06) node[anchor=north] {$Z$};
                          \filldraw (9.6,0) circle (0.06) node[anchor=north] {$X$};
                          \filldraw (10.8,0) circle (0.06) node[anchor=north] {$Z$};
                          \filldraw (11.4,-0.3) circle (0.02) node[anchor=north] {};
                          \filldraw (11.9,-0.3) circle (0.02) node[anchor=north] {};
                          \filldraw (12.4,-0.3) circle (0.02) node[anchor=north] {};
\draw [thick, black,decorate,decoration={brace,amplitude=10pt},xshift=0.4pt,yshift=-0.4pt](5.9,0.2) -- (8.8,0.2) node[black,above=28pt, midway,yshift=-0.6cm,align=left] {\footnotesize union of blocks of $\tau$ and \\ \footnotesize union of blocks of $\oxh$};
          
\draw[thick, black] (5.4,0) -- (5.4,1.6) -- (9.6,1.6) -- (9.6,0);	
\draw[thick, black] (9.6,1.6) -- (10.2,1.6);
\draw[thick, black] (4.8,1.6) -- (5.4,1.6);
\draw[thick, black, dashed] (3.7,1.6) -- (4.8,1.6);
\draw[thick, black, dashed] (10.2,1.6) -- (11.3,1.6);
\node[draw] at (5.4,-1.5) {$c_{\sxh(q)}, q\text{ even}$};
\draw[-latex,line width=1pt] (5.4,-1.25)--(5.4,-0.5);
\node[draw] at (9.6,-1.5) {$c_{\sxh(2m+1)}$};
\draw[-latex,line width=1pt] (9.6,-1.25)--(9.6,-0.5);
			\end{tikzpicture}
			\end{equation*}
 As a result, when
\[
\left(a_{\sx(1)},\ldots,a_{\sx(n)}\right) = (\ldots,XZ,XZ,\ldots),
\]  
for any $\tau\in\BNC(\xh)$ we either have that $\kappa_{\xh,\tau}(c_1,\ldots,c_{2n})=0$ or the partition $\tau$ does not satisfy the relation $\tau\vee\oxh=1_{\xh}$. Thus
the bi-free cumulant ${\kappa}_{\chi}(a_1,\ldots,a_n)$ vanishes and  similar arguments show that this is also the case when the sequence $\left(a_{\sx(1)},\ldots,a_{\sx(n)}\right)$ is in any one of the forms
\[
(\ldots,XZ,WY,\ldots)  \text{ or } (\ldots,WY,WY,\ldots).
\]
The remaining cases for when the sequence $\left(a_{\sx(1)},\ldots,a_{\sx(2n)}\right)$ is not $*$-alternating are
\[
\left(\ldots, Y^*W^*, Y^*W^*,\ldots\right),  \left(\ldots,Z^*X^*,Z^*X^*,\ldots\right) \text{ or } \left(\ldots,Z^*X^*,Y^*W^*,\ldots\right),
\]
and here we briefly only sketch the differences in the corresponding arguments.
If 
\[
\left(a_{\sx(1)},\ldots,a_{\sx(n)}) = (\ldots, Y^*W^*, Y^*W^*,\ldots\right),
\]
with $a_{\sx(m)} = a_{\sx(m+1)} = Y^*W^*$ for some $m\in\{1,\ldots,n-1\}$, then in the $\xh$-order we have
\[
\left(c_{\sxh(1)},\ldots,c_{\sxh(2n)}\right) = (\ldots,W^*,Y^*,W^*,Y^*,\ldots),
\]
with $c_{\sxh(2m-1)} = c_{\sxh(2m+1)} = W^*$ and $c_{\sxh(2m)} = c_{\sxh(2m+2)} = Y^*$.  For  a contributing partition $\tau\in\BNC(\xh)$, if $V\in\tau$ is such that $\sxh(2m)\in V$, this time we note that the case  $\sxh(2m)={\max}_{\prec_{\xh}}V$ is impossible, since otherwise if $\sxh(q)$ denotes the minimum index of $V$ in the $\xh$-order, then $c_{\sxh(q)}$ must be an operator in $\{X,Y\}$ due to the $*$-alternating condition, thus $q$ is odd, which contradicts Lemma \ref{lemma: no block coupling}. Then there exists an index in $V$ greater than $\sxh(2m)$ in the $\xh$-order, and in this case using again Lemma \ref{lemma: no block coupling}  we arrive at a contradiction as done prior.
\end{proof}
The main technical difficulty that necessitates the distinction between several different cases in the previous proof is that we cannot only deal with bi-non-crossing partitions whose blocks contain an even number of elements, thus Proposition \ref{evenblocks} does not apply. This is because the pair $(Z,W)$ need not be bi-R-diagonal and hence bi-free cumulants of odd order with entries in $\{Z,Z^*,W,W^*\}$ need not necessarily vanish.  We remark that for two $*$-bi-free pairs $(X,Y)$ and $(Z,W)$ with the first being bi-R-diagonal, it is not in general true that the pair $(XZ,YW)$ will also be bi-R-diagonal, as the following example indicates. We will denote by $\tr$ the normalized trace on any matrix algebra.

\begin{example}
Let $(A,\varphi)$ be a $\mathrm{C}^*$-probability space and $(u_l,u_r)$ be a bi-Haar unitary pair in $A$. Also, consider the pair $(Z,W)$ in $({\mathcal{M}}_2(\mathbb{C}),\tr)$ given as follows:
\[
Z = \begin{bmatrix}
1 & 0\\ 0 &0
\end{bmatrix} \text{ and } W = \begin{bmatrix}
0 & 0\\ 1 &0
\end{bmatrix}.
\]
We may find a larger  $\mathrm{C}^*$-probability space $(B,\psi)$ such that the pairs $(u_l,u_r)$ and $(Z,W)$ are $*$-bi-free in $(B,\psi)$  and such that $\psi$ preserves the joint $*$-distributions of the pairs $(u_l,u_r)$ and $(Z,W)$ with respect to $\varphi$ and $\tr$ respectively (see Remark \ref{remark:propertiesofbi-freepairs}). We claim that for $\chi=\{l,l,r,r\}$ the bi-free cumulant $\kappa_{\chi}(Z^*{u_l}^*,{u_l}Z,W^*{u_r}^*,{u_r}W)$ does not vanish, even though it is not $*$-alternating in the  $\chi$-order, which implies that the pair $(u_l Z,u_r W)$ is not bi-R-diagonal. Indeed, the moment-cumulant formula yields
\[
\kappa_{\chi}(Z^*{u_l}^*,{u_l}Z,W^*{u_r}^*,{u_r}W)= \sum_{\tau\in\BNC(\chi)}\psi_\tau(Z^*{u_l}^*,{u_l}Z,W^*{u_r}^*,{u_r}W)\mu_{\BNC}(\tau,1_{\chi}).
\]
Since the sets $\{u_l, u_l^*\}$ and $\{Z,Z^*\}$ are freely independent and the sets $\{u_r,u_r^*\}$ and $\{W,W^*\}$ are also freely independent, using the characterization of free independence in terms of moments  it is seen that all operators that appear in the cumulant above are centered, i.e. the following holds
\[
\psi(Z^*u_l^*)=\psi(W^*u_r^*)=\psi(u_l Z)=\psi(u_r W)=0.
\]
Hence, to find bi-non-crossing partitions that  yield a non-zero contribution to the sum above, we may only consider bi-non-crossing partitions  all of whose blocks are not singletons. These are the following three bi-non-crossing partitions:
\[
\tau_1=\{\{1,2\},\{3,4\}\},\tau_2=\{\{1,2,3,4\}\}=1_\chi\text{ and }\tau_3=\{\{1,3\},\{2,4\}\}.
\]
For $\tau_1,$ we have that
\begin{align*}
\psi_{\tau_1}(Z^*u_l^*,u_lZ,W^*u_r^*,u_rW)
&=\psi(Z^*u_l^*u_lZ)\cdot \psi(W^*u_r^*u_rW)\\
&=\tr(Z^*Z)\cdot\tr(W^*W)=\frac{1}{4},
\end{align*}
while for $\tau_2$ we obtain
\begin{align*}
\psi_{\tau_2}(Z^*u_l^*,u_lZ,W^*u_r^*,u_rW)
&=\psi(Z^*u_l^*u_lZW^*u_r^*u_rW)\\
&=\tr(Z^*ZW^*W)=\frac{1}{2}.
\end{align*}
For the case of $\tau_3$, it follows that
\[
\psi_{\tau_3}(Z^*u_l^*,u_lZ,W^*u_r^*,u_rW)
=\psi(Z^*u_l^*W^*u_r^*)\cdot \psi(u_lZu_rW).
\]
To show that the product of moments  above is equal to zero we will use the moment condition supplied by \cite[Theorem 7]{C2019}. Since the pairs $(u_l,u_r)$ and $(Z,W)$ are $*$-bi-free, they must satisfy the  vanishing alternating centered $\chi$-interval Eigenshaft (abbreviated as ``vaccine''), which means that whenever $n\in\N, \chi\in\{l,r\}^n,\epsilon\in\{1,2\}^n, b_1,\ldots, b_n\in B$ are such that:
\begin{enumerate}[(a)]
\item $b_i\in\{u_l, u_l^*, Z, Z^*\}$ if $\chi(i)=l$ and $b_i\in\{u_r,u_r^*, W, W^*\}$ if $\chi(i)=r$,
\item $b_i\in \{u_l, u_l^*, u_r, u_r^*\}$ if $\epsilon(i)=1$ and $b_i\in\{Z, Z^*, W, W^*\}$ if $\epsilon(i)=2$,
    \item $\psi(b_{i_1}\cdots b_{i_k})=0$ whenever $\{i_1<\ldots <i_k\}$ is a maximal $\epsilon$-monochromatic $\chi$-interval,
\end{enumerate}
it follows that $\psi(b_1\cdots b_n)=0$. For $\chi\in\{l,r\}^4$ with $\chi^{-1}(\{l\})=\{1,2\}$,  $\epsilon\in\{1,2\}^4$ with $\epsilon^{-1}(1)=\{1,3\}$ and $b_1=u_l, b_2=Z, b_3=u_r, b_4=W$, note that the maximal $\epsilon$-monochromatic $\chi$-intervals are $\{1\}, \{3\}, \{2,4\}$ as shown in the following diagram:
\begin{equation*}
\begin{tikzpicture}[baseline]
			\draw[thick,dashed] (-.5,2.4) -- (-.5,-.25) -- (1.5,-.25) -- (1.5,2.4);
\node[left] at (-.5,2.3) {$\raisebox{.5pt}{\textcircled{\raisebox{-.9pt} {1}}}\quad u_l$};
			\draw[black,fill=black] (-.5,2.3) circle (0.05);
\node[left] at (-.5,1.6) {$\raisebox{.5pt}{\textcircled{\raisebox{-.9pt} {2}}}\quad Z$};
			\draw[black,fill=black] (-.5,1.6) circle (0.05);
\node[right] at (1.5,0.95) {$u_r\quad\raisebox{.5pt}{\textcircled{\raisebox{-.9pt} {1}}}$};
			\draw[black,fill=black] (1.5,0.95) circle (0.05);
\node[right] at (1.5,0.2) {$W\quad\raisebox{.5pt}{\textcircled{\raisebox{-.9pt} {2}}}$};
			\draw[black,fill=black] (1.5,0.2) circle (0.05);
			\draw[thick, red] (-1.6,1.61) -- (-2,1.61) -- (-2, -0.5) -- (3.05,-0.5)--(3.05,0.25)--(2.73,0.25);
   \draw[thick, blue] (-1.65,2.31) -- (-2.05,2.31);
   \draw[thick, blue] (2.7,0.97) -- (3.1,0.97);
	\end{tikzpicture}
 \end{equation*}
Since we have  
\[
\psi(b_1)=\varphi(u_l)=0, \psi(b_3)=\varphi(u_r)=0\text{ and }\psi(b_2\cdot b_4)=\tr (ZW)=0,
\]
it follows that $\psi(b_1 b_2 b_3 b_4)=\psi(u_l Z u_r W)=0$ and therefore
\[
\psi_{\tau_3}(Z^* u_l^*, u_l Z, W^* u_r^*, u_r W)=0.
\]
Since
\[
\mu_{\BNC}(\tau_1,1_{\chi})=\mu_{\BNC}(\tau_3,1_{\chi})=-1 \text{ and }\mu_{\BNC}(\tau_2,1_{\chi})=1,
\]
the bi-free cumulant is evaluated as follows
\[
\pushQED{\qed}
\kappa_{\chi}(Z^*{u_l}^*,{u_l}Z,W^*{u_r}^*,{u_r}W)=\frac{1}{2}-\frac{1}{4}-0=\frac{1}{4}\neq 0.\qedhere
\popQED
\]
\end{example}
However, when the pairs $(X,Y)$ and $(Z,W)$ are \emph{both} bi-R-diagonal and $*$-bi-free, then it is the case that the resulting pair $(XZ,YW)$ is also bi-R-diagonal, as the following proposition shows.

\begin{proposition}\label{prodbir-bir}
Let $(A,\varphi)$ be a non-commutative $*$-probability space and  $X,Y,Z,W\in A$ such that:
\begin{enumerate}[(a)]
\item the pairs $(X,Y)$ and $(Z,W)$ are both bi-R-diagonal,
\item the pairs $(X,Y)$ and $(Z,W)$ are $*$-bi-free.
\end{enumerate}
Then, the pair $(XZ,YW)$ is also bi-R-diagonal.
\end{proposition}

\begin{proof}
Let $n\in\N$, $\chi\in\{l,r\}^n$ and $a_1,\ldots,a_n\in A$ such that 
\[
a_i\in \begin{cases}
       \{XZ,Z^*X^*\}, &\text{if }  \chi (i)=l\\
        \{YW,W^*Y^*\}, &\text{if }  \chi (i) = r\\   
     \end{cases}\quad (i=1,\ldots,n).
\]
Also let $\xh\in\{l,r\}^{2n},\oxh\in\BNC(\xh)$ and $c_1,\ldots,c_{2n}\in A$ be as in  Notation \ref{notation: products}, so that by Theorem \ref{openup} we have
\[
\kappa_\chi(a_1,\ldots,a_n) =  \sum_{\substack{\tau\in \BNC(\xh)\\
\tau\vee\oxh = 1_{\xh}\\}}
        \kappa_{\xh,\tau}(c_1,\ldots,c_{2n})=  \sum_{\substack{\tau\in \BNC(\xh)\\
\tau\vee\oxh = 1_{\xh}\\}}
\prod_{V\in\tau}       \kappa_{{\xh}|_V}\left({(c_1,\ldots,c_{2n})}|_{V}\right).
\]
If the partition $\tau\in\BNC(\xh)$ yields a non-zero contribution to the sum of cumulants above, then as in the proof of Theorem \ref{prodbir-bifree},  we may assume  that for every $V\in\tau$, either $\{c_i:i\in V\}\subseteq \{X,X^*,Y,Y^*\}$
or  $\{c_i:i\in V\}\subseteq \{Z,Z^*,W,W^*\}$ and that if $n$ is odd, then $\kappa_\chi (a_1\ldots,a_n)=0$. In addition, since both pairs $(X,Y)$ and $(Z,W)$ are bi-R-diagonal, by Proposition \ref{prop: reduction properties} we may assume that every block $V$  of  $\tau$ contains   an even number of elements and the sequence $(c_1,\ldots,c_{2n})|_V$ is $*$-alternating  when read with the indices in the $\xh|_V$-order. 
Hence,  Proposition \ref{evenblocks} shows that $\sxh (1) {\sim}_{\tau} \sxh (2n)$ and $\sxh (2i) {\sim}_{\tau}\sxh(2i+1)$ for every $i=1,\ldots,n-1$.

We will now show that 
if the sequence $(a_{\sx(1)},\ldots,a_{\sx(n)})$
is not $*$-alternating, then  the cumulant ${\kappa}_{\chi}(a_1,\ldots,a_n)$ must vanish. Suppose the following situation occurs:
\[
\left(a_{\sx(1)},\ldots,a_{\sx(n)}\right)=(\ldots,YW,YW,\ldots),
\]
with $a_{\sx(m)}=a_{\sx(m+1)}=YW$ for some $m\in\{1,\ldots,n-1\}$. This implies the following situation for the $\xh$-order:
\[
\left(c_{\sxh(1)},\ldots,c_{\sxh(2n)}\right) = (\ldots,W,Y,W,Y,\ldots),
\]
with $c_{\sxh(2m-1)}=c_{\sxh(2m+1)}=W$ and $c_{\sxh(2m)}=c_{\sxh(2m+2)}=Y$. If $\tau\in\BNC(\xh)$ contributes to the sum of cumulants above, then the block $V$ of $\tau$ containing $\sxh(2m)$ must also contain $\sxh(2m+1)$, 
so that $V$ contains indices corresponding to $c_{\sxh(2m)}=Y$ and $c_{\sxh}(2m+1)=W$, which contradicts our assumptions on the partition $\tau$ above, due to the $*$-bi-free independence condition.
Hence, when
\[
\left(a_{\sx(1)},\ldots,a_{\sx(n)}\right) = (\ldots,YW,YW,\ldots),
\]  
we obtain that the bi-free cumulant ${\kappa}_{\chi}(a_1,\ldots,a_n)$ vanishes and  similar arguments show that this is also the case when the sequence $(a_{\sx(1)},\ldots,a_{\sx(n)})$ has either one of the following forms:
\[
(\ldots,XZ,XZ,\ldots), (\ldots,Z^*X^*,Z^*X^*,\ldots) \text{ or } (\ldots,W^*Y^*,W^*Y^*,\ldots).
\]
Next, suppose the following situation occurs:
\[
\left(a_{\sx(1)},\ldots,a_{\sx(n)}\right)=(\ldots\ldots,XZ,YW,\ldots\ldots),
\] 
with $a_{\sx(m)}=XZ$ and $a_{\sx(m+1)}=YW$ for some $m\in\{1,\ldots,n-1\}$. This implies the following situation for the $\xh$-order:
\[
\left(c_{\sxh(1)}\ldots,c_{\sxh(2n)}\right) = (\ldots\ldots,X,Z,W,Y,\ldots\ldots),
\]
with $c_{\sxh(2m-1)}=X$, $c_{\sxh(2m)}=Z$,  $c_{\sxh(2m+1)}=W$ and $c_{\sxh(2m+2)}=Y$. If $\tau$ is a contributing partition, then the block $V\in\tau$ containing $\sxh(2m)$ must also contain $\sxh(2m+1)$, which implies that the sequence $(c_1,\ldots,c_{2n})|_V$ when read with the indices in the $\xh|_V$-order will be of the form
\[
(\ldots\ldots,Z,W,\ldots\ldots),
\]
hence this sequence is clearly  not $*$-alternating, which again contradicts our assumptions on the partition $\tau$. As a result, when
\[
\left(a_{\sx(1)},\ldots,a_{\sx(n)}\right) = (\ldots,XZ,YW,\ldots),
\]  
we obtain that the bi-free cumulant ${\kappa}_{\chi}(a_1,\ldots,a_n)$ vanishes and similar arguments show that this is also the case when the sequence $(a_{\sx(1)},\ldots,a_{\sx(n)})$ is of the form
$(\ldots,Z^*X^*,W^*Y^*,\ldots)$, which completes the proof.
\end{proof}

We will now proceed to investigate whether the condition of bi-R-diagonality is preserved under the
taking of arbitrary powers. We begin with the following result, which generalizes \cite[Theorem 2.2]{larsenpowers} to the bi-free setting (see also \cite[Proposition 15.22]{nicaspeicher}).

\begin{theorem}\label{birpowers}
Let $(A,\varphi)$ be a non-commutative $*$-probability space and let $(X,Y)$ be a bi-R-diagonal pair in $A$. Then, for every $p\geq 1$ the pair $(X^p,Y^p)$ is also bi-R-diagonal.
\end{theorem}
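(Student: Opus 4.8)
The strategy is to reduce the statement about $(X^p, Y^p)$ to a statement about cumulants of $X, X^*, Y, Y^*$ by writing each power as a product and applying Theorem \ref{openup}, exactly as in the proofs of Theorem \ref{prodbir-bifree} and Proposition \ref{prodbir-bir}. Fix $n \in \N$, $\chi \in \{l,r\}^n$, and elements $a_1, \ldots, a_n$ with $a_i \in \{X^p, (X^p)^*\} = \{X^p, (X^*)^p\}$ when $\chi(i) = l$ and $a_i \in \{Y^p, (Y^*)^p\}$ when $\chi(i) = r$. I would introduce $\xh \in \{l,r\}^{pn}$ obtained by replacing each index $i$ of $\chi$ by a block of $p$ consecutive indices all carrying the value $\chi(i)$, and define $c_1, \ldots, c_{pn}$ by "opening up" each $a_i$: if $a_i = X^p$ then the corresponding block of $c$'s is $(X, X, \ldots, X)$, if $a_i = (X^*)^p$ it is $(X^*, \ldots, X^*)$, and similarly for the right operators. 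Here there is no reversal issue — $X^p$ expands to $p$ copies of $X$ regardless of order — but one must track carefully how the $\chi$-order interacts with the blocks: a left $a_{\sx(i)}$ expands to $c_{\sxh(p(i-1)+1)}, \ldots, c_{\sxh(pi)}$ in that order, while for a right $a_{\sx(i)}$ the $p$ copies appear in the reversed order within the corresponding block of the $\xh$-order, but since all $p$ entries are equal this reversal is invisible. Theorem \ref{openup} then gives
\[
\kappa_\chi(a_1, \ldots, a_n) = \sum_{\substack{\tau \in \BNC(\xh) \\ \tau \vee \oxh = 1_{\xh}}} \kappa_{\xh, \tau}(c_1, \ldots, c_{pn}) = \sum_{\substack{\tau \in \BNC(\xh) \\ \tau \vee \oxh = 1_{\xh}}} \prod_{V \in \tau} \kappa_{\xhv}\big((c_1, \ldots, c_{pn})|_V\big),
\]
where now $\oxh = \{\{p(i-1)+1, \ldots, pi\} : i = 1, \ldots, n\}$.

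**The combinatorial step.** Since $(X,Y)$ is bi-R-diagonal, every factor $\kappa_{\xhv}((c_1,\ldots,c_{pn})|_V)$ vanishes unless $V$ has even cardinality and the restricted sequence, read in the $\xhv$-order, is alternating in $*$ and non-$*$ terms with all $X$-type entries preceding all $Y$-type entries. So it suffices to analyze partitions $\tau$ all of whose blocks have this alternating property and even size. I would argue along the lines of the odd-order reduction in Proposition \ref{prodbir-bir}: if $n$ is odd, then the total number of entries $pn$ need not be even, but more to the point one counts $X$-type versus $Y$-type entries and uses that in any contributing $\tau$ each block lies entirely in one "side"; combined with the parity constraint this forces $\kappa_\chi(a_1, \ldots, a_n) = 0$ when $n$ is odd. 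When $n$ is even and the sequence $(a_{\sx(1)}, \ldots, a_{\sx(n)})$ fails to be alternating (in the sense of Definition \ref{birdiagdef}(ii)), one locates two consecutive entries $a_{\sx(m)}, a_{\sx(m+1)}$ that are both $*$-type or both non-$*$-type, or of the wrong $X$-versus-$Y$ kind; these produce, inside the $\xh$-order, consecutive runs of length $p$ of equal letters. For any contributing $\tau$, I would show that either the block containing the "junction" index forces one of its cumulant factors to be non-alternating (hence zero by bi-R-diagonality of $(X,Y)$), or else one can build an interval partition $\lambda = \{\widetilde{V}, \widetilde{V}^{\mathsf c}\}$ with $\tau, \oxh \le \lambda \lneq 1_{\xh}$, contradicting $\tau \vee \oxh = 1_{\xh}$ — precisely the dichotomy exploited in Theorem \ref{prodbir-bifree}. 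The choice of $\widetilde{V}$ as a $\sxh$-interval of the form $\{\sxh(j) : a \le j \le b\}$ works because $\oxh$ now consists of intervals of length $p$ and such an interval, to be compatible with $\oxh \le \lambda$, must have endpoints at multiples of $p$ (or shifts thereof), which is automatic from the structure forced by the non-alternation.

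**The main obstacle.** The delicate point — and the one I expect to require the most care — is that, unlike in Proposition \ref{prodbir-bir} where $\oxh$ consisted of pair-blocks $\{2i-1, 2i\}$ and one could directly invoke Proposition \ref{evenblocks}, here $\oxh$ has blocks of size $p$, so Proposition \ref{evenblocks} does not literally apply and I cannot immediately conclude that contributing $\tau$ satisfy $\sxh(1) \sim_\tau \sxh(pn)$ and $\sxh(pi) \sim_\tau \sxh(pi+1)$. One fix is to prove a $p$-analogue of Proposition \ref{evenblocks}: if $\tau \vee \oxh = 1_{\xh}$ and every block of $\tau$ has cardinality divisible by — or at least compatible with — the $p$-block structure in the sense that each block meets each $\oxh$-block in $0$ or $p$ elements, then $\tau$ must "chain" the $\oxh$-blocks cyclically. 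However, since $X^p$ opens up to $p$ identical copies of $X$, the cumulant $\kappa_{\xhv}$ of a block is insensitive to how many copies from a given $\oxh$-block it picks up, and the bi-R-diagonality constraint only bites through the alternation pattern; so in fact the cleaner route is: group the $pn$ entries into their $n$ original $\oxh$-blocks, observe that within each $\oxh$-block all entries are equal, and show that the alternation requirement on each block $V$ of a contributing $\tau$, together with $\tau \vee \oxh = 1_\xh$, forces $V$ to consume entire $\oxh$-blocks and to chain them in $\chi$-order — at which point the sum collapses to an expression in the original cumulants $\kappa_\chi$ of $(X,Y)$ of "linked" type, and the non-alternation of $(a_{\sx(i)})_i$ propagates to a non-alternation of the entries of that surviving cumulant, which then vanishes. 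Making this reduction precise — i.e., showing the contributing partitions are exactly those induced by bi-R-diagonal-admissible pairings of the $\oxh$-blocks — is the technical heart, but it follows the same interval-partition obstruction argument used repeatedly above, now applied at the coarser scale of $\oxh$-blocks rather than individual indices.
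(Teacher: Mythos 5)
Your overall skeleton --- open up the powers, apply Theorem \ref{openup} with $\oxh$ consisting of intervals of length $p$, and kill each non-admissible $\tau$ either because some factor $\kappa_{\xhv}$ is non-alternating (hence zero by bi-R-diagonality of $(X,Y)$) or because an interval partition $\lambda=\{\widetilde V,(\widetilde V)^{\mathsf c}\}$ with $\tau,\oxh\le\lambda\lneq 1_{\xh}$ contradicts $\tau\vee\oxh=1_{\xh}$ --- is exactly the paper's proof, and you are right that Proposition \ref{evenblocks} is unavailable here (the paper never invokes it in this argument). But two of your steps are wrong. First, the odd-$n$ reduction: you claim that in any contributing $\tau$ each block lies entirely on the $X$-side or the $Y$-side and then count $X$-type versus $Y$-type entries. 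There is no $*$-bi-freeness hypothesis in this theorem to force such a separation, and bi-R-diagonality explicitly permits nonvanishing mixed cumulants of the forms (b)--(e) of Definition \ref{birdiagdef}, so blocks containing both $X$- and $Y$-type entries genuinely contribute. The count that works is $*$-terms versus non-$*$-terms: each contributing block must contain equally many of each (otherwise its restricted sequence cannot be alternating), and since each $a_i$ opens up into $p$ entries of a single type, this forces the number of $*$-type $a_i$ to equal the number of non-$*$-type $a_i$, whence $n$ is even. (The paper instead disposes of odd $n$ by observing that an alternating sequence of odd length has endpoints of the same type and running the interval obstruction on the block containing $\sxh(1)$.)

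Second, the ``cleaner route'' of your last paragraph rests on a false claim: contributing blocks need not consume entire $\oxh$-blocks. Take $p=2$, $\chi$ constant equal to $l$, and $\kappa_\chi(X^2,(X^*)^2)$: the opened-up sequence is $(X,X,X^*,X^*)$ and $\tau=\{\{1,4\},\{2,3\}\}$ satisfies $\tau\vee\oxh=1_{\xh}$ with both restricted sequences alternating, yet the block $\{2,3\}$ meets each $\oxh$-block in a single element. So the sum does not collapse to cumulants of $(X,Y)$ indexed by pairings of the $\oxh$-blocks, and no such reduction appears in the paper. What is actually needed (and what the paper proves) is the weaker fact that the $\prec_{\xh}$-extremal element $\sxh(q)$ of the block under scrutiny must sit at a position $q=tp$, i.e.\ at the right endpoint of an $\oxh$-block: otherwise the $\sxh$-interval it cuts off contains unequal numbers of $*$- and non-$*$-terms yet must be a union of blocks of $\tau$ by non-crossingness, so some factor vanishes. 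Only with $q=tp$ is $\widetilde V$ a union of $\oxh$-blocks, which is what gives $\oxh\le\lambda$ and hence the contradiction with $\tau\vee\oxh=1_{\xh}$.
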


\begin{proof}
Let $n\in\N, p\geq 1$, $\chi\in\{l,r\}^n$ and $a_1,\ldots,a_n\in A$ such that
\[
a_i\in \begin{cases}
       \{X^p,{(X^*)}^p\}, &\text{if } \chi (i)=l\\
        \{Y^p,{(Y^*)}^p\}, &\text{if }  \chi (i) = r\\   
     \end{cases}\quad  (i=1,\ldots,n).
\]
Also let $\xh\in\{l,r\}^{np},\oxh\in\BNC(\xh)$ and $c_1,\ldots,c_{2n}\in A$ be as in  Notation \ref{notation: products}, so that by Theorem \ref{openup} we have
\[
\kappa_\chi(a_1,\ldots,a_n) =  \sum_{\substack{\tau\in \BNC(\xh)\\
\tau\vee\oxh = 1_{\xh}\\}}
        \kappa_{\xh,\tau}(c_1,\ldots,c_{np})=  \sum_{\substack{\tau\in \BNC(\xh)\\
\tau\vee\oxh = 1_{\xh}\\}}
\prod_{V\in\tau}       \kappa_{{\xh}|_V}\left({(c_1,\ldots,c_{np})}|_{V}\right).
\]
We remark that since the pair $(X,Y)$ is bi-R-diagonal, by Proposition \ref{prop: reduction properties} if the partition $\tau\in\BNC(\xh)$ has a  non-zero contribution to the sum of cumulants above, every block $V$ of $\tau$ has even cardinality,  contains indices corresponding to an equal number of $*$-terms and non-$*$-terms and the sequence $(c_1,\ldots,c_{np})|_V$ is $*$-alternating when read with the indices in the induced $\xh|_V$-order. We will first show that if the sequence $(a_{\sx(1)},\ldots,a_{\sx(n)})$ is not $*$-alternating, then ${\kappa}_{\chi}(a_1,\ldots,a_n)=0$. Suppose the following situation occurs:
\[
\left(a_{\sx(1)},\ldots,a_{\sx(n)}\right) = (\ldots,X^p,Y^p,\ldots),
\]
where $a_{\sx(m)}=X^p$ and $a_{\sx(m+1)}=Y^p$ for some $m\in\{1,\ldots,n-1\}$. This implies the following situation for the $\xh$-order:
\[
\left(c_{\sxh(1)},\ldots,c_{\sxh(np)}\right) = \left(\ldots,\underbrace{X,X\ldots,X}_{p-\text{ terms}}, \underbrace{Y,Y,\ldots,Y}_{p-\text{terms}},\ldots\right),
\]
where $c_{\sxh((m-1)p+k)}=X$ and $c_{\sxh(mp+k)}=Y$, for all $k=1,\ldots,p$. Let $\tau\in\BNC(\xh)$ be a contributing partition and $V\in\tau$ such that $\sxh(mp+1)\in V$ (i.e. $V$ contains the index corresponding to the first right operator $Y$ which occurs in the sequence $(c_{\sxh(1)},\ldots,c_{\sxh(np)})$).  Consider the case when $\sxh(mp+1) = {\min}_{{\prec}_{\xh}}V$ and let $q\in\{1,\ldots,np\}$ be such that $\sxh(q)={\max}_{\prec_{\xh}}V$. Since the sequence $(c_1,\ldots,c_{np})|_V$ when read in the $\xh|_V$-order must be $*$-alternating, we see that
 $c_{\sxh(q)}$ must be a $*$-term and, since it appears after a right operator in the $\xh$-order, it must itself be a right operator, hence $c_{\sxh(q)}=Y^*$.
  We now claim  that we must necessarily have that $q=tp$ for some $m+2\leq t\leq n$, i.e. $c_{\sxh(q)}$ must be the last entry in a $p$-tuple of $(Y^*)$'s.

To see this, suppose that $q=tp+k$ with $t\in\{m+2,\ldots,n-1\}$ and $k\in\{1,\ldots,p-1\}$. This situation is depicted in the following diagram with the sequence $(c_1,\ldots,c_{np})$ read in the $\xh$-order.
\begin{equation*}
\begin{tikzpicture}[baseline]
			\draw[thick,dashed] (2,0) -- (12.9,0);
   \filldraw (2.1,-0.3) circle (0.02) node[anchor=north] {};
     \filldraw (2.5,-0.3) circle (0.02) node[anchor=north] {};
       \filldraw (2.9,-0.3) circle (0.02) node[anchor=north] {};
        \filldraw (3.3,-0.3) circle (0.02) node[anchor=north] {};
         \filldraw (3.7,0) circle (0.06) node[anchor=north] {$X$};
          \filldraw (4.1,-0.3) circle (0.02) node[anchor=north] {};
           \filldraw (4.5,-0.3) circle (0.02) node[anchor=north] {};
            \filldraw (4.9,-0.3) circle (0.02) node[anchor=north] {};
             \filldraw (5.3,0) circle (0.06) node[anchor=north] {$X$};
              \filldraw (5.9,0) circle (0.06) node[anchor=north] {$Y$};
              \filldraw (6.1,-0.3) circle (0.02) node[anchor=north] {};
              \filldraw (6.5,-0.3) circle (0.02) node[anchor=north] {};
              \filldraw (6.9,-0.3) circle (0.02) node[anchor=north] {};
              \filldraw (7.3,0) circle (0.06) node[anchor=north] {$Y$};
              \filldraw (7.7,-0.3) circle (0.02) node[anchor=north] {};
              \filldraw (8.1,-0.3) circle (0.02) node[anchor=north] {};
              \filldraw (8.5,-0.3) circle (0.02) node[anchor=north] {};
              \filldraw (8.9,-0.3) circle (0.02) node[anchor=north] {};
              \filldraw (9.3,0) circle (0.06) node[anchor=north] {$Y^*$};
              \filldraw (9.6,-0.3) circle (0.02) node[anchor=north] {};
              \filldraw (9.8,-0.3) circle (0.02) node[anchor=north] {};
              \filldraw (10.1,0) circle (0.06) node[anchor=north] {$Y^*$};
              \filldraw (10.4,-0.3) circle (0.02) node[anchor=north] {};
              \filldraw (10.6,-0.3) circle (0.02) node[anchor=north] {};
              \filldraw (10.9,0) circle (0.06) node[anchor=north] {$Y^*$};
              \filldraw (11.3,-0.3) circle (0.02) node[anchor=north] {};
              \filldraw (11.7,-0.3) circle (0.02) node[anchor=north] {};
              \filldraw (12.1,-0.3) circle (0.02) node[anchor=north] {};
              \filldraw (12.5,-0.3) circle (0.02) node[anchor=north] {};
\draw [thick, black,decorate,decoration={brace,amplitude=10pt,mirror},xshift=0.4pt,yshift=-0.4pt](3.6,-0.4) -- (5.3,-0.4) node[black,midway,yshift=-0.6cm] {\footnotesize $p$-terms};
\draw [thick, black,decorate,decoration={brace,amplitude=10pt,mirror},xshift=0.4pt,yshift=-0.4pt](5.9,-0.4) -- (7.3,-0.4) node[black,midway,yshift=-0.6cm] {\footnotesize $p$-terms};
\draw [thick, black,decorate,decoration={brace,amplitude=10pt,mirror},xshift=0.4pt,yshift=-0.4pt](9.2,-0.4) -- (10.8,-0.4) node[black,midway,yshift=-0.6cm] {\footnotesize $p$-terms};
\draw [thick, black,decorate,decoration={brace,amplitude=10pt},xshift=0.4pt,yshift=-0.4pt](6.0,0.2) -- (10,0.2) node[black,above=32pt, midway,yshift=-0.6cm] {\footnotesize unequal  $*$ and non-$*$-terms};
 \draw[-latex,line width=1pt] (5.9,-2)--(5.9,-1.2);
  \node[draw] at (5.9,-2.3) {${\min}_{{\prec}_{\xh}}V$};
   \draw[-latex,line width=1pt] (10.1,-2)--(10.1,-1.2);
  \node[draw] at (10.1,-2.3) {${\max}_{{\prec}_{\xh}}V$};
\draw[thick, black] (5.9,-2.7) -- (5.9,-3.1);
\draw[thick, black] (10.1,-2.7) -- (10.1,-3.1);
 \draw[-latex,line width=1pt] (6.8,-2.9)--(5.9,-2.9);
  \draw[-latex,line width=1pt] (9.2,-2.9)--(10.1,-2.9);
  \draw[thick,dashed, line width=1pt] (6.8,-2.9) -- (7.7,-2.9);
  \draw[thick,dashed, line width=1pt] (8.3,-2.9) -- (9.2,-2.9);
 \node[draw] at (8,-2.9) {$V$};             
\draw[thick, black] (5.9,0) -- (5.9,1.6) -- (10.1,1.6) -- (10.1,0);	
	
			\end{tikzpicture}
\end{equation*}
Observe that the $\xh$-interval $\{\sxh(j):mp+1\leq j\leq tp+k\}$ between the minimum and maximum indices of $V$ (in the $\xh$-order) needs to be written as a union of blocks of $\tau$ (since $\tau$ is bi-non-crossing) and clearly contains indices corresponding to an unequal number of $*$-terms and non-$*$-terms. Thus there exists a block $V'\in\tau$ with the same property, which contradicts our previous assumptions on the contributing partition $\tau$ and this shows that $q=tp$ for some $m+2\leq t\leq n$. But then the aforementioned $\xh$-interval becomes $\{\sxh(j):mp+1\leq j\leq tp\}$ and so is written both as a union of blocks $\oxh$, which is impossible by Lemma \ref{lemma: no block coupling}. Therefore, we may disregard the case when $\sxh(mp+1) = {\min}_{{\prec}_{\xh}}V$. 

Hence   there  exists $q<mp+1$ such that the index $\sxh(q)$ is in $V$ and we additionally 
assume that   $\sxh(q)$ and $\sxh(mp+1)$  are consecutive indices of $V$ in the $\xh$-order.  Note that since the sequence $(c_1,\ldots,c_{np})|_V$ when read in the $\xh|_V$-order is of the form $(\ldots,c_{\sxh(q)},Y,\ldots)$ and must be $*$-alternating, it must be the case that $c_{\sxh(q)}=X^*$ (observe that every operator that appears before $Y$ in the $\xh|_V$-order must be a left operator, since $c_{\sxh (mp+1)}=Y$ was the first  right operator that appeared in the $\xh$-order). As before, we find that $q=tp$ for some $1\leq t\leq m-1$ (i.e. $c_{\sxh(q)}$ must be the last entry in a $p$-tuple of $(X^*)$'s), since otherwise the $\xh$-interval $\{\sxh(j):q+1\leq j\leq mp\}$ between the consecutive indices $\sxh(q)$ and $\sxh(mp+1)$ contains unequal $*$-terms and non$*$-terms, as shown in the next diagram.
\begin{equation*}
\begin{tikzpicture}[baseline]
			\draw[thick,dashed] (2,0) -- (12.9,0);
   \filldraw (2.1,-0.3) circle (0.02) node[anchor=north] {};
     \filldraw (2.5,-0.3) circle (0.02) node[anchor=north] {};
       \filldraw (2.9,-0.3) circle (0.02) node[anchor=north] {};
        \filldraw (3.3,-0.3) circle (0.02) node[anchor=north] {};
         \filldraw (3.7,0) circle (0.06) node[anchor=north] {$X^*$};
          \filldraw (4.1,-0.3) circle (0.02) node[anchor=north] {};
           \filldraw (4.3,-0.3) circle (0.02) node[anchor=north] {};
            \filldraw (4.6,0) circle (0.06) node[anchor=north] {$X^*$};
            \filldraw (4.9,-0.3) circle (0.02) node[anchor=north] {};
             \filldraw (5.1,-0.3) circle (0.02) node[anchor=north] {};
             \filldraw (5.4,0) circle (0.06) node[anchor=north] {$X^*$};
              \filldraw (5.8,-0.3) circle (0.02) node[anchor=north] {};
               \filldraw (6.2,-0.3) circle (0.02) node[anchor=north] {};
                \filldraw (6.6,-0.3) circle (0.02) node[anchor=north] {};
                 \filldraw (7,-0.3) circle (0.02) node[anchor=north] {};
                 \filldraw (7.4,0) circle (0.06) node[anchor=north] {$X$};
                  \filldraw (7.8,-0.3) circle (0.02) node[anchor=north] {};
                   \filldraw (8.2,-0.3) circle (0.02) node[anchor=north] {};
                    \filldraw (8.6,-0.3) circle (0.02) node[anchor=north] {};
                    \filldraw (9,0) circle (0.06) node[anchor=north] {$X$};
                      \filldraw (9.6,0) circle (0.06) node[anchor=north] {$Y$};
                       \filldraw (10,-0.3) circle (0.02) node[anchor=north] {};
                        \filldraw (10.4,-0.3) circle (0.02) node[anchor=north] {};
                         \filldraw (10.8,-0.3) circle (0.02) node[anchor=north] {};
                          \filldraw (11.2,0) circle (0.06) node[anchor=north] {$Y$};
                          \filldraw (11.6,-0.3) circle (0.02) node[anchor=north] {};
                          \filldraw (12,-0.3) circle (0.02) node[anchor=north] {};
                          \filldraw (12.4,-0.3) circle (0.02) node[anchor=north] {};
                          \filldraw (12.8,-0.3) circle (0.02) node[anchor=north] {};
\draw [thick, black,decorate,decoration={brace,amplitude=10pt,mirror},xshift=0.4pt,yshift=-0.4pt](3.6,-0.4) -- (5.3,-0.4) node[black,midway,yshift=-0.6cm] {\footnotesize $p$-terms};
\draw [thick, black,decorate,decoration={brace,amplitude=10pt,mirror},xshift=0.4pt,yshift=-0.4pt](7.4,-0.4) -- (9,-0.4) node[black,midway,yshift=-0.6cm] {\footnotesize $p$-terms};
\draw [thick, black,decorate,decoration={brace,amplitude=10pt,mirror},xshift=0.4pt,yshift=-0.4pt](9.6,-0.4) -- (11.2,-0.4) node[black,midway,yshift=-0.6cm] {\footnotesize $p$-terms};
\draw [thick, black,decorate,decoration={brace,amplitude=10pt},xshift=0.4pt,yshift=-0.4pt](4.9,0.2) -- (8.9,0.2) node[black,above=32pt, midway,yshift=-0.6cm] {\footnotesize unequal  $*$ and non-$*$-terms};
          
\draw[thick, black] (4.6,0) -- (4.6,1.6) -- (9.6,1.6) -- (9.6,0);	
\draw[thick, black] (9.6,1.6) -- (10.2,1.6);
\draw[thick, black] (4,1.6) -- (4.6,1.6);
\draw[thick, black, dashed] (3,1.6) -- (4.1,1.6);
\draw[thick, black, dashed] (10.2,1.6) -- (11.3,1.6);	
			\end{tikzpicture}
\end{equation*}
However, if $q$ is a multiple of $p$, then the $\xh$-interval between $\sxh(q)$ and $\sxh(mp)$ is written as a union of blocks  of $\oxh$, which contradicts Lemma \ref{lemma: no block coupling}, as shown below.
\begin{equation*}
\begin{tikzpicture}[baseline]
			\draw[thick,dashed] (2,0) -- (12.9,0);
   \filldraw (2.1,-0.3) circle (0.02) node[anchor=north] {};
     \filldraw (2.5,-0.3) circle (0.02) node[anchor=north] {};
       \filldraw (2.9,-0.3) circle (0.02) node[anchor=north] {};
        \filldraw (3.3,-0.3) circle (0.02) node[anchor=north] {};
         \filldraw (3.7,0) circle (0.06) node[anchor=north] {$X^*$};
          \filldraw (4.2,-0.3) circle (0.02) node[anchor=north] {};
           \filldraw (4.6,-0.3) circle (0.02) node[anchor=north] {};
            \filldraw (5,-0.3) circle (0.02) node[anchor=north] {};
             \filldraw (5.4,0) circle (0.06) node[anchor=north] {$X^*$};
              \filldraw (5.8,-0.3) circle (0.02) node[anchor=north] {};
               \filldraw (6.2,-0.3) circle (0.02) node[anchor=north] {};
                \filldraw (6.6,-0.3) circle (0.02) node[anchor=north] {};
                 \filldraw (7,-0.3) circle (0.02) node[anchor=north] {};
                 \filldraw (7.4,0) circle (0.06) node[anchor=north] {$X$};
                  \filldraw (7.8,-0.3) circle (0.02) node[anchor=north] {};
                   \filldraw (8.2,-0.3) circle (0.02) node[anchor=north] {};
                    \filldraw (8.6,-0.3) circle (0.02) node[anchor=north] {};
                    \filldraw (9,0) circle (0.06) node[anchor=north] {$X$};
                      \filldraw (9.6,0) circle (0.06) node[anchor=north] {$Y$};
                       \filldraw (10,-0.3) circle (0.02) node[anchor=north] {};
                        \filldraw (10.4,-0.3) circle (0.02) node[anchor=north] {};
                         \filldraw (10.8,-0.3) circle (0.02) node[anchor=north] {};
                          \filldraw (11.2,0) circle (0.06) node[anchor=north] {$Y$};
                          \filldraw (11.6,-0.3) circle (0.02) node[anchor=north] {};
                          \filldraw (12,-0.3) circle (0.02) node[anchor=north] {};
                          \filldraw (12.4,-0.3) circle (0.02) node[anchor=north] {};
                          \filldraw (12.8,-0.3) circle (0.02) node[anchor=north] {};
\draw [thick, black,decorate,decoration={brace,amplitude=10pt,mirror},xshift=0.4pt,yshift=-0.4pt](3.6,-0.4) -- (5.3,-0.4) node[black,midway,yshift=-0.6cm] {\footnotesize $p$-terms};
\draw [thick, black,decorate,decoration={brace,amplitude=10pt,mirror},xshift=0.4pt,yshift=-0.4pt](7.4,-0.4) -- (9,-0.4) node[black,midway,yshift=-0.6cm] {\footnotesize $p$-terms};
\draw [thick, black,decorate,decoration={brace,amplitude=10pt,mirror},xshift=0.4pt,yshift=-0.4pt](9.6,-0.4) -- (11.2,-0.4) node[black,midway,yshift=-0.6cm] {\footnotesize $p$-terms};
\draw [thick, black,decorate,decoration={brace,amplitude=10pt},xshift=0.4pt,yshift=-0.4pt](5.8,0.2) -- (8.9,0.2) node[black,above=28pt, midway,yshift=-0.6cm,align=left] {\footnotesize union of blocks of $\tau$ and \\ \footnotesize union of blocks of $\oxh$};
          
\draw[thick, black] (5.4,0) -- (5.4,1.6) -- (9.6,1.6) -- (9.6,0);	
\draw[thick, black] (9.6,1.6) -- (10.2,1.6);
\draw[thick, black] (4.8,1.6) -- (5.4,1.6);
\draw[thick, black, dashed] (3.7,1.6) -- (4.8,1.6);
\draw[thick, black, dashed] (10.2,1.6) -- (11.3,1.6);	
			\end{tikzpicture}
			\end{equation*}
This shows that when
\[
\left(a_{\sx(1)},\ldots,a_{\sx(n)}\right) = (\ldots,X^p,Y^p,\ldots),
\]  
then for any $\tau\in\BNC(\xh)$ we either have that $\kappa_{\xh,\tau}(c_1,\ldots,c_{np})=0$ or the the partition $\tau$ fails to satisfy the relation $\tau\vee\oxh=1_{\xh}$.
As a result, the bi-free cumulant ${\kappa}_{\chi}(a_1,\ldots,a_n)$ vanishes and  similar arguments show that this is also the case when the sequence $\left(a_{\sx(1)},\ldots,a_{\sx(n)}\right)$ is in any other non-$*$-alternating form.
 It remains to show that if the cumulant ${\kappa}_{\chi}(a_1,\ldots,a_n)$ is of odd order, then it must vanish.

Assume that $n$ is an odd number. By the first part of the proof, we may suppose that the sequence  $\left(a_{\sx(1)},\ldots,a_{\sx(n)}\right)$ does not contain consecutive elements that both correspond to either $*$-terms or non-$*$-terms. Suppose the following situation occurs:
\[
\left(a_{\sx(1)},\ldots,a_{\sx(n)}\right) = \left({(X^*)}^p,\ldots\ldots,{(Y^*)}^p\right),
\]
where $a_{\sx(1)}={(X^*)}^p$ and $a_{\sx(n)}={(Y^*)}^p$. This implies the following situation for the $\xh$-order:
\[
\left(c_{\sxh(1)},\ldots,c_{\sxh(np)}\right) = \left(\underbrace{X^*,X^*,\ldots,X^*}_{p-\text{terms}},\ldots\ldots,\underbrace{Y^*,Y^*,\ldots,Y^*}_{p-\text{terms}}\right),
\]
where $c_{\sxh(k)}=X^*$ and $c_{\sxh((n-1)p+k)}=Y^*$, for all $k=1,\ldots,p$. Let $\tau\in\BNC(\xh)$ be a contributing partition and $V\in\tau$ such that $\sxh(1)\in V$. Also, let $q\in\{1,\ldots,np\}$ be such that $\sxh(q)={\max}_{\prec_{\xh}}V$. As argued previously, we  necessarily have $q=tp$ for some $t\in\{2,\ldots,n\}$, while Lemma \ref{lemma: no block coupling} shows that this is only possible when $q=np$.  But then the sequence $(c_1,\ldots,c_{np})|_V$ when read in the $\xh|_V$-order would be of the form $(X^*,\ldots,Y^*)$ and thus is either of odd length or not $*$-alternating, hence 
 the bi-free cumulant ${\kappa}_{\chi}(a_1,\ldots,a_n)$ necessarily vanishes. All remaining cases are handled similarly and
this completes the proof.
\end{proof}

We will now discuss why the conclusion of the previous theorem is no longer true if the exponents of the left and right operators comprising a bi-R-diagonal pair are distinct.  Consider a bi-Haar unitary pair $(u_l,u_r)$ in a non-commutative $*$-probability space $(A,\varphi)$. A computation of joint $*$-moments shows that for any positive integer $k\in\N$, the pair $(u_l^k,u_r^k)$ is also a bi-Haar unitary. However, if $k,m\in\N$ are distinct, then $k-m\neq 0$ but
\[
\varphi\left(\left(u_l^k\right)^{-m}\cdot \left(u_r^m\right)^k\right)=1\neq 0,
\]
thus the pair $(u_l^k,u_r^m)$ is not a bi-Haar unitary. A priori this does not exclude the possibility of $(u_l^k,u_r^m)$ being bi-R-diagonal.  To illustrate the situation, we will look at some examples of bi-free cumulants of the pair $(u_l^k,u_r^m)$ for various values of distinct $k,m\in\N$. First, consider the case $k=1,m=2$. To see that the pair $\left(u_l,u_r^2\right)$ is not bi-R-diagonal, for $\chi=\{l,l,r\}$ the bi-free cumulant $\kappa_\chi(u_l^*,u_l^*,u_r^2)$ is non-zero, even though it is both of odd order and its sequence of entries is not $*$-alternating  when read in the $\chi$-order. Indeed, by an application of Theorem \ref{openup} it is straightforward to verify that for $\xh=\{l,l,r,r\}$ and  $\tau=\left\{\{1,3\},\{2,4\}\right\}\in\BNC(\xh)$ we have
\[
\kappa_\chi(u_l^*,u_l^*,u_r^2)=\kappa_{\xh,\tau}(u_l^*,u_l^*,u_r,u_r)=\left[\kappa_{(l,r)}(u_l^*,u_r)\right]^2=\varphi(u_l^*\cdot u_r)^2=1\neq 0.
\]
The example above, where we considered a bi-free cumulant with two left terms in order to match the square of the single right term, suggests that we may always use a specified number of $(u_l^*)^k$-terms in order to match a given number of $(u_r^m)$-terms, which will result in a non-vanishing, non-$*$-alternating bi-free cumulant. As a result, to handle the general case of the pair $(u_l^k,u_r^m)$ with arbitrary distinct exponents $k,m\in\N$, it is natural to try and use $m$-many left terms and $k$-many right terms or,  even more generally, use $c$-many left terms and $d$-many right terms where $ck=dm$. However, it is not too hard to see that this does not work in general. For a concrete example, consider the pair $(u_l^2,u_r^4)$ and  the bi-free cumulant $\kappa_\chi \left((u_l^*)^2,(u_l^*)^2,(u_l^*)^2,(u_l^*)^2,u_r^4,u_r^4\right)$ where $\chi=\{l,l,l,l,r,r\}$ (clearly this corresponds to using $m=4$ left terms and $k=2$ right terms). For $\xh\in\{l,r\}^{16}$ with $\xh^{-1}(\{l\})=\{1,2,3,4,5,6,7,8\}$ and $\oxh\in\BNC(\xh)$ given by
\[
\oxh=\left\{\{1,2\},\{3,4\},\{5,6\},\{7,8\},\{9,10,11,12\},\{13,14,15,16\}\right\},
\]
an application of Theorem \ref{openup} yields
\[
\kappa_\chi\left((u_l^*)^2,(u_l^*)^2, (u_l^*)^2, (u_l^*)^2,u_r^4, u_r^4\right)=\sum_{\substack{\tau\in\BNC(\xh)\\\tau\vee\oxh=1_{\xh}}}\kappa_{\xh,\tau}\left(c_1,c_2,\ldots,c_{16}\right),
\]
where $c_1=\ldots=c_8=u_l^*$ and $c_9=\ldots=c_{16}=u_r$. Note that a bi-non-crossing partition $\tau\in\BNC(\xh)$ with  non-zero contribution to the sum of cumulants above must satisfy that all of its blocks contain exactly two indices, one corresponding to a $u_l^*$-term and the other to a $u_r$-term.  Indeed, if there exists a block $V\in\tau$ that contains at least three elements, then the sequence $\left(c_1,\ldots,c_{16}\right)|_V$ will be of the form
\[
\left(\ldots,u_l^*,u_l^*,\ldots\right)\text{ or }\left(\ldots,u_r,u_r,\ldots\right),
\]
and  since the pair $(u_l,u_r)$ is bi-R-diagonal we will have that
$\kappa_{\xh|_V}\left(\left(c_1,\ldots,c_{16}\right)|_V\right)=0$. Next, observe that the only bi-non-crossing partition whose every block contains exactly two indices, one corresponding to a $u_l^*$-term and one corresponding to a $u_r$-term, is the partition
\[
\rho=\left\{\{1,9\}, \{2,10\},\{3,11\},\{4,12\},\{5,13\},\{6,14\},\{7,15\},\{8,16\}\}\right\}\in\BNC(\xh).
\]
However, this partition does not satisfy that $\rho\vee\oxh=1_{\xh}$, since if
\[
\lambda=\left\{\{1,2,3,4,9,10,11,12\},\{5,6,7,8,13,14,15,16\}\right\},
\]
then it is easy to see that $\lambda\in\BNC(\xh)$ and $\rho,\oxh\leq\lambda$, hence in this case 
\[
\kappa_\chi\left((u_l^*)^2,(u_l^*)^2, (u_l^*)^2, (u_l^*)^2,u_r^4, u_r^4\right)=0.
\]
The next diagram illustrates the partitions $\oxh$ and $\rho$ (with the indices in the $\xh$-order). The two colored sections  below represent the blocks of $\lambda$, with each being a union of blocks of $\oxh$ and $\rho$.
\begin{equation*}
\begin{tikzpicture}[baseline]
			\draw[thick,dashed] (-1,0) -- (14,0);
\filldraw (-0.55,0) circle (0.06) node[anchor=north] {\begin{footnotesize}$1$\end{footnotesize}};
\filldraw (0.387,0) circle (0.06) node[anchor=north] {\begin{footnotesize}$2$\end{footnotesize}};
\filldraw (1.324,0) circle (0.06) node[anchor=north] {\begin{footnotesize}$3$\end{footnotesize}};
\filldraw (2.261,0) circle (0.06) node[anchor=north] {\begin{footnotesize}$4$\end{footnotesize}};
\filldraw (3.198,0) circle (0.06) node[anchor=north] {\begin{footnotesize}$5$\end{footnotesize}};
\filldraw (4.135,0) circle (0.06) node[anchor=north] {\begin{footnotesize}$6$\end{footnotesize}};
\filldraw (5.072,0) circle (0.06) node[anchor=north] {\begin{footnotesize}$7$\end{footnotesize}};
\filldraw (6.009,0) circle (0.06) node[anchor=north] {\begin{footnotesize}$8$\end{footnotesize}};
\filldraw (6.946,0) circle (0.06) node[anchor=north] {\begin{footnotesize}$16$\end{footnotesize}};
\filldraw (7.883,0) circle (0.06) node[anchor=north] {\begin{footnotesize}$15$\end{footnotesize}};
\filldraw (8.82,0) circle (0.06) node[anchor=north] {\begin{footnotesize}$14$\end{footnotesize}};
\filldraw (9.757,0) circle (0.06) node[anchor=north] {\begin{footnotesize}$13$\end{footnotesize}};
\filldraw (10.694,0) circle (0.06) node[anchor=north] {\begin{footnotesize}$12$\end{footnotesize}};
\filldraw (11.631,0) circle (0.06) node[anchor=north] {\begin{footnotesize}$11$\end{footnotesize}};
\filldraw (12.518,0) circle (0.06) node[anchor=north] {\begin{footnotesize}$10$\end{footnotesize}};
\filldraw (13.505,0) circle (0.06) node[anchor=north] {\begin{footnotesize}$9$\end{footnotesize}};

\draw[thick, black] (-0.55,0) -- (-0.55,1.5) -- (0.387,1.5) -- (0.387,0);
\draw[thick, black] (1.324,0) -- (1.324,1.5) -- (2.261,1.5) -- (2.261,0);
\draw[thick, black] (3.198,0) -- (3.198,1.5) -- (4.135,1.5) -- (4.135,0);
\draw[thick, black] (5.072,0) -- (5.072,1.5) -- (6.009,1.5) -- (6.009,0);
\draw[thick, black] (6.946,0) -- (6.946,1.5) -- (9.757,1.5) -- (9.757,0);
\draw[thick, black] (7.883,0) -- (7.883,1.5);
\draw[thick, black] (8.82,0) -- (8.82,1.5);
\draw[thick, black] (10.694,0) -- (10.694,1.5) -- (13.505,1.5) -- (13.505,0);
\draw[thick, black] (11.631,0) -- (11.631,1.5);
\draw[thick, black] (12.518,0) -- (12.518,1.5);

\draw[thick, black] (-0.55,-0.4) -- (-0.55,-2.7) -- (13.505,-2.7) -- (13.505,-0.4);
\draw[thick, black] (0.387,-0.4) -- (0.387,-2.4) -- (12.518,-2.4) -- (12.518,-0.4);
\draw[thick, black] (1.324,-0.4) -- (1.324,-2.1) -- (11.631,-2.1) -- (11.631,-0.4);
\draw[thick, black] (2.261,-0.4) -- (2.261,-1.7) -- (10.694,-1.7) -- (10.694,-0.4);
\draw[thick, black] (3.198,-0.4) -- (3.198,-1.4) -- (9.757,-1.4) -- (9.757,-0.4);
\draw[thick, black] (4.135,-0.4) -- (4.135,-1.1) -- (8.82,-1.1) -- (8.82,-0.4);
\draw[thick, black] (5.072,-0.4) -- (5.072,-0.8) -- (7.883,-0.8) -- (7.883,-0.4);
\draw[thick, black] (6.009,-0.4) -- (6.009,-0.5) -- (6.946,-0.5) -- (6.946,-0.4);

 \draw[red] (0.9,-0.05) ellipse (1.9cm and 0.45cm);
 \draw[red] (12.1,-0.05) ellipse (1.9cm and 0.45cm);
  \draw[blue] (6.5,-0.1) ellipse (3.6cm and 0.55cm);

 \draw[-latex,line width=1pt] (14.3,-1.35)--(13.8,-1.35);
  \node[draw] at (14.7,-1.35) {$\rho$};

   \draw[-latex,line width=1pt] (14.3,0.65)--(13.8,0.65);
  \node[draw] at (14.7,0.65) {$\oxh$};	\end{tikzpicture}
				\end{equation*}
The largest element of the second block of $\oxh$ is connected via $\rho$ to the smallest element of the last block of $\oxh$, so that unions of certain blocks of $\oxh$ equal unions of blocks of $\rho$, which gives rise to the partition $\lambda$ that is greater than both $\oxh$ and $\rho$, yet smaller than $1_{\xh}$. Thus a bi-free cumulant with $m=4$ $(u_l^*)^2$-terms and $k=2$ $(u_r^4)$-terms evaluates to zero, and the same argument shows that this is also the case when we have $c$-many left terms and $d$-many right terms with $c\geq m, d\geq k$ such that $ck=dm$. However,  the corresponding partition $\rho$ will satisfy that $\rho\vee\oxh=1_{\xh}$, provided  that the pair $(c,d)$ is minimal with respect to the relation $ck=dm$. This is the content of  the following lemma. 
\begin{lemma}\label{lemma:bi-R-diagpowerscounterexample}
Let $k,m\in\N$  and define
\[
c=\frac{m}{\gcd (k,m)}\text{ and }d=\frac{k}{\gcd (k,m)},
\]
 where $\gcd(k,m)$ denotes the greatest common divisor of $k$ and $m$. Note that $ck=dm$  and for 
$n=ck+dm$ 
define $\xh\in\{l,r\}^n$ by $\xh(i)=l$ for all $1\leq i\leq ck$ and $\xh(ck+i)=r$ for all $1\leq i\leq dm$. Also let $\oxh,\rho\in\mathcal{P}(n)$ 
be given by 
\[
\oxh=\left\{\{ik+1,\ldots,(i+1)k\}:0\leq i\leq c-1\right\}\cup\left\{\{ck+im+1,\ldots,ck+(i+1)m\}:0\leq i\leq d-1\right\},
\]
and
\begin{align*}
    \rho&=\left\{\{1,ck+1\},\{2,ck+2\},\ldots,\{ck,ck+dm\}\right\}=\left\{\{i,ck+i\}:1\leq i\leq ck\right\}.
\end{align*}
Then $\oxh,\rho\in\BNC(\xh)$ and $\rho\vee\oxh=1_{\xh}$.
\end{lemma}
\begin{proof}
Note that 
\[
\sxh(i)=i\text{ and }\sxh(ck+i)=n-i+1\text{ for all }1\leq i\leq ck,
\]
which implies that $\sxh^{-1}=\sxh$. If $\pi\in \NC(n)$ is given by
\[
\pi=\left\{\{1,ck+dm\},\{2,ck+dm-1\},\ldots,\{ck,ck+1\}\right\}=\left\{\{i,n-i+1\}:1\leq i\leq ck\right\},
\]
then $\rho=\sxh\cdot\pi$ and thus $\rho\in \BNC(\xh)$. Also, since $\sxh\cdot\oxh=\oxh$ and $\oxh\in\NC(n)$, it follows that $\oxh\in\BNC(\xh)$.  To show that $\rho\vee\oxh=1_{\xh}$,  it is enough to prove 
\[
\left(\sxh^{-1}\cdot 1_{\xh}\right)=\left(\sxh^{-1}\cdot\rho\right)\vee\left(\sxh^{-1}\cdot\oxh\right)\iff 1_n=\pi\vee\oxh.
\]
The next diagram illustrates the  partitions $\pi$ and $\oxh$.
\begin{equation*}
\begin{tikzpicture}[baseline]
			\draw[thick,dashed] (-1,0) -- (14,0);
\filldraw (-0.95,0) circle (0.06) node[anchor=north] {\begin{footnotesize}$1$\end{footnotesize}};
\filldraw (-0.65,0) circle (0.06) node[anchor=north] {\begin{footnotesize}$2$\end{footnotesize}};
\filldraw (-0.45,-0.25) circle (0.02) node[anchor=north] {};
\filldraw (-0.25,-0.25) circle (0.02) node[anchor=north] {};
\filldraw (-0.05,-0.25) circle (0.02) node[anchor=north] {};
\filldraw (0.15,0) circle (0.06) node[anchor=north] {\begin{footnotesize}$k$\end{footnotesize}};
\filldraw (0.45,-0.25) circle (0.02) node[anchor=north] {};
\filldraw (0.65,-0.25) circle (0.02) node[anchor=north] {};
\filldraw (0.85,-0.25) circle (0.02) node[anchor=north] {};
\filldraw (1.55,0) circle (0.06) node[anchor=north] {$\substack{(c-1)k+1}$};
\filldraw (2.25,-0.25) circle (0.02) node[anchor=north] {};
\filldraw (2.45,-0.25) circle (0.02) node[anchor=north] {};
\filldraw (2.65,-0.25) circle (0.02) node[anchor=north] {};
\filldraw (2.95,0) circle (0.06) node[anchor=north] {\begin{footnotesize}$ck$\end{footnotesize}};
\filldraw (3.75,0) circle (0.06) node[anchor=north] {\begin{footnotesize}$ck+1$\end{footnotesize}};
\filldraw (4.35,-0.25) circle (0.02) node[anchor=north] {};
\filldraw (4.55,-0.25) circle (0.02) node[anchor=north] {};
\filldraw (4.75,-0.25) circle (0.02) node[anchor=north] {};
\filldraw (5.45,0) circle (0.06) node[anchor=north] {\begin{footnotesize}$ck+m$\end{footnotesize}};
\filldraw (6.15,-0.25) circle (0.02) node[anchor=north] {};
\filldraw (6.35,-0.25) circle (0.02) node[anchor=north] {};
\filldraw (6.55,-0.25) circle (0.02) node[anchor=north] {};
\filldraw (7.60,0) circle (0.06) node[anchor=north] {$\substack{ck+(d-1)m+1}$};
\filldraw (8.65,-0.25) circle (0.02) node[anchor=north] {};
\filldraw (8.85,-0.25) circle (0.02) node[anchor=north] {};
\filldraw (9.05,-0.25) circle (0.02) node[anchor=north] {};
\filldraw (10.05,0) circle (0.06) node[anchor=north] {$\substack{ck+dm-k+1}$};
\filldraw (11.05,-0.25) circle (0.02) node[anchor=north] {};
\filldraw (11.25,-0.25) circle (0.02) node[anchor=north] {};
\filldraw (11.45,-0.25) circle (0.02) node[anchor=north] {};
\filldraw (12.25,0) circle (0.06) node[anchor=north] {$\substack{ck+dm-1}$};
\filldraw (13.55,0) circle (0.06) node[anchor=north] {$\substack{ck+dm}$};

\draw[thick, black] (-0.95,0) -- (-0.95,1.3) -- (0.15,1.3) -- (0.15,0);
\draw[thick, black] (-0.65,0) -- (-0.65,1.3);
\filldraw (-0.45,0.65) circle (0.02) node[anchor=north] {};
\filldraw (-0.25,0.65) circle (0.02) node[anchor=north] {};
\filldraw (-0.05,0.65) circle 
(0.02) node[anchor=north] {};

\draw[thick, black] (1.55,0) -- (1.55,1.3) -- (2.95,1.3) -- (2.95,0);

\draw[thick, black] (3.75,0) -- (3.75,1.3) -- (5.45,1.3) -- (5.45,0);

\filldraw (1.85,0.65) circle (0.02) node[anchor=north] {};
\filldraw (2.25,0.65) circle (0.02) node[anchor=north] {};
\filldraw (2.75,0.65) circle (0.02) node[anchor=north] {};

\filldraw (4.15,0.65) circle (0.02) node[anchor=north] {};
\filldraw (4.65,0.65) circle (0.02) node[anchor=north] {};
\filldraw (5.15,0.65) circle (0.02) node[anchor=north] {};

\draw[thick, black] (7.60,0) -- (7.60,1.3) -- (13.55,1.3) -- (13.55,0);
\draw[thick, black] (10.05,0) -- (10.05,1.3);
\draw[thick, black] (12.25,0) -- (12.25,1.3);

\filldraw (8,0.65) circle (0.02) node[anchor=north] {};
\filldraw (8.60,0.65) circle (0.02) node[anchor=north] {};
\filldraw (9.20,0.65) circle (0.02) node[anchor=north] {};
\filldraw (9.80,0.65) circle (0.02) node[anchor=north] {};

\filldraw (10.55,0.65) circle (0.02) node[anchor=north] {};
\filldraw (11.25,0.65) circle (0.02) node[anchor=north] {};
\filldraw (11.85,0.65) circle (0.02) node[anchor=north] {};

\draw[thick, black] (-0.95,-0.4) -- (-0.95,-2.5) -- (13.55,-2.5) -- (13.55,-0.4);
\draw[thick, black] (-0.65,-0.4) -- (-0.65,-2.1) -- (12.25,-2.1) -- (12.25,-0.4);
\draw[thick, black] (0.15,-0.4) -- (0.15,-1.7) -- (10.05,-1.7) -- (10.05,-0.4);
\draw[thick, black] (2.95,-0.4) -- (2.95,-1.3) -- (3.75,-1.3) -- (3.75,-0.4);

\filldraw (10.55,-1.05) circle (0.02) node[anchor=north] {};
\filldraw (11.25,-1.05) circle (0.02) node[anchor=north] {};
\filldraw (11.85,-1.05) circle (0.02) node[anchor=north] {};

\filldraw (-0.45,-1.05) circle (0.02) node[anchor=north] {};
\filldraw (-0.25,-1.05) circle (0.02) node[anchor=north] {};
\filldraw (-0.05,-1.05) circle 
(0.02) node[anchor=north] {};

\filldraw (0.6,-1.05) circle 
(0.02) node[anchor=north] {};
\filldraw (1.2,-1.05) circle 
(0.02) node[anchor=north] {};
\filldraw (1.7,-1.05) circle 
(0.02) node[anchor=north] {};
\filldraw (2.2,-1.05) circle 
(0.02) node[anchor=north] {};

\filldraw (5.25,-1.05) circle 
(0.02) node[anchor=north] {};
\filldraw (6.25,-1.05) circle 
(0.02) node[anchor=north] {};
\filldraw (7.25,-1.05) circle 
(0.02) node[anchor=north] {};
\filldraw (8.25,-1.05) circle 
(0.02) node[anchor=north] {};

 \draw[-latex,line width=1pt] (14.3,-1.35)--(13.8,-1.35);
  \node[draw] at (14.7,-1.35) {$\pi$};

   \draw[-latex,line width=1pt] (14.3,0.65)--(13.8,0.65);
  \node[draw] at (14.7,0.65) {$\oxh$};

  \draw [thick, black,decorate,decoration={brace,amplitude=10pt},xshift=0.4pt,yshift=-0.4pt](-0.95,1.4) -- (0.15,1.4) node[black,midway,yshift=0.6cm] {\footnotesize $k$-terms};

   \draw [thick, black,decorate,decoration={brace,amplitude=10pt},xshift=0.4pt,yshift=-0.4pt](1.55,1.4) -- (2.95,1.4) node[black,midway,yshift=0.6cm] {\footnotesize $k$-terms};

    \draw [thick, black,decorate,decoration={brace,amplitude=10pt},xshift=0.4pt,yshift=-0.4pt](3.75,1.4) -- (5.45,1.4) node[black,midway,yshift=0.6cm] {\footnotesize $m$-terms};

 \draw [thick, black,decorate,decoration={brace,amplitude=10pt},xshift=0.4pt,yshift=-0.4pt](7.60,1.4) -- (13.55,1.4) node[black,midway,yshift=0.6cm] {\footnotesize $m$-terms};

			\end{tikzpicture}
				\end{equation*}
First of all, elementary divisibility arguments show that all pairs $(x,y)$ of positive integers which are solutions to the  equation $xk=ym$ are of the form
\[
(x,y)=\left(\frac{zm}{\gcd(k,m)},\frac{zk}{\gcd(k,m)}\right)=(zc,zd),
\]
for some positive integer $z\geq 1$. This implies that  if $q\in\N$  is such that $q<c$, then $dm-qk$ is not a multiple of $m$. 

To show that $\pi\vee\oxh=1_n$ it is enough to show that if $\sigma\in\NC(n)$ is such that $\pi,\oxh\leq \sigma$, then for all $1\leq i\leq n-1$ we have that $i\sim_\sigma i+1$, i.e. that the indices $i$ and $i+1$ are in the same block of $\sigma$. Moreover, by looking at the block structure of $\oxh$,  it is enough to show that $i\sim_\sigma i+1$ when either $i=qk$ for some $1\leq q\leq c-1$, or when $i=ck+qm$ for some $1\leq q\leq d-1$.  

Suppose that $i=qk$ for $1\leq q\leq c-1$ and note that $i$ must be connected via $\pi$ to an index greater than $ck$, which may be written as $ck+pm+z$, with $0\leq p\leq d-1$ and $1\leq z\leq m$. We observe that the case $z=1$ is impossible; for otherwise if $qk\sim_\pi ck+pm+1$, then by its definition the partition $\pi$ has the set $\{i,n-i+1\}=\{qk,ck+dm-qk+1\}$ as a block, which implies that $dm-qk=pm$, a contradiction. Hence we may assume that $2\leq z\leq m$. In addition, the set $\{qk+1,ck+pm+z-1\}$ is a also block of $\pi$, while on the other hand both the indices $ck+pm+z-1$ and $ck+pm+z$ belong to the block $\{ck+pm+1,\ldots,ck+(p+1)m\}$ of $\oxh$.  We thus  see that
\[
qk\sim_\pi (ck+pm+z)\sim_{\oxh}(ck+pm+z-1)\sim_\pi qk+1,
\]
therefore, since $\oxh,\pi\leq \sigma$, we obtain $i\sim_\sigma i+1$.
The case when $i=ck+qm$ for $1\leq q\leq d-1$ is handled similarly and this completes the proof. 
\end{proof}
The next proposition shows that when one raises the left and right operators comprising a bi-Haar unitary pair to possibly distinct exponents, then the resulting pair will not be bi-R-diagonal unless the exponents are actually equal.
\begin{proposition}\label{prop:bi-R-diagpowerscounterexample}
Let $(A,\varphi)$ be a non-commutative $*$-probability space and let $(u_l,u_r)$ be a bi-Haar unitary pair in $(A,\varphi)$. For $k,m\in\N$ the following are equivalent:
\begin{enumerate}[(i)]
    \item the pair $\left(u_l^k,u_r^m\right)$ is bi-R-diagonal,
    \item $k=m$.
\end{enumerate}
\end{proposition}
\begin{proof}
The implication $(ii)\implies (i)$ follows by Theorem \ref{birpowers} (or, alternatively, by a direct computation of joint $*$-moments which shows that the pair $(u_l^k,u_r^k)$ is a bi-Haar unitary, hence also bi-R-diagonal). For the converse, assume that $k\neq m$, let $c,d,n,\xh,\oxh,\rho$ be as in the statement of Lemma \ref{lemma:bi-R-diagpowerscounterexample} and define $\chi\in\{l,r\}^{c+d}$ by $\chi^{-1}(l)=\{1,\ldots,c\}$. Also let $a_1,\ldots,a_{c+d}\in A$ be given by
\[
a_1=\ldots=a_c=(u_l^*)^k\text{ and }a_{c+1}=\ldots=a_{c+d}=u_r^m.
\]
Observe that the sequence
\[
\left(a_{\sx(1)},\ldots,a_{\sx(c+d)}\right)=\left(\underbrace{(u_l^*)^k,\ldots,(u_l^*)^k}_{c-\text{terms}}, \underbrace{u_r^m,\ldots,u_r^m}_{d-terms}\right),
\]
is $*$-alternating if and only if $c=d=1$, which happens precisely when $k=m$ since
\[
c=\frac{m}{\gcd(k,m)}\text{ and }d=\frac{k}{\gcd(k,m)}.
\]
We will show that $\kappa_\chi(a_1,\ldots,a_{c+d})=1$ which, as we assumed that $k\neq m$, will imply that $\kappa_\chi(a_1,\ldots,a_{c+d})$ is a non-vanishing, non-$*$-alternating  bi-free cumulant and therefore the pair $(u_l^k,u_r^m)$ is not bi-R-diagonal. By Theorem \ref{openup} we obtain
\[
\kappa_\chi (a_1,\ldots,a_{c+d})= \sum_{\substack{\tau\in \BNC(\xh)\\
\tau\vee\oxh = 1_{\xh}\\}}
        \kappa_{\xh,\tau}(c_1,\ldots,c_n),
\]
where $c_1,\ldots,c_n\in A$ are given by
$c_i=u_l^*$ for $1\leq i\leq ck$ and $c_i=u_r$ for $ck+1\leq i\leq ck+dm$. The only bi-non-crossing partition which may contribute to the sum of cumulants above is the partition all of whose blocks contain exactly two indices, one corresponding to a $u_l^*$-term and one corresponding to a $u_r$-term. This is the partition 
\[
\rho=\left\{\{1,ck+1\},\{2,ck+2\},\ldots,\{ck,ck+dm\}\right\}=\left\{\{i,ck+i\}:1\leq i\leq ck\right\}\in\BNC(\xh).
\]
By Lemma \ref{lemma:bi-R-diagpowerscounterexample} we also have that $\rho\vee\oxh=1_{\xh}$, therefore
\[
\kappa_\chi(a_1,\ldots,a_{c+d})=\kappa_{\xh,\rho}(c_1,\ldots,c_n)=\prod_{V\in\rho}\kappa_{\xh|_V}\left((c_1,\ldots,c_n)|_V\right)=\left[\kappa_{(l,r)}(u_l^*,u_r)\right]^{ck}=\varphi(u_l^*\cdot u_r)^{ck}=1,
\]
as desired.
\end{proof}
The previous proposition also shows that if $X,Y$ are R-diagonal operators in a non-commutative $*$-probability space, then the pair $(X,Y)$ is not necessarily bi-R-diagonal (see also Example \ref{example:freeR-diagnotbi-R-diag} and Proposition \ref{prop:tensorproductofR-diag}).   
With our discussion on powers of bi-R-diagonal pairs complete, we now proceed to show that bi-R-diagonal pairs of operators  yield examples of bi-free pairs  that consist of self-adjoint operators.

\begin{proposition}\label{bifree-s.a.}
Let $(A,\varphi)$ be a non-commutative $*$-probability space and $(X,Y)$ be a bi-R-diagonal pair in $A$. Then, the pairs
\[
(XX^*, Y^* Y) \ \text{and} \  (X^* X, Y Y^*)
\]
are bi-free.
\end{proposition}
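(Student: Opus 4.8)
The plan is to prove bi-freeness through the combinatorial characterisation of Theorem~\ref{equivbifree}, reducing the whole statement to the vanishing of certain mixed bi-free cumulants and then processing these with the product formula of Theorem~\ref{openup} together with the defining vanishing of bi-R-diagonal cumulants. By Theorem~\ref{equivbifree}, the two pairs of faces $(\alg(1_A,XX^*),\alg(1_A,Y^*Y))$ and $(\alg(1_A,X^*X),\alg(1_A,YY^*))$ form a bi-free family precisely when $\kappa_\chi(a_1,\ldots,a_n)=0$ for every $n\geq 2$, every $\chi\in\{l,r\}^n$, every \emph{non-constant} map $\epsilon\colon\{1,\ldots,n\}\to\{1,2\}$, and all $a_1,\ldots,a_n$ with $a_i$ in the left (respectively right) algebra of the $\epsilon(i)$-th pair when $\chi(i)=l$ (respectively $\chi(i)=r$). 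Using multilinearity of $\kappa_\chi$ and the vanishing of any bi-free cumulant of order at least $2$ having a scalar entry, I may assume each $a_i$ is a monomial: $(XX^*)^{m_i}$ or $(X^*X)^{m_i}$ if $\chi(i)=l$, and $(Y^*Y)^{m_i}$ or $(YY^*)^{m_i}$ if $\chi(i)=r$, with $m_i\geq 1$. Writing each $a_i$ out as the product of its $2m_i$ factors from $\{X,X^*\}$ or $\{Y,Y^*\}$ and applying Theorem~\ref{openup}, I obtain $\kappa_\chi(a_1,\ldots,a_n)=\sum\kappa_{\xh,\tau}(c_1,\ldots,c_N)$, the sum running over the $\tau\in\BNC(\xh)$ with $\tau\vee\oxh=1_{\xh}$, where $N=2\sum_i m_i$, the $c$'s list the individual factors in order and $\oxh\in\BNC(\xh)$ groups the factors of each $a_i$.

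Next I would bring in bi-R-diagonality. By bi-multiplicativity $\kappa_{\xh,\tau}(c_1,\ldots,c_N)=\prod_{V\in\tau}\kappa_{\xh|_V}\big((c_1,\ldots,c_N)|_V\big)$, and since $(X,Y)$ is bi-R-diagonal a summand can be nonzero only when every block $V$ has even cardinality and the word $(c_j)_{j\in V}$ read in the $\xh$-order alternates between $*$- and non-$*$-terms (it is then automatically one of the forms (a)--(e) of Definition~\ref{birdiagdef}, since all $X$-letters precede all $Y$-letters in the $\xh$-order). The point that makes the whole thing work is how the four algebras look after passing to the $\xh$-order: the factors of a given $a_i$ occupy an interval there, and reading them off gives $XX^*XX^*\cdots$, $X^*XX^*X\cdots$, $YY^*YY^*\cdots$ or $Y^*YY^*Y\cdots$ according as $a_i$ is drawn from the first pair on the left, the second on the left, the first on the right, or the second on the right; so in the $\xh$-order a word coming from the first pair begins with a non-$*$-term and ends with a $*$-term, whereas a word from the second pair does the reverse. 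I then define the height function $h_k$ to be the number of non-$*$-letters minus the number of $*$-letters among the first $k$ letters in the $\xh$-order. Since each $a_i$-word carries equally many of the two kinds of letters, $h$ is $0$ at every word boundary, stays in $\{0,1\}$ inside a first-pair word and in $\{0,-1\}$ inside a second-pair word; hence whether $c_j$ is a $*$-term together with the pair $(h_{j-1},h_j)$ determines which pair the word containing position $j$ belongs to.

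The heart of the proof is then a ``monochromaticity'' lemma: for a $\tau$ all of whose blocks are even and alternating, every block $V=\{v_1<\cdots<v_{2t}\}$ is contained entirely in words of a single pair. Here one uses that, $\tau$ being non-crossing and $v_i,v_{i+1}$ lying in the same block, the positions strictly between $v_i$ and $v_{i+1}$ form a union of blocks of $\tau$, each even and alternating and hence with equally many $*$- and non-$*$-letters, so $h_{v_{i+1}-1}=h_{v_i}$; together with the alternation along $V$ this forces $(h_{v_i-1},h_{v_i})$ to be given, for every $i$, by a single parameter, which by the previous paragraph is precisely the pair of the word containing $v_i$. (This is in the spirit of Proposition~\ref{evenblocks}, but since $\oxh$ here has blocks of size $2m_i$ rather than $2$, the gap argument must be carried out directly.) Finally, $\tau\vee\oxh=1_{\xh}$ says exactly that the $a_i$-words are connected through shared blocks of $\tau$, and since each shared block forces the two words it meets into the same pair, all the $a_i$ belong to one pair, contradicting the non-constancy of $\epsilon$. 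Therefore the sum has no surviving term and $\kappa_\chi(a_1,\ldots,a_n)=0$, so the family is bi-free. The hardest part will be the monochromaticity lemma --- above all, spotting that the height function is the right invariant and checking that it is simultaneously pinned to $0$ at word boundaries and confined to $\{0,\pm1\}$ inside each word, with sign governed by the pair; once this is available the connectivity argument is routine.
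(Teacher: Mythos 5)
Your argument is correct, but it takes a genuinely different route from the paper's. The paper only checks mixed cumulants whose entries are the generators $XX^*$, $X^*X$, $Y^*Y$, $YY^*$ themselves, so that after opening up, $\oxh$ has blocks of size two and Proposition \ref{evenblocks} applies directly: every contributing $\tau$ must join $\sxh(2i)$ to $\sxh(2i+1)$, and at any place where two $\chi$-adjacent arguments come from different pairs the resulting junction $(\ldots,X^*,Y^*,\ldots)$ (or one of its variants) sits inside a single block and fails to alternate, killing that block's cumulant. You instead allow arbitrary monomials $(XX^*)^{m_i}$, etc., for which Proposition \ref{evenblocks} is not available, and you replace it by the height-function/monochromaticity argument followed by a connectivity argument on $\tau\vee\oxh$. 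Both of your key steps check out: the height profile is pinned to $0$ at word boundaries and confined to $\{0,1\}$ resp.\ $\{0,-1\}$ according to the pair; the gap between consecutive elements of a block of a non-crossing partition is a union of blocks, each balanced under your standing assumption, so the monochromaticity lemma holds; and the connectivity step then eliminates every surviving $\tau$ at once rather than having to locate a particular bad junction. What your version buys is that it verifies the cumulant condition in the form actually demanded by Theorem \ref{equivbifree} (entries ranging over the generated algebras), whereas the paper's proof treats only the generators and implicitly relies on the standard reduction from algebras to generating sets; your monochromaticity lemma is also a genuine generalization of Proposition \ref{evenblocks} to $\oxh$ with blocks of arbitrary even size. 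One small expository point: the ordering $v_1<\cdots<v_{2t}$ and the phrase ``$\tau$ being non-crossing'' must be read in the $\xh$-order, i.e.\ applied to $\sxh^{-1}(V)$ and to $\sxh^{-1}\cdot\tau\in\NC(N)$ --- which is clearly what you intend, since the height function is defined in that order.
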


\begin{proof}
Let $n\in\N, n\geq 2$, $\chi\in\{l,r\}^n$ and $a_1,\ldots,a_n\in A$ such that
\[
a_i\in \begin{cases}
       \{XX^*,X^*X\}, &\text{if }  \chi (i)=l\\
        \{Y^*Y,YY^*\}, &\text{if }  \chi (i) = r\\   
     \end{cases}\quad (i=1,\ldots,n).
\]
Moreover, suppose that there exist $i\neq j\in\{1,\ldots,n\}$ such that $a_i\in\{XX^*,Y^*Y\}$ and $a_j\in\{X^*X,YY^*\}$. We will show that ${\kappa}_{\chi}(a_1,\ldots,a_n)=0$, which will imply that the pairs $(XX^*,Y^*Y)$ and $(X^*X,YY^*)$ are indeed bi-free.  Let $\xh\in\{l,r\}^{2n},\oxh\in\BNC(\xh)$ and $c_1,\ldots,c_{2n}\in A$ be as in  Notation \ref{notation: products}, so that by Theorem \ref{openup} we have
\[
\kappa_\chi(a_1,\ldots,a_n) =  \sum_{\substack{\tau\in \BNC(\xh)\\
\tau\vee\oxh = 1_{\xh}\\}}
        \kappa_{\xh,\tau}(c_1,\ldots,c_{2n})=  \sum_{\substack{\tau\in \BNC(\xh)\\
\tau\vee\oxh = 1_{\xh}\\}}
\prod_{V\in\tau}       \kappa_{{\xh}|_V}\left({(c_1,\ldots,c_{2n})}|_{V}\right).
\]
Since the pair $(X,Y)$ is bi-R-diagonal, by Proposition \ref{prop: reduction properties}  we have that every block of a partition $\tau\in\BNC(\xh)$ which contributes to the sum of cumulants above has an even number of elements. Our initial assumption implies that there exists $i\in\{1,\ldots,n\}$ such that either
\[
a_{\sx(i)}\in\{XX^*,Y^*Y\}\text{ and } a_{\sx(i+1)}\in\{X^*X,YY^*\},
\]
or 
\[
a_{\sx(i)}\in\{X^*X,YY^*\}\text{ and }a_{\sx(i+1)}\in\{XX^*,Y^*Y\}.
\]
Assume that $a_{\sx(i)}=XX^*$ and $a_{\sx(i+1)}=YY^*$ (with the remaining cases handled similarly). Then, the following situation occurs for the $\xh$-order:
\[
(c_{\sxh(1)},\ldots,c_{\sxh(2n)}) = (\ldots,X,X^*,Y^*,Y,\ldots),
\]
where $c_{\sxh(2i-1)}=X, c_{\sxh(2i)}=X^*, c_{\sxh(2i+1)}=Y^*$ and $c_{\sxh(2i+2)}=Y$. By Proposition \ref{evenblocks}, for $\tau\in\BNC(\xh)$ such that $\tau\vee\oxh = 1_{\xh}$ there exists $V\in\tau$ with $\{\sxh(2i),\sxh(2i+1)\}\subseteq V$. But then, the sequence ${(c_1,\ldots,c_{2n})}|_V$ when read in the induced $\xhv$-order would be of the form
\[
(\ldots\ldots,X^*,Y^*,\ldots\ldots),
\]
with this implying that ${\kappa}_{\xhv}({(c_1,\ldots,c_{2n})}|_V)=0$, since  bi-free cumulants with entries in $\{X,X^*,Y,Y^*\}$  that are non-$*$-alternating in the  corresponding $\chi$-order must vanish. Hence ${\kappa}_{\xh,\tau}(c_1,\ldots,c_{2n})=0$ for all $\tau\in\BNC(\xh)$ with $\tau\vee\oxh=1_{\chi}$, which shows that $\kappa_\chi (a_1,\ldots,a_n)=0$,   as desired.
\end{proof}

We remark that if $(X,Y)$ is a bi-R-diagonal pair in some non-commutative $*$-probability space, then it is not necessarily true that the pairs $(XX^*,YY^*)$ and $(X^*X,Y^*Y)$ are bi-free, as the following example indicates.

\begin{example}
Let $(u_l,u_r)$ be a bi-Haar unitary pair in a  $\mathrm{C}^*$-probability space $(A,\varphi)$ and consider the pair $(Z,W)$ in the space $(\mathcal{M}_2(\C),\tr)$ defined as follows:
\[
Z = \begin{bmatrix}
1 & 0\\ 0 &0
\end{bmatrix} \text{ and } W = \begin{bmatrix}
0 & 0\\ 1 &0
\end{bmatrix}.
\]
We may find a larger  $\mathrm{C}^*$-probability space $(B,\psi)$ such that the pairs $(u_l,u_r)$ and $(Z,W)$ are $*$-bi-free in $(B,\psi)$  and such that $\psi$ preserves the joint $*$-distributions of the pairs $(u_l,u_r)$ and $(Z,W)$ with respect to $\varphi$ and $\tr$ respectively (see Remark \ref{remark:propertiesofbi-freepairs}). By Theorem \ref{prodbir-bifree}, the pair $(u_l Z,Wu_r)$ is bi-R-diagonal in $(B,\psi)$. However, the pairs
\[
(Z^* u_l^* u_l Z, u_r^* W^* Wu_r) = (Z^*Z,u_r^* W^* Wu_r)\text{ and }(u_l ZZ^* u_l^*, Wu_r u_r^* W^*) = (u_l ZZ^* u_l^*, WW^*),
\]
are not bi-free, since the moment-cumulant formula yields
\[
\pushQED{\qed}
\kappa_{\chi}(Z^*Z,WW^*) = \tr(Z^*Z WW^*)-\tr(Z^*Z)\cdot\tr(WW^*) = -\frac{1}{4}\neq 0.\qedhere
\popQED
\]
\end{example}

We close this section by discussing free product and tensor product distibutions of pairs formed by R-diagonal operators. First of all, pairs formed by  freely independent R-diagonal operators need not be bi-R-diagonal, as the next example indicates.
\begin{example}\label{example:freeR-diagnotbi-R-diag}
Let $(A,\varphi_1)$ and $(B,\varphi_2)$ be non-commutative $*$-probability spaces and let $u\in (A,\varphi_1)$ and $v\in (B,\varphi_2)$ be Haar unitaries. Then the pair $(u,v)$ is not bi-R-diagonal in $(A*B,\varphi)$ where $\varphi=\varphi_1*\varphi_2$. Indeed, for $\chi\in\{l,r\}^4$ with $\chi^{-1}(\{l\})=\{1,3\}$, we claim that the bi-free cumulant $\kappa_\chi (u,v,u^*,v^*)$ does not vanish, even though the sequence $(u,v,u^*,v^*)$ when viewed in the induced $\chi$-order is given by $(u,u^*,v^*,v)$ and thus is not $*$-alternating. By the moment-cumulant formula we have
\[
\kappa_\chi(u,v,u^*,v^*)=\sum_{\tau\in \BNC(\chi)}\varphi_\tau (u,v,u^*,v^*)\mu_{\BNC}(\tau,1_\chi).
\]
Note that since  
both $u$ and $v$ are Haar unitaries,  all elements in the set $\{u,v,u^*,v^*\}$ are centered (i.e. $\varphi(u)=\varphi(u^*)=\varphi(v)=\varphi(v^*)=0$), thus in order for a bi-non-crossing partition to yield a non-zero contribution to the sum above all of its blocks must not be singletons.
These are the following three bi-non-crossing partitions:
\[
\tau_1=\{\{1,3\},\{2,4\}\}, \tau_2=\{\{1,2\},\{3,4\}\}\text{ and }\tau_3=\{\{1,2,3,4\}\}=1_\chi.
\]
Since the algebras $\alg (\{u,u^*\})$ and $\alg (\{v,v^*\})$ are freely independent, by the characterization of free independence in terms of moments we see that $\varphi (uv)=\varphi(u^*v^*)=\varphi(uvu^*v^*)=0$ and thus
\[
\varphi_{\tau_2}(u,v,u^*,v^*)=\varphi(uv)\varphi(u^*v^*)=0\text{ and }\varphi_{\tau_3}(u,v,u^*,v^*)=\varphi(uvu^*v^*)=0.
\]
Since $\mu(\tau_1,1_\chi)=-1$ we have
\[
\kappa_\chi(u,v,u^*,v^*)=-\varphi_{\tau_1}(u,v,u^*,v^*)=-\varphi(uu^*)\varphi(vv^*)=-1\neq 0,
\]
which shows that the pair $(u,v)$ is not bi-R-diagonal.\qed
\end{example}

An example as the one above is not surprising, since  free independence fits into the setting of bi-freeness  via considering either only left or only right actions of algebras on reduced free product spaces independently, and is by itself not sufficient to account for a mixture of  left and right actions. In addition, the joint $*$-distributions of bi-R-diagonal pairs are, in a specific sense, characterized by the bi-free independence condition (see Theorem \ref{invdistr}). On the other hand, since as stated in Remark \ref{remark:propertiesofbi-freepairs} the classical independence of algebras implies  bi-freeness of the induced left and right algebras (see also  \cite[Section 3.2]{skoufranisind} for a combinatorial description of the interplay between bi-freeness and classical independence), it is natural for one to expect that pairs formed by R-diagonal operators in tensor product non-commutative probability spaces (or by classically independent R-diagonal operators) will be bi-R-diagonal. This is indeed the case, as indicated in the next proposition.

\begin{proposition}\label{prop:tensorproductofR-diag}
Let $(B,\varphi_1)$ and $(C,\varphi_2)$  be non-commutative $*$-probability spaces and let $b\in B, c\in C$ be R-diagonal. Then the pair $(b\otimes 1_C,1_B\otimes c)$ is bi-R-diagonal in  $(B\otimes C,\varphi)$, where $\varphi=\varphi_1\otimes\varphi_2$.  
\end{proposition}
\begin{proof}
First of all, note that since the $*$-distribution of $b\in (B,\varphi_1)$ coincides with the $*$-distribution of $b\otimes 1_C\in (B\otimes C,\varphi)$, we have that $b\otimes 1_C$ is R-diagonal in $B\otimes C$ and, similarly, $1_B\otimes c$ is also R-diagonal. Since the algebras $B\otimes 1_C$ and $1_B\otimes C$ are  classically independent, by \cite[Proposition 2.16]{voiculescu} we have that the pairs of faces $(B\otimes 1_C,\mathbb{C}1_{B\otimes C})$ and $(\mathbb{C}1_{B\otimes C}, 1_B\otimes C)$  are bi-free in $B\otimes C$ with respect to $\varphi=\varphi_1\otimes\varphi_2$, which then implies that all mixed bi-free cumulants with entries in the set $\{b\otimes 1_C,b^*\otimes 1_C,1_B\otimes c,1_B\otimes c^*\}$ vanish. Specifically, for all $n\in\mathbb{N}$, for all non-constant maps  $\chi\in\{l,r\}^n$ and for all $Z_1,\ldots,Z_n\in B\otimes C$ such that
\[
Z_i\in\begin{cases}
    \{b\otimes 1_C,b^*\otimes 1_C\},\quad\text{ if }\chi(i)=l\\
    \{1_B\otimes c,1_B\otimes c^*\},\quad\text{ if }\chi(i)=r
\end{cases}
(i=1,\ldots,n),
\]
it follows that $\kappa_\chi(Z_1,\ldots,Z_n)=0$. As a result, in order for a bi-free cumulant with entries in the set $\{b\otimes 1_C,b^*\otimes 1_C,1_B\otimes c,1_B\otimes c^*\}$ to be non-vanishing, all of its entries must be either left variables (i.e. all of its entries must lie in the set $\{b\otimes 1_C,b^*\otimes 1_C\}$) or all of its entries must be right variables (i.e. all of its entries must lie in the set $\{1_B\otimes c,1_B\otimes c^*\}$). However, a bi-free cumulant with only left or only right entries reduces to a free cumulant which can be non-vanishing only if the sequence of either the left or the right entries is $*$-alternating, since both $b\otimes 1_C$ and $1_B\otimes c$ are R-diagonal. Specifically, if $n\in\N,\chi=l^n$ and $Z_1,\ldots,Z_n\in\{b\otimes 1_C, b^*\otimes 1_C\}$ then
\[
\kappa_\chi(Z_1,\ldots,Z_n)=\kappa_n(Z_1,\ldots,Z_n),
\]
where on the right-hand side we have a free cumulant.  But since $b\otimes 1_C$ is R-diagonal in $(B\otimes C,\varphi)$, in order for the free cumulant $\kappa_n(Z_1,\ldots,Z_n)$ to be non-zero, we must have that the sequence $(Z_1,\ldots,Z_n)$ is of even  length and  $*$-alternating. Since the corresponding vanishing property also holds for all bi-free cumulants with entries in the set $\{1_B\otimes c,1_B\otimes c^*\}$, we obtain that the pair $(b\otimes 1_C,1_B\otimes c)$ is bi-R-diagonal, as desired.
\end{proof}

\begin{remark}\label{remark: every power is bi-R-diag}
We note that the proposition above gives examples of bi-R-diagonal pairs of operators $(X,Y)$ such that $(X^n,Y^m)$ is also bi-R-diagonal for all $n,m\in\N$ (compare this  with Proposition \ref{prop:bi-R-diagpowerscounterexample}). Indeed, if $a\in (A,\varphi)$ and $b\in (B,\psi)$ are R-diagonal, then by \cite[Theorem 2.2]{larsenpowers}  the operators $a^n$ and $b^m$ are also R-diagonal for all $n,m\in\N$, hence by Proposition \ref{prop:tensorproductofR-diag} the pair 
\[
\left(\left(a\otimes 1_B\right)^n,\left(1_A\otimes b\right)^m\right)=\left(a^n\otimes 1_B, 1_A\otimes b^m\right),
\]
is bi-R-diagonal in $(A\otimes B,\varphi\otimes\psi)$. On the other hand, if $u\in (A,\varphi)$ and $v\in (B,\psi)$ are Haar unitaries, then  the pair $(u\otimes  1_B,1_A\otimes v)$ is bi-R-diagonal but not bi-Haar unitary in $(A\otimes B,\varphi\otimes\psi)$, since for $n\in\N$ we see
\[
(\varphi\otimes\psi)\left((u\otimes 1_B)^n\cdot (1_A\otimes v)^{-n}\right)=\varphi(u^n)\cdot\psi(v^{-n})=0\neq 1.
\]
\end{remark}
The previous proposition allows us to characterize the joint $*$-distributions of bi-R-diagonal pairs where the algebras of left and right operators are classically independent.

\begin{corollary}\label{corr:clasinpbi-R-diagcharacterization}
Let $(A,\varphi)$ be a non-commutative $*$-probability space and let $X,Y\in A$ be such that the algebras $\alg(\{X,X^*\})$ and $\alg(\{Y,Y^*\})$ commute. The following are equivalent:
\begin{enumerate}[(i)]
    \item the pair $(X,Y)$ is bi-R-diagonal and the algebras $\alg(\{X,X^*\})$ and $\alg(\{Y,Y^*\})$ are classically independent, so that $\varphi(Z W)=\varphi(Z)\cdot\varphi(W)$ for all $Z\in\alg(\{X,X^*\})$ and $W\in\alg (\{Y,Y^*\})$,
    \item there exist non-commutative $*$-probability spaces $(B,\psi_1)$ and $(C,\psi_2)$ and R-diagonal operators $b\in B$ and $c\in C$ such that the joint $*$-distribution of the pair $(X,Y)$ coincides with the joint $*$-distribution of the pair $(b\otimes 1_C, 1_B\otimes c)$, with the latter  considered in the tensor product space $(B\otimes C, \psi)$, where $\psi=\psi_1\otimes\psi_2$.
\end{enumerate}
\end{corollary}

\begin{proof}
For the forward implication, let $\psi_1$ and $\psi_2$ be the states given as the restrictions of $\varphi$ on the unital $*$-algebras $B=\alg (\{1_A,X,X^*\})$ and $C=\alg (\{1_A,Y,Y^*\})$ respectively, and note that $X$ and $Y$ are R-diagonal operators on the non-commutative $*$-probability spaces $(B,\psi_1)$ and $(C,\psi_2)$ respectively. If $\psi$ denotes the restriction of $\varphi$ to the unital $*$-algebra $A'$ generated by the union $B\cup C$, then as the algebras $B$ and $C$ are classically independent we must have that $\psi=\psi_1\otimes\psi_2$, and therefore the joint $*$-distributions of the pairs
\[
(X,Y)\in (A',\psi)\text{ and }(X\otimes 1,1\otimes Y)\in (B\otimes C,\psi_1\otimes\psi_2)
\]
must coincide.

For the converse implication,  first note that if the joint $*$-distributions of the pairs $(X,Y)$ and $(b\otimes 1_C, 1_B\otimes c)$ coincide, then  the algebras $\alg (\{X,X^*\})$ and $\alg (\{Y,Y^*\})$ are classically independent. Moreover, since by Proposition \ref{prop:tensorproductofR-diag} the pair $(b\otimes 1_C,1_B\otimes c)$ is bi-R-diagonal, again by the equality of joint $*$-distributions it follows that the pair $(X,Y)$ is also bi-R-diagonal.
\end{proof}
Recall that in the case of a free Haar unitary $u\in (A,\varphi)$, its $*$-distribution is represented by the Haar measure on the unit circle, as can be verified by direct computation of $*$-moments. Indeed,  if $\mu$ denotes the normalized Haar measure on the unit circle $\mathbb{T}$, then for $n,m\in\N$ we see that
\[
\int_\mathbb{T} z^n\overline{z}^md\mu=\int_0^1 e^{2\pi (n-m)it}dt=\begin{cases}
    1,\quad\text{ if }n=m\\
    0,\quad\text{ otherwise},
\end{cases} 
\]
which coincide with the $*$-moments of $u$, so that in the $\mathrm{C}^*$-probability space setting, $\mu$ is the spectral measure of $u$.  We remark that in the case of a bi-Haar unitary pair $(u_l,u_r)$ in $(A,\varphi)$, its joint $*$-distribution is not represented by the product  Haar measure. Even though this can be verified directly via computation of product measure mixed moments and comparing to the corresponding ones for the pair $(u_l,u_r)$, it is enough to note that, in view of Corollary \ref{corr:clasinpbi-R-diagcharacterization},  the joint $*$-distribution of $(u_l,u_r)$ is not a tensor product distribution. This is because even though the algebras $\alg (\{u_l, u_l^*\})$ and $\alg (\{u_r, u_r^*\})$ commute, they are not classically independent, since for all $n\in\N$  we see
\[
\varphi(u_l^n\cdot u_r^{-n})=1\neq 0=\varphi (u_l^n)\cdot\varphi(u_r^{-n}).
\]

\section{Joint $*$-Distributions of Bi-R-Diagonal Pairs}\label{sctn:distributions}
In this section, we will be concerned with proving that the joint $*$-distribution of a bi-R-diagonal pair of operators remains invariant under the multiplication with a $*$-bi-free, bi-Haar unitary pair. We begin by giving the definition of even and $*$-even sets of operators, as well as display how this notion can yield examples of bi-R-diagonal pairs.
\begin{definition}\label{bievendef} Let $(A,\varphi)$ be a non-commutative $*$-probability space, let $\emptyset\neq S\subseteq A$ and $Z,W\in A$.
\begin{enumerate}[(i)]
\item The set $S\subseteq A$ is called \emph{even} if for every $k\in\N$ and $a_1,\ldots,a_{2k+1}\in S$ we have
\[
\varphi(a_1\cdots a_{2k+1})=0,
\]
that is, all joint moments of elements of $S$ of odd order vanish,
\item the set $S\subseteq A$ is called \emph{$*$-even} if the set $S\cup S^*$ is even, so that all joint $*$-moments of elements of $S$ of odd order vanish,
\item the pair $(Z,W)$ is called \emph{even} if the set $\{Z,W\}$ is even and similarly $(Z,W)$ is called \emph{$*$-even} if the set $\{Z,W\}$ is $*$-even.
\end{enumerate}
\end{definition}
The moment-cumulant  formula yields  that the pair $(Z,W)$ is $*$-even if and only if all  bi-free cumulants of odd order with entries in $\{Z,Z^*,W,W^*\}$ vanish. It clearly follows that every bi-R-diagonal pair is $*$-even. In the setting of free probability, it is observed that products of free, self-adjoint, even elements (i.e. self-adjoint elements of non-commutative $*$-probability spaces all whose moments of odd order vanish) result in  R-diagonal elements (\cite[Theorem 15.17]{nicaspeicher}). Generalizing this to the bi-free setting, we will show that products of $*$-even pairs (where the order of the right operators is reversed in the product) yield bi-R-diagonal pairs.  For this, we have the following proposition, the proof of which will be similar to the proofs of Theorem \ref{prodbir-bifree} and Proposition \ref{prodbir-bir}.

\begin{proposition}\label{prod-bi-even}
Let $(A,\varphi)$ be a non-commutative probability space and $X,Y,Z,W\in A$ such that:
\begin{enumerate}[(a)]
\item the pairs $(X,Y)$ and $(Z,W)$ are both $*$-even,
\item the pairs $(X,Y)$ and $(Z,W)$ are $*$-bi-free.
\end{enumerate}
Then, the pair $(XZ,WY)$ is bi-R-diagonal.
\end{proposition}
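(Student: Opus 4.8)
The plan is to mimic the proofs of Theorem~\ref{prodbir-bifree} and Proposition~\ref{prodbir-bir}, the gain being that assuming \emph{both} pairs are $*$-bi-even restricts attention to bi-non-crossing partitions all of whose blocks are even, so that Proposition~\ref{evenblocks} applies directly. Concretely, I would fix $n\in\N$, $\chi\in\{l,r\}^n$ and $a_1,\ldots,a_n$ with $a_i\in\{XZ,Z^*X^*\}$ when $\chi(i)=l$ and $a_i\in\{WY,Y^*W^*\}$ when $\chi(i)=r$, define $\xh\in\{l,r\}^{2n}$ by $\xh(2i-1)=\xh(2i)=\chi(i)$, and split each $a_i$ into the two letters $c_{2i-1},c_{2i}\in\{X,X^*,Y,Y^*,Z,Z^*,W,W^*\}$ exactly as in the proof of Theorem~\ref{prodbir-bifree}. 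Theorem~\ref{openup} then gives
\[
\kappa_\chi(a_1,\ldots,a_n)=\sum_{\substack{\tau\in\BNC(\xh)\\ \tau\vee\oxh=1_{\xh}}}\ \prod_{V\in\tau}\kappa_{{\xh}|_V}\big((c_1,\ldots,c_{2n})|_V\big),\qquad \oxh=\{\{2i-1,2i\}:i=1,\ldots,n\}.
\]

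Next I would record the restrictions on the partitions that can contribute. Since $(X,Y)$ and $(Z,W)$ are $*$-bi-free, every block $V$ of a contributing $\tau$ has $\{c_i:i\in V\}$ entirely inside $\{X,X^*,Y,Y^*\}$ or entirely inside $\{Z,Z^*,W,W^*\}$; since both pairs are $*$-bi-even, the remark after Definition~\ref{bievendef} forces every such monochromatic block to have even cardinality, because a monochromatic block of odd size yields a vanishing factor. As each pair $(c_{2i-1},c_{2i})$ contains exactly one letter of each of the two kinds, exactly $n$ of the $2n$ indices are $\{X,X^*,Y,Y^*\}$-indices, and these cannot be partitioned into even monochromatic blocks when $n$ is odd; hence $\kappa_\chi(a_1,\ldots,a_n)=0$ for odd $n$, which is clause (i) of Definition~\ref{birdiagdef}.

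For even $n$, every contributing $\tau$ has only even blocks and satisfies $\tau\vee\oxh=1_{\xh}$, so Proposition~\ref{evenblocks} gives $\sxh(1)\sim_\tau\sxh(2n)$ and $\sxh(2i)\sim_\tau\sxh(2i+1)$ for $i=1,\ldots,n-1$. The heart of the argument is then a four-case inspection of the $\xh$-order: $\sxh(2m)$ carries the second letter of the $\xh$-block coming from $a_{\sx(m)}$ and $\sxh(2m+1)$ carries the first letter of the block coming from $a_{\sx(m+1)}$, and one checks that the second letter of a block lies in $\{Z,Z^*,W,W^*\}$ precisely when the corresponding term is a non-$*$-term (that is, $XZ$ or $WY$) and in $\{X,X^*,Y,Y^*\}$ precisely when it is a $*$-term, whereas the first letter of a block lies in $\{X,X^*,Y,Y^*\}$ precisely when the term is a non-$*$-term and in $\{Z,Z^*,W,W^*\}$ otherwise. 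Consequently, if $a_{\sx(m)}$ and $a_{\sx(m+1)}$ are both non-$*$-terms or both $*$-terms, the block of $\tau$ containing $\sxh(2m)$ and $\sxh(2m+1)$ is not monochromatic, so $\kappa_{{\xh}|_V}$ — and hence the whole product — vanishes; thus a nonzero $\kappa_\chi(a_1,\ldots,a_n)$ forces $(a_{\sx(1)},\ldots,a_{\sx(n)})$ to alternate between $*$-terms and non-$*$-terms.

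To close, I would note that $XZ$ and $Z^*X^*$ are left operators while $WY,Y^*W^*$ are right operators, so the $\chi$-order automatically lists all $\{XZ,Z^*X^*\}$-terms before all $\{WY,Y^*W^*\}$-terms; combined with the previous step, a nonzero $\kappa_\chi(a_1,\ldots,a_n)$ (for even $n$) requires $(a_{\sx(1)},\ldots,a_{\sx(n)})$ to be an even-length, $*$-alternating sequence in which every $XZ/Z^*X^*$-term precedes every $WY/Y^*W^*$-term. A short combinatorial classification then identifies this family with exactly the forms (a)--(e) of Definition~\ref{birdiagdef} for the pair $(XZ,WY)$, which is precisely the assertion that $(XZ,WY)$ is bi-R-diagonal. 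I do not expect a genuine obstacle: the only delicate points are the four-case computation of letters in the $\xh$-order (where one must be careful because the right operators appear reversed there) and the elementary identification of the ``even, $*$-alternating, left-block-first'' sequences with the forms (a)--(e); the real combinatorial content — that the surviving $\tau$ glue $\sxh(2m)$ to $\sxh(2m+1)$ — is already supplied by Proposition~\ref{evenblocks}, which is why this proof is shorter than that of Theorem~\ref{prodbir-bifree}.
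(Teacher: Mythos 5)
Your proposal is correct and follows essentially the same route as the paper: split each product into two letters via Theorem \ref{openup}, use $*$-bi-freeness and $*$-bi-evenness to restrict to monochromatic blocks of even size (which also disposes of odd $n$), invoke Proposition \ref{evenblocks} to force $\sxh(2m)\sim_\tau\sxh(2m+1)$, and observe that two consecutive terms of the same $*$-type make that block non-monochromatic. Your uniform ``first letter vs.\ second letter'' parity observation (checked correctly, including the reversal of right operators in the $\xh$-order) is just a cleaner packaging of the paper's explicit case-by-case inspection.
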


\begin{proof}
Let $n\in\N$, $\chi\in\{l,r\}^n$ and $a_1,\ldots,a_n\in A$ be such that 
\[
a_i\in \begin{cases}
       \{XZ,Z^*X^*\}, &\text{if } \chi (i)=l\\
        \{WY,Y^*W^*\}, &\text{if } \chi (i) = r\\   
     \end{cases} \quad (i=1,\ldots,n).
\]
Also let $\xh\in\{l,r\}^{2n},\oxh\in\BNC(\xh)$ and $c_1,\ldots,c_{2n}\in A$ be as in  Notation \ref{notation: products}, so that by Theorem \ref{openup} we have
\[
\kappa_\chi(a_1,\ldots,a_n) =  \sum_{\substack{\tau\in \BNC(\xh)\\
\tau\vee\oxh = 1_{\xh}\\}}
        \kappa_{\xh,\tau}(c_1,\ldots,c_{2n})=  \sum_{\substack{\tau\in \BNC(\xh)\\
\tau\vee\oxh = 1_{\xh}\\}}
\prod_{V\in\tau}       \kappa_{{\xh}|_V}\left({(c_1,\ldots,c_{2n})}|_{V}\right).
\]
By Proposition \ref{prop: reduction properties}, we note that if a bi-non-crossing partition $\tau\in\BNC(\xh)$ gives a non-zero contribution to the sum of cumulants above, then   for all $V\in\tau$ either
$\{c_i :i\in V\}\subseteq\{X,X^*,Y,Y^*\}$ or 
$\{c_i :i\in V\}\subseteq\{Z,Z^*,W,W^*\}$, since the pairs $(X,Y)$ and $(Z,W)$ are $*$-bi-free. Also,  every block of $\tau$ must contain an even number of elements, since both pairs $(X,Y)$ and $(Z,W)$ are $*$-even. This also shows that $\kappa_\chi(a_1,\ldots,a_n)=0$ when $n$ is odd, since then every $\tau\in\BNC(\xh)$ contains a block with an odd number of indices corresponding to either elements in the set $\{X,X^*,Y,Y^*\}$ or $\{Z,Z^*,W,W^*\}$.  As a result, for a contributing  partition $\tau\in\BNC(\xh)$ such that $\tau\vee\oxh=1_{\xh}$,  Proposition \ref{evenblocks} implies that $\sxh (1) {\sim}_{\tau} \sxh (2n)$ and $\sxh (2i) {\sim}_{\tau}\sxh(2i+1)$ for every $i=1,\ldots,n-1$.

We will now show that if the sequence $(a_{\sx(1)},\ldots,a_{\sx(n)})$ is  not $*$-alternating, then the cumulant ${\kappa}_{\chi}(a_1,\ldots,a_n)$ must vanish. Suppose the following situation occurs:
\[
\left(a_{\sx(1)},\ldots,a_{\sx(n)}\right)=(\ldots\ldots,Z^*X^*,Y^*W^*,\ldots\ldots),
\]
with $a_{\sx(m)}=Z^*X^*$ and $a_{\sx(m+1)}=Y^*W^*$ for some $m\in\{1,\ldots,n-1\}$. This implies the following situation for the $\xh$-order:
\[
\left(c_{\sxh(1)},\ldots,c_{\sxh(2n)}\right) = (\ldots\ldots,Z^*,X^*,W^*,Y^*,\ldots\ldots),
\]
with
\[
c_{\sxh(2m-1)}=Z^*,c_{\sxh(2m)}=X^*, c_{\sxh(2m+1)}=W^* \text{ and }c_{\sxh(2m+2)}=Y^*.
\]
Now, if $\tau$ is a bi-non-crossing partition contributing to the sum of cumulants above, then the block of $\tau$ containing $\sxh(2m)$ must also contain $\sxh(2m+1)$. But, since
\[
c_{\sxh(2m)}=X^* \text{ and }c_{\sxh(2m+1)}=W^*,
\]
this is impossible, due to the $*$-bi-free independence condition. Hence, when
\[
\left(a_{\sx(1)},\ldots,a_{\sx(n)}\right) = (\ldots,Z^*X^*,Y^*W^*,\ldots),
\]  
we obtain   for all $\tau\in\BNC(\xh)$ that $\kappa_{\xh,\tau}(c_1,\ldots,c_{2n})=0$, thus the bi-free cumulant ${\kappa}_{\chi}(a_1,\ldots,a_n)$ vanishes. All other remaining cases are handled similarly.
\end{proof}
For a non-commutative $*$-probability space $(A,\varphi)$ and $X_1,X_2,Y_1,Y_2\in A$, consider the pair $(Z,W)$ in the tensor product space $(\mathcal{M}_2(A),\varphi\otimes\tr)$ defined by
\[
Z = \begin{bmatrix}
0 & X_1\\ X_2 &0
\end{bmatrix} \text{ and } W = \begin{bmatrix}
0 & Y_1\\ Y_2 &0
\end{bmatrix}.
\]
Since any product with entries in $\{Z,Z^*,W,W^*\}$ containing an odd number of elements results in a matrix with zeroes across the diagonal, it follows that $(Z,W)$ is a $*$-even pair. Such pair is not necessarily bi-R-diagonal, since for instance
\[
{\kappa}_{\chi}(Z,Z)=(\varphi\otimes\tr)(Z\cdot Z) = \frac{1}{2}(\varphi(X_1 X_2) + \varphi(X_2 X_1)),
\]
which need not equal zero. However, the previous proposition implies that the product of two such pairs that are $*$-bi-free will always be bi-R-diagonal. Actually, matrix pairs arising in this manner can be used to characterize the condition of bi-R-diagonality (see Theorem \ref{together}).

We now move on to discuss one of the main results of this section, which concerns the invariance of the joint $*$-distribution of a bi-R-diagonal pair $(X,Y)$ under the multiplication by a $*$-bi-free, bi-Haar unitary pair $(u_l,u_r)$ (see Theorem \ref{invdistr}). The  proof in the free probability setting uses the  moment characterization of freeness  (see \cite[Section 1.9]{approach} and \cite[Theorem 15.10]{nicaspeicher}), which in our current setting of bi-freeness amounts to a much higher degree of complexity. Hence, for the result in the bi-free context we will use a different approach based on bi-free cumulants, thus a new proof will follow for the free probability case as well.

What the proof of the aforementioned invariance result  amounts to is showing that any $*$-alternating, bi-free $*$-cumulant $\kappa_\chi(a_1,\ldots,a_n)$ of even order with entries in the set $\{u_lX, Yu_r\}$ is the same as the bi-free cumulant $\kappa_\chi (b_1,\ldots,b_n)$, where each $b_i$ is obtained by keeping only the terms in $c_i$ which correspond to the bi-R-diagonal pair $(X,Y)$ 
and ``forgetting'' about the multiplicative terms in $\{u_l,u_l^*,u_r,u_r^*\}$. Since the cumulant $\kappa_\chi (a_1,\ldots,a_n)$ has products of operators as arguments, we will make use of Theorem \ref{openup} to isolate the product terms and  express it as sum of cumulants (of higher order). The bi-freeness and the $*$-alternating cumulant assumptions on the pairs $(X,Y)$ and $(u_l,u_r)$, along with the technical requirements of Theorem \ref{openup}, imply that only partitions that satisfy specific properties are able to contribute to this latter sum of cumulants (see properties (A) and (B) in the example and lemma below). For that reason, it is crucial to be able to identify and characterize the contributing partitions in a manner which will make certain cumulant cancellations  apparent (in combination with Lemma \ref{techlemmaBNC}) and will establish the connection with the desired bi-free cumulant $\kappa_\chi (b_1,\ldots,b_n)$.  We begin with an example which illustrates the situation.

\begin{example}
Let $\chi\in\{l,r\}^8$ be given by $\chi^{-1}(\{l\})=\{2,4,6,8\}$.   We are interested in characterizing bi-non-crossing partitions $\tau\in\BNC(\chi)$ that satisfy certain properties. First of all, note that the set $\{1,2,\ldots,8\}$ with the indices in the $\chi$-order is given as
\begin{equation*}
\begin{tikzpicture}[baseline]
			\draw[thick,dashed] (3.1,0) -- (9.1,0);
\filldraw (3.3,0) circle (0.06) node[anchor=south] {$2$};
\filldraw (4.1,0) circle (0.06) node[anchor=south] {$4$};
\filldraw (4.9,0) circle (0.06) node[anchor=south] {$6$};
\filldraw (5.7,0) circle (0.06) node[anchor=south] {$8$};
\filldraw (6.5,0) circle (0.06) node[anchor=south] {$7$};
\filldraw (7.3,0) circle (0.06) node[anchor=south] {$5$};	
\filldraw (8.1,0) circle (0.06) node[anchor=south] {$3$};
\filldraw (8.9,0) circle (0.06) node[anchor=south] {$1$};
			\end{tikzpicture}
			\qquad
				\end{equation*}
Clearly, in order to build a bi-non-crossing partition $\tau\in\BNC(\chi)$, it is enough to build a non-crossing partition on the diagram above when the indices are in the $\chi$-order (i.e. build a partition $\pi\in\NC\left(\left\{2,4,6,8,7,5,3,1\right\}\right)$) and then move the indices in the original order $\{1,2,\ldots,8\}$. Define 
\[
F_1=\{\sx(1),\sx(8)\}=\{2,1\}, G_1=\{\sx(2),\sx(3)\}=\{4,6\},
\]
and
\[
F_2=\{\sx(4),\sx(5)\}=\{8,7\}, G_2=\{\sx(6),\sx(7)\}=\{5,3\}.
\]
We are interested in partitions $\tau\in\BNC(\chi)$ such that
\begin{center}
    (A)\quad $\sx(1)\sim_\tau \sx(8)$ and $\sx(2i)\sim_\tau \sx(2i+1)$, for $i=1,2,3$,
\end{center}
i.e. partitions such that each of $F_1,G_1,F_2,G_2$ is contained in a block of $\tau$. The diagram below displays the sets $F_1,G_1,F_2,G_2$ with the indices in the $\chi$-order and the dashed line segments connect indices that need to be in the same block of a candidate partition $\tau\in\BNC(\chi)$.
\begin{equation*}
\begin{tikzpicture}[baseline]
\draw[thick,dashed] (3.1,0) -- (9.1,0);
\filldraw (3.3,0) circle (0.06) node[anchor=south] {$2$};
\filldraw (4.1,0) circle (0.06) node[anchor=south] {$4$};
\filldraw (4.9,0) circle (0.06) node[anchor=south] {$6$};
\filldraw (5.7,0) circle (0.06) node[anchor=south] {$8$};
\filldraw (6.5,0) circle (0.06) node[anchor=south] {$7$};
\filldraw (7.3,0) circle (0.06) node[anchor=south] {$5$};	
\filldraw (8.1,0) circle (0.06) node[anchor=south] {$3$};
\filldraw (8.9,0) circle (0.06) node[anchor=south] {$1$};
\draw[thick, dashed] (3.3,0) -- (3.3,-1) -- (8.9,-1) -- (8.9,0);
\draw[thick, dashed] (4.1,0) -- (4.1,-0.6) -- (4.9,-0.6) -- (4.9,0);
\draw[thick, dashed] (5.7,0) -- (5.7,-0.6) -- (6.5,-0.6) -- (6.5,0);
\draw[thick, dashed] (7.3,0) -- (7.3,-0.6) -- (8.1,-0.6) -- (8.1,0);
 \draw[blue] (4.5,0.05) ellipse (0.7cm and 0.45cm);
  \draw[red] (6.1,0.05) ellipse (0.7cm and 0.45cm);
  \draw[blue] (7.7,0.05) ellipse (0.7cm and 0.45cm);
   \draw[red] (3.3,0.05) ellipse (0.3cm and 0.45cm);
    \draw[red] (8.9,0.05) ellipse (0.3cm and 0.45cm);
    \node[draw] at (4.5,0.85) {$G_1$};
     \node[draw] at (6.1,0.85) {$F_2$};
      \node[draw] at (7.7,0.85) {$G_2$};
       \node[draw] at (3.3,0.85) {$F_1$};
        \node[draw] at (8.9,0.85) {$F_1$};
			\end{tikzpicture}
			\qquad
				\end{equation*}
We also want our candidate partitions $\tau\in\BNC(\chi)$ to satisfy
\begin{center}
    (B)\quad if $V$ is a block of $\tau$, then either $V\subseteq F_1\cup F_2$ or $V\subseteq G_1\cup G_2$,
\end{center}
i.e. any block of $\tau$ has to contain indices only from $F_1$ and $F_2$ or only from $G_1$ and $G_2$. Together with property (A) this implies that every block of $\tau$ is either a union of only some $F_i's$ or a union of only some $G_i$'s.

As a result, to form candidate bi-non-crossing partitions, we may replace the sub-blocks $F_1, F_2$ by symbols $1$ and $2$, and similarly we may replace the sub-blocks $G_1, G_2$  by symbols $\overline{1}$ and $\overline{2}$ respectively. Note that a non-crossing partition $\rho$ on  $\left\{1,\overline{1},2,\overline{2}\right\}$ all of whose blocks are contained either in $\{1,2\}$ or in $\left\{\overline{1},\overline{2}\right\}$ gives rise to a non-crossing partition on $\{\sx(1),\ldots,\sx(8)\}=\{2,4,6,8,7,5,3,1\}$ whose blocks are given by taking the corresponding unions of the $F_i$'s and the $G_i$'s according to the block structure of $\rho$. For example, the partition $\rho=\left\{\left\{1,2\right\},\left\{\overline{1}\right\},\left\{\overline{2}\right\}\right\}\in\NC\left(\left\{1,\overline{1},2,\overline{2}\right\}\right)$ corresponds to
\begin{equation*}
\begin{tikzpicture}[baseline]
			\draw[thick,dashed] (6.3,0) -- (9.1,0);
\filldraw (6.5,0) circle (0.06) node[anchor=south] {$1$};
\filldraw (7.3,0) circle (0.06) node[anchor=south] {$\overline{1}$};
\filldraw (8.1,0) circle (0.06) node[anchor=south] {$2$};
\filldraw (8.9,0) circle (0.06) node[anchor=south] {$\overline{2}$};
\draw[thick, black] (6.5,0) -- (6.5,-1) -- (8.1,-1) -- (8.1,0);	
\draw[thick, black] (7.3,0) -- (7.3,-0.6);
\draw[thick, black] (8.9,0) -- (8.9,-0.6);
			\end{tikzpicture}
	\qquad\longrightarrow	\qquad
\begin{tikzpicture}[baseline]
			\draw[thick,dashed] (3.1,0) -- (9.1,0);
\filldraw (3.3,0) circle (0.06) node[anchor=south] {$2$};
\filldraw (4.1,0) circle (0.06) node[anchor=south] {$4$};
\filldraw (4.9,0) circle (0.06) node[anchor=south] {$6$};
\filldraw (5.7,0) circle (0.06) node[anchor=south] {$8$};
\filldraw (6.5,0) circle (0.06) node[anchor=south] {$7$};
\filldraw (7.3,0) circle (0.06) node[anchor=south] {$5$};	
\filldraw (8.1,0) circle (0.06) node[anchor=south] {$3$};
\filldraw (8.9,0) circle (0.06) node[anchor=south] {$1$};
\draw[thick, black] (3.3,0) -- (3.3,-1) -- (8.9,-1) -- (8.9,0);
\draw[thick, black] (4.1,0) -- (4.1,-0.6) -- (4.9,-0.6) -- (4.9,0);
\draw[thick, black] (7.3,0) -- (7.3,-0.6) -- (8.1,-0.6) -- (8.1,0);
\draw[thick, black] (5.7,0) -- (5.7,-1);
\draw[thick, black] (6.5,0) -- (6.5,-1);
			\end{tikzpicture}
			\qquad
				\end{equation*}
where the partition on the right-hand side is non-crossing and its blocks are given by $F_1\cup F_2$ (since the indices $1$ and $2$ were in the same block of $\rho$) and the (isolated) blocks $G_1$ and $G_2$ (since the indices $\overline{1}$ and $\overline{2}$ were not connected in $\rho$).  On the other hand, the crossing partition $\left\{\left\{1,2\right\}, \left\{\overline{1},\overline{2}\right\}\right\}$ translates to 
                \begin{equation*}
\begin{tikzpicture}[baseline]
			\draw[thick,dashed] (6.3,0) -- (9.1,0);
\filldraw (6.5,0) circle (0.06) node[anchor=south] {$1$};
\filldraw (7.3,0) circle (0.06) node[anchor=south] {$\overline{1}$};
\filldraw (8.1,0) circle (0.06) node[anchor=south] {$2$};
\filldraw (8.9,0) circle (0.06) node[anchor=south] {$\overline{2}$};
\draw[thick, black] (6.5,0) -- (6.5,-1) -- (8.1,-1) -- (8.1,0);	
\draw[thick, black] (7.3,0) -- (7.3,-0.6) -- (8.9,-0.6) -- (8.9,0);

			\end{tikzpicture}
	\qquad\longrightarrow	\qquad
\begin{tikzpicture}[baseline]
			\draw[thick,dashed] (3.1,0) -- (9.1,0);
\filldraw (3.3,0) circle (0.06) node[anchor=south] {$2$};
\filldraw (4.1,0) circle (0.06) node[anchor=south] {$4$};
\filldraw (4.9,0) circle (0.06) node[anchor=south] {$6$};
\filldraw (5.7,0) circle (0.06) node[anchor=south] {$8$};
\filldraw (6.5,0) circle (0.06) node[anchor=south] {$7$};
\filldraw (7.3,0) circle (0.06) node[anchor=south] {$5$};	
\filldraw (8.1,0) circle (0.06) node[anchor=south] {$3$};
\filldraw (8.9,0) circle (0.06) node[anchor=south] {$1$};
\draw[thick, black] (3.3,0) -- (3.3,-1) -- (8.9,-1) -- (8.9,0);
\draw[thick, black] (4.1,0) -- (4.1,-0.6) -- (8.1,-0.6) -- (8.1,0);
\draw[thick, black] (4.9,0) -- (4.9,-0.6);
\draw[thick, black] (7.3,0) -- (7.3,-0.6);
\draw[thick, black] (5.7,0) -- (5.7,-1);
\draw[thick, black] (6.5,0) -- (6.5,-1);
			\end{tikzpicture}
			\qquad
				\end{equation*}
with the partition on the right-hand side above being also crossing. Therefore, in order to identify all bi-non-crossing partitions $\tau\in\BNC(\chi)$ that satisfy properties (A) and (B) above, it is enough to characterize the non-crossing partitions on $\left\{1,\overline{1},2,\overline{2}\right\}$ all of whose blocks are either in $\{1,2\}$ or in $\left\{\overline{1},\overline{2}\right\}$. But these are simply given by pairs of partitions $(\pi,\sigma)$ where $\pi\in\NC(\{1,2\}), \sigma\in\NC\left(\left\{\overline{1},\overline{2}\right\}\right)$ such that $\pi\cup\sigma\in\NC\left(\left\{1,\overline{1},2,\overline{2}\right\}\right)$, with the last condition being equivalent to $\sigma\leq \Krew_{\NC}(\pi)$, where $\Krew_{\NC}$ denotes Kreweras' complementation map on the lattice of non-crossing partitions. Note that, conversely to the process shown above, any $\tau\in\BNC(\chi)$ which satisfies properties (A) and (B) uniquely determines two non-crossing partitions $\pi,\sigma\in \NC(2)$ such that $\sigma\leq \Krew_{\NC}(\pi)$ by connecting the indices in $\{1,2\}$ and $\left\{\overline{1},\overline{2}\right\}$ according to which of the $F_i$'s and $G_i$'s are connected in the blocks of $\tau$. For instance, the partition $\tau=\left\{\{1,2\}, \{3,4,5,6\}, \{7,8\}\right\}\in\BNC(\chi)$ determines two non-crossing partitions as follows:
                 \begin{equation*}
\begin{tikzpicture}[baseline]
\draw[thick,dashed] (3.1,0) -- (9.1,0);
\filldraw (3.3,0) circle (0.06) node[anchor=south] {$2$};
\filldraw (4.1,0) circle (0.06) node[anchor=south] {$4$};
\filldraw (4.9,0) circle (0.06) node[anchor=south] {$6$};
\filldraw (5.7,0) circle (0.06) node[anchor=south] {$8$};
\filldraw (6.5,0) circle (0.06) node[anchor=south] {$7$};
\filldraw (7.3,0) circle (0.06) node[anchor=south] {$5$};	
\filldraw (8.1,0) circle (0.06) node[anchor=south] {$3$};
\filldraw (8.9,0) circle (0.06) node[anchor=south] {$1$};
\draw[thick, black] (3.3,0) -- (3.3,-1) -- (8.9,-1) -- (8.9,0);
\draw[thick, black] (4.1,0) -- (4.1,-0.6) -- (8.1,-0.6) -- (8.1,0);
\draw[thick, black] (4.9,0) -- (4.9,-0.6);
\draw[thick, black] (7.3,0) -- (7.3,-0.6);
\draw[thick, black] (5.7,0) -- (5.7,-0.3) -- (6.5,-0.3) -- (6.5,0);
 \draw[blue] (4.5,0.05) ellipse (0.7cm and 0.45cm);
  \draw[red] (6.1,0.05) ellipse (0.7cm and 0.45cm);
  \draw[blue] (7.7,0.05) ellipse (0.7cm and 0.45cm);
   \draw[red] (3.3,0.05) ellipse (0.3cm and 0.45cm);
    \draw[red] (8.9,0.05) ellipse (0.3cm and 0.45cm);
    \node[draw] at (4.5,0.85) {$G_1$};
     \node[draw] at (6.1,0.85) {$F_2$};
      \node[draw] at (7.7,0.85) {$G_2$};
       \node[draw] at (3.3,0.85) {$F_1$};
        \node[draw] at (8.9,0.85) {$F_1$};
			\end{tikzpicture}
	\quad\longrightarrow	\quad
\begin{tikzpicture}[baseline]
			\draw[thick,dashed] (6.3,0) -- (9.1,0);
\filldraw (6.5,0) circle (0.06) node[anchor=south] {$1$};
\filldraw (7.3,0) circle (0.06) node[anchor=south] {$\overline{1}$};
\filldraw (8.1,0) circle (0.06) node[anchor=south] {$2$};
\filldraw (8.9,0) circle (0.06) node[anchor=south] {$\overline{2}$};
\draw[thick, black] (7.3,0) -- (7.3,-1) -- (8.9,-1) -- (8.9,0);	
\draw[thick, black] (6.5,0) -- (6.5,-0.6);
\draw[thick, black] (8.1,0) -- (8.1,-0.6);
			\end{tikzpicture}
			\quad\longrightarrow\quad \begin{tikzpicture}[baseline]
			\draw[thick,dashed] (5.1,0) -- (6.3,0);
            \draw[thick,dashed] (7.3,0) -- (8.5,0);
\filldraw (5.3,0) circle (0.06) node[anchor=south] {$1$};
\filldraw (6.1,0) circle (0.06) node[anchor=south] {$2$};
\filldraw (7.5,0) circle (0.06) node[anchor=south] {$1$};
\filldraw (8.3,0) circle (0.06) node[anchor=south] {$2$};
\draw[thick, black] (7.5,0) -- (7.5,-1) -- (8.3,-1) -- (8.3,0);	
\draw[thick, black] (5.3,0) -- (5.3,-0.6);
\draw[thick, black] (6.1,0) -- (6.1,-0.6);
\node[draw] at (5.7,0.85) {$\pi$};
\node[draw] at (7.9,0.85) {$\sigma$};
			\end{tikzpicture}
            \quad
				\end{equation*}
 This establishes a bijection between the sets 
\[
\mathcal{P}=\left\{\tau\in\BNC(\chi):\tau\text{ satisfies properties (A) and (B)}\right\}\text{ and }\mathcal{Q}=\left\{(\pi,\sigma)\in\NC(\{1,2\})^2:\sigma\leq \Krew_{\NC}(\pi)\right\}.
\]
\end{example}
The next lemma explores the previous example in generality.
\begin{lemma}\label{lemma: partition bijection}
Let $m\in\N, \chi\in\{l,r\}^{4m}$, define
\[
F_1=\{\sx(1),\sx(4m)\}, F_i=\{\sx(4i-4),\sx(4i-3)\}\text{ and }G_i=\{\sx(4i-2),\sx(4i-1)\},\text{ for all }i=1,\ldots,m,
\]
and set
\[
F=\bigcup_{i=1}^m F_i\text{ and }G=\bigcup_{i=1}^m G_i.
\]
If $\mathcal{P}$ denotes the subset of $\BNC(\chi)$ consisting of those bi-non-crossing partitions $\tau$ that satisfy the following properties:
\begin{enumerate}[(A)]
    \item for all $1\leq i\leq m$, there exist blocks $V,V'\in\tau$ such that $F_i\subseteq V$ and $G_i\subseteq V'$,
    \item for $V\in\tau$,  either $V\subseteq F$ or $V\subseteq G$,
\end{enumerate}
and if 
\[
\mathcal{Q}=\left\{(\pi,\sigma)\in\NC(m)\times\NC(m): \sigma\leq \Krew_{\NC}(\pi)\right\},
\]
for $(\pi,\sigma)\in\mathcal{Q}$ we define a partition $\tau_{(\pi,\sigma)}$ by
\[
\tau_{(\pi,\sigma)}=\left\{\bigcup_{i\in U}F_i : U\in\pi\right\}\cup\left\{\bigcup_{i\in W}G_i: W\in\sigma\right\}.
\]
Then $\tau_{(\pi,\sigma)}\in\BNC(\chi)$ and the map $(\pi,\sigma)\mapsto\tau_{(\pi,\sigma)}$ is a bijection between the sets $\mathcal{P}$ and $\mathcal{Q}$.
\end{lemma}
\begin{proof}
Given $(\pi,\sigma)\in\mathcal{Q}$, it is clear that $\tau_{(\pi,\sigma)}$ is a partition which satisfies properties (A) and (B) above and, moreover, $\tau_{(\pi,\sigma)}$ is uniquely determined.  Arguing by way of contradiction suppose that $\tau_{(\pi,\sigma)}\notin\BNC(\chi)$, so there exist $V\neq V'\in\tau_{(\pi,\sigma)}$ and $v_1,v_2\in V, w_1,w_2\in V'$ such that
\[
v_1\prec_\chi w_1\prec_\chi v_2\prec_\chi w_2.
\]
Assume that $V=\bigcup_{i\in U}F_i$ and $V'=\bigcup_{i\in W}G_i$ for some $U\in\pi$ and $W\in\sigma$. Then there exist $i_1,i_2\in U$ and $j_1,j_2\in W$ such that $v_1\in F_{i_1}, v_2\in F_{i_2}, w_1\in G_{j_1}, w_2\in G_{j_2}$. The definition of the sets $F_i$ and $G_i$ together with the $\chi$-order relation above imply that $i_1\leq j_1<i_2\leq j_2$. If we consider $\sigma$ as a non-crossing partition on the set $\left\{\overline{1},\ldots,\overline{m}\right\}$, then $\pi\cup\sigma\notin\NC\left(\left\{1,\overline{1},\ldots,m,\overline{m}\right\}\right)$, which contradicts the fact that $\sigma\leq\Krew_{\NC}(\pi)$. We similarly arrive at a contradiction if we assume that both blocks $V,V'$ are either union of some $F_i$'s or some $G_i$'s and this shows that $\tau$ is indeed bi-non-crossing.

Conversely, if $\rho\in\BNC(\chi)$ satisfies properties (A) and (B), then every block of $\rho$ is either a union of some $F_i$'s or a union of some $G_i$'s, so that we may write $\rho=\{V_1,\ldots,V_n,V_1',\ldots,V'_k\}$, where each $V_j$ is a union of a subcollection of $\{F_1,\ldots,F_m\}$  and each $V'_j$ is a union of a subcollection of $\{G_1,\ldots,G_m\}$. Consequently, for each $1\leq j\leq n$ there exists $U_j\subseteq\{1,\ldots,m\}$ such that $V_j=\bigcup_{i\in U_j}F_i$. Since the blocks of the partition $\tau$ are clearly disjoint and their union is equal to the interval $\{1,\ldots,4m\}=F\cup G$, we see that
 the family $\pi=\{U_1,\ldots,U_n\}$ is a  partition on $\{1,\ldots,m\}$. Similarly, for each $1\leq j\leq k$ there exists $W_j\subseteq\{1,\ldots,m\}$ such that $V'_j=\bigcup_{i\in W_j}G_i$ and the family $\sigma=\{W_1,\ldots,W_k\}$ is a partition on $\{1,\ldots,m\}$. By the definitions of the sets $F_i$ and $G_i (i=1,\ldots,m)$ and since $\rho$ is bi-non-crossing, we easily see that both $\pi$ and $\sigma$ are non-crossing partitions such that $\pi\cup\sigma\in\NC\left(\left\{1,\overline{1},\ldots,m,\overline{m}\right\}\right)$, so that $\sigma\leq\Krew_{\NC}(\pi)$. Lastly, it is clear that $\rho=\tau_{(\pi,\sigma)}$ and this completes the proof.   
\end{proof}
We next proceed with a lemma that contains the central combinatorial argument required for  proving one of the main results of this section (see Theorem \ref{invdistr}). At a key point, it makes use of the cancellation property observed in Lemma \ref{techlemmaBNC}.
\begin{lemma}\label{lemmainv1}
Let $(A,\varphi)$ be a non-commutative $*$-probability space and $u_l, u_r, Z,W\in A$ such that:
\begin{enumerate}[(a)]
\item the pair $(u_l, u_r)$ is a bi-Haar unitary,
\item the pair $(Z,W)$ is $*$-even,
\item the pairs $(u_l, u_r)$ and $(Z, W)$ are $*$-bi-free.
\end{enumerate}
Let $m\in\N$, $\chi\in {\{l,r\}}^{2m}$ and $a_1,\ldots, a_{2m}\in A$ with
\[
a_i\in \begin{cases}
       \{u_l Z, Z^* {u_l}^*\}, &\text{if }  \chi (i)=l\\
        \{Wu_r, {u_r}^* W^*\}, &\text{if }  \chi (i) = r\\   
     \end{cases}\quad(i=1,\ldots,2m),
\]
such that the sequence $\left(a_{\sx(1)},\ldots,a_{\sx(2m)}\right)$
is $*$-alternating. Define $b_1,\ldots,b_{2m}\in A$ as follows:
\[
b_i = \begin{cases}
       Z, &\text{if }  a_i=u_l Z\\
        Z^*, &\text{if }  a_i=Z^* {u_l}^*\\
        W, &\text{if } a_i=W u_r\\
        W^*, &\text{if }  a_i={u_r}^* W^*\\
     \end{cases}\quad (i=1,\ldots,2m).
\]
Then, we have 
\[
\kappa_\chi (a_1,\ldots,a_{2m})  = \kappa_\chi (b_1,\ldots,b_{2m}).
\]
\end{lemma}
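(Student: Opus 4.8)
The plan is to open up the products $u_lZ$, $Z^{*}u_l^{*}$, $Wu_r$, $u_r^{*}W^{*}$ appearing in the left-hand side and then reorganise the resulting cumulant sum so that the cancellation Lemma~\ref{techlemmaBNC} applies. Concretely, set $\xh\in\{l,r\}^{4m}$ by $\xh(2i-1)=\xh(2i)=\chi(i)$, let $\oxh=\{\{2i-1,2i\}:i=1,\ldots,2m\}\in\BNC(\xh)$, and split each $a_i$ into its two factors to obtain $c_1,\ldots,c_{4m}\in A$ exactly as in the proof of Theorem~\ref{prodbir-bifree} (so that, in particular, the right operators appear reversed in the $\xh$-order). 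Theorem~\ref{openup} then gives
\[
\kappa_{\chi}(a_1,\ldots,a_{2m})=\sum_{\substack{\tau\in\BNC(\xh)\\ \tau\vee\oxh=1_{\xh}}}\kappa_{\xh,\tau}(c_1,\ldots,c_{4m}).
\]

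Next I would carry out the standard reductions. Since $(u_l,u_r)$ and $(Z,W)$ are $*$-bi-free, only a partition $\tau$ all of whose blocks $V$ satisfy $\{c_i:i\in V\}\subseteq\{u_l,u_l^{*},u_r,u_r^{*}\}$ or $\{c_i:i\in V\}\subseteq\{Z,Z^{*},W,W^{*}\}$ can contribute; I write such a $\tau=\tau_u\sqcup\tau_z$ accordingly. Because $(Z,W)$ is $*$-bi-even and because of Corollary~\ref{bihaarcum}, every block of a contributing $\tau$ must have even cardinality, every block $V$ of $\tau_u$ must be $*$-alternating with $\kappa_{\xh|_V}(c|_V)=(-1)^{|V|/2-1}C_{|V|/2-1}$, and — using that $(a_{\sx(1)},\ldots,a_{\sx(2m)})$ is $*$-alternating — the entries of $c$ read in the $\xh$-order form a $*$-alternating sequence on the $\{u_l,u_r\}$-positions and a $*$-alternating sequence on the $\{Z,W\}$-positions.

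The core step is the reorganisation. For a contributing $\tau$ one factors $\kappa_{\xh,\tau}(c)=\big(\prod_{V\in\tau_u}\kappa_{\xh|_V}(c|_V)\big)\big(\prod_{W\in\tau_z}\kappa_{\xh|_W}(c|_W)\big)$. Each $\oxh$-block contains exactly one $\{u_l,u_r\}$-entry and one $\{Z,W\}$-entry, so the $\{Z,W\}$-positions are in bijection with $\{1,\ldots,2m\}$; under this bijection $\xh$ restricts to a copy of $\chi$ and $c$ restricts, read in order, to $(b_{\sx(1)},\ldots,b_{\sx(2m)})$, whence $\tau_z$ transports to a partition $\lambda\in\BNC(\chi)$ with $\prod_{W\in\tau_z}\kappa_{\xh|_W}(c|_W)=\kappa_{\chi,\lambda}(b_1,\ldots,b_{2m})$. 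The $*$-alternating even blocks of $\tau_u$, each of size $2k$ contributing the signed Catalan number $(-1)^{k-1}C_{k-1}$, likewise encode a partition $\sigma\in\BNC(\chi)$ for which, by Lemma~\ref{lemmamobius}, $\prod_{V\in\tau_u}\kappa_{\xh|_V}(c|_V)=\prod_{V\in\sigma}(-1)^{|V|-1}C_{|V|-1}=\mu_{\BNC}(0_{\chi},\sigma)$. Finally — and this is the point where I expect the real work to lie — the connectivity constraint $\tau\vee\oxh=1_{\xh}$, i.e. the joint interaction of $\tau_u$, $\tau_z$ and the pair partition $\oxh$ subject to the bi-non-crossing condition, has to be shown equivalent to the relation $\lambda\leq K_{\BNC}(\sigma)$ in $\BNC(\chi)$; here the Kreweras complementation map of Section~\ref{sctn:bnc}, the reversal of right operators in the $\chi$-order, and planarity all have to be handled simultaneously.

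Granting this correspondence, the sum collapses:
\begin{align*}
\kappa_{\chi}(a_1,\ldots,a_{2m})&=\sum_{\sigma\in\BNC(\chi)}\mu_{\BNC}(0_{\chi},\sigma)\sum_{\substack{\lambda\in\BNC(\chi)\\ \lambda\leq K_{\BNC}(\sigma)}}\kappa_{\chi,\lambda}(b_1,\ldots,b_{2m})\\
&=\kappa_{\chi,1_{\chi}}(b_1,\ldots,b_{2m})=\kappa_{\chi}(b_1,\ldots,b_{2m}),
\end{align*}
the middle equality being Lemma~\ref{techlemmaBNC} with the free indeterminates specialised to $d_{\lambda}:=\kappa_{\chi,\lambda}(b_1,\ldots,b_{2m})$ (so that $d_{1_{\chi}}=\kappa_{\chi}(b_1,\ldots,b_{2m})$). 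The case in which $(a_{\sx(1)},\ldots,a_{\sx(2m)})$ begins with a $*$-term is symmetric. Thus the main obstacle is the last part of the reorganisation — pinning down $\sigma$ and $\lambda$ from $\tau_u$ and $\tau_z$ and verifying $\tau\vee\oxh=1_{\xh}\iff\lambda\leq K_{\BNC}(\sigma)$ — while the remaining reductions are the routine ones furnished by $*$-bi-freeness, $*$-bi-evenness, and Corollary~\ref{bihaarcum}.
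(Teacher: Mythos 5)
Your skeleton is the right one --- open the products with Theorem~\ref{openup}, discard partitions by $*$-bi-freeness and $*$-bi-evenness, recognise the bi-Haar part as a signed Catalan product, and collapse with Lemma~\ref{techlemmaBNC} --- and this is indeed how the paper proceeds. But the step you defer (``this is the point where I expect the real work to lie'') is essentially the entire proof, and the parametrization you propose for it is set up at the wrong level, so as written the argument does not close. The paper's route is: combine Proposition~\ref{evenblocks} with the homogeneity of blocks to show that the $2m$ pairs of $\xh$-consecutive entries $E_1=\{\sxh(1),\sxh(4m)\}$, $E_{i+1}=\{\sxh(2i),\sxh(2i+1)\}$ must each lie inside a single block of any contributing $\tau$, that these pairs alternate between $\{u_l,u_r\}$-type and $\{Z,W\}$-type, and hence that every block of $\tau$ is a union of such pairs. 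This halves the effective size: $\tau_u$ and $\tau_z$ are encoded by partitions $\pi_\tau,\sigma_\tau$ of an $m$-element set, the planarity argument shows $\pi_\tau\cup\sigma_\tau$ is non-crossing on the interlaced set, i.e.\ $\sigma_\tau\le K_{\NC}(\pi_\tau)$ in $\NC(m)$, and this correspondence is a bijection onto $\{(\pi,\sigma)\in\NC(m)^2:\sigma\le K_{\NC}(\pi)\}$. None of this is established in your proposal, and without the forced pairing there is no reason for the blocks of $\tau_u$ to group the $u$-entries two at a time.

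The level mismatch is not merely cosmetic. A block of $\tau_u$ of cardinality $2k$ contributes $(-1)^{k-1}C_{k-1}$ by Corollary~\ref{bihaarcum}, which matches $\mu$ of a block of size $k$, not of size $2k$; so the partition $\sigma$ for which $\prod_{V\in\tau_u}\kappa_{\xh|_V}(c|_V)=\prod_{V\in\sigma}(-1)^{|V|-1}C_{|V|-1}$ necessarily lives on $m$ points and cannot be an element of $\BNC(\chi)$ with $\chi\in\{l,r\}^{2m}$, whereas your $\lambda$ (needed for $\kappa_{\chi,\lambda}(b_1,\ldots,b_{2m})$ to make sense) genuinely lives on $2m$ points; the relation $\lambda\le K_{\BNC}(\sigma)$ then does not typecheck, and the claimed bijection with $\{(\sigma,\lambda)\in\BNC(\chi)^2:\lambda\le K_{\BNC}(\sigma)\}$ is false on cardinality grounds (the contributing $\tau$ are counted by the Fuss--Catalan number of $m$, not of $2m$: only the $\lambda$'s obtained by doubling the blocks of a $\sigma_\tau\in\NC(m)$ into unions of the pairs $G_{\overline{i}}$ ever occur). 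The fix is exactly the paper's: index by $(\pi,\sigma)\in\NC(m)^2$ with $\sigma\le K_{\NC}(\pi)$, set $h_\pi=\mu_{\NC}(0_m,\pi)$ and $d_\sigma=\prod_{V\in\sigma}\kappa_{\xh|_{\cup_{i\in V}G_{\overline{i}}}}\bigl((c_1,\ldots,c_{4m})|_{\cup_{i\in V}G_{\overline{i}}}\bigr)$, and apply Lemma~\ref{techlemmaBNC} in its constant-$\chi$ (i.e.\ $\NC(m)$) form to get $d_{1_m}=\kappa_\chi(b_1,\ldots,b_{2m})$.
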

\begin{proof}
Let $m\in\N,\chi\in {\{l,r\}}^{2m}$ and $a_1,\ldots,a_{2m},b_1,\ldots,b_{2m}$ be given as in the statement of the lemma. Also let $\xh\in\{l,r\}^{4m},\oxh\in\BNC(\xh)$ and $c_1,\ldots,c_{4m}\in A$ be as  Notation \ref{notation: products}, so that by Theorem \ref{openup} we have
\[
\kappa_\chi(a_1,\ldots,a_{2m}) =  \sum_{\substack{\tau\in \BNC(\xh)\\
\tau\vee\oxh = 1_{\xh}\\}}
        \kappa_{\xh,\tau}(c_1,\ldots,c_{4m})=  \sum_{\substack{\tau\in \BNC(\xh)\\
\tau\vee\oxh = 1_{\xh}\\}}
\prod_{V\in\tau}       \kappa_{{\xh}|_V}\left({(c_1,\ldots,c_{4m})}|_{V}\right).\tag{1}
\]
Since the sequence $\left(a_{\sx(1)},\ldots,a_{\sx(2m)}\right)$
was assumed to be $*$-alternating,  both of the sequences
\[
\left(c_{\sxh(1)},c_{\sxh(4)},c_{\sxh(5)},\ldots, c_{\sxh(4m-4)},
c_{\sxh(4m-3)}, c_{\sxh(4m)}\right),
\]
and
\[
\left(c_{\sxh(2)},c_{\sxh(3)},
c_{\sxh(6)},c_{\sxh(7)},\ldots, c_{\sxh(4m-2)},
c_{\sxh(4m-1)}\right),
\]
are also $*$-alternating (observe that for any $i\in\{1,\ldots,2m\}$, the element $a_{\sx(i)}$ corresponds to a $*$-term if and only if both the elements $c_{\sxh(2i-1)}$ and $c_{\sxh(2i)}$ correspond to $*$-terms). Define
\[
F_1=\left\{\sxh(1),\sxh(4m)\right\}, F_i=\left\{\sxh(4i-4),\sxh(4i-3)\right\}\text{ and }G_i=\left\{\sxh(4i-2),\sxh(4i-1)\right\},\text{ for all }i=1,\ldots,m,
\]
and set
\[
F=\bigcup_{i=1}^m F_i\text{ and }G=\bigcup_{i=1}^m G_i.
\]
We claim that either $F$ is equal to the set of all indices $1\leq j\leq 4m$ such that the operator $c_j$ is in the set $\{u_l,u_l^*,u_r,u_r^*\}$ and $G$ is the set of all indices such that $c_j$ is in $\{Z,Z^*,W,W^*\}$, or vice versa. Indeed, begin by assuming that $a_{\sx(1)}=u_lZ$. Since the sequence $(a_{\sx(1)},\ldots,a_{\sx(n)})$ is $*$-alternating, we must have that $a_{\sx(2)}\in\{Z^*u_l^*,u_r^*W^*\}$. If $a_{\sx(2)}=Z^* u_l^*$, then for the $\xh$-order it follows that 
\[
c_{\sxh(1)}=u_l, c_{\sxh(2)}=Z, c_{\sxh(3)}=Z^*\text{ and }c_{\sxh(4)}=u_l^*,
\]
while if $a_{\sx(2)}=u_r^*W^*$, then for the $\xh$-order we have 
\[
c_{\sxh(1)}=u_l, c_{\sxh(2)}=Z, c_{\sxh(3)}=W^*\text{ and }c_{\sxh(4)}=u_r^*,
\]
hence in both cases we see that $\{c_{\sxh(2)},c_{\sxh(3)}\}\subseteq \{Z,Z^*,W,W^*\}$. A straightforward induction argument then shows that for all $i=1,\ldots,m$ and $j\in G_i$, one has $c_j\in\{Z,Z^*,W,W^*\}$. Of course, this also implies that the set $F$  must be equal to the set of all indices that correspond to elements in $\{u_l,u_l^*,u_r,u_r^*\}$. It clearly follows that similar arguments yield an analogous outcome in the case when $a_{\sx(1)}\in\{Z^*u_l^*,Wu_r,u_r^*W^*\}$. As a result, without loss of generality we may assume that
\[
F=\left\{j : c_j\in\left\{u_l,{u_l}^*,u_r,{u_r}^*\right\}\right\}\text{ and }G=\left\{j: c_j\in\left\{Z, Z^*, W, W^*\right\}\right\},
\]
with the remaining case  handled similarly. Observe that
\[
\xh|_G=\chi\text{ and }\left(c_1,\ldots,c_{4m}\right)|_G=(b_1,\ldots,b_{2m}),
\]
where $b_1,\ldots,b_{2m}$ are as in the statement of the lemma. 
Since the pairs $(u_l,u_r)$ and $(Z,W)$ are $*$-bi-free, by Proposition \ref{prop: reduction properties} in order for a bi-non-crossing partition $\tau\in\BNC(\xh)$ to contribute to the sum appearing in $(1)$, we must have  for all $V\in\tau$ that  either
$\{c_i :i\in V\}\subseteq\{u_l,{u_l}^*,u_r,{u_r}^*\}$ or $\{c_i :i\in V\}\subseteq\{Z,Z^*,W,W^*\}$, which shows that if $V\in\tau$, then either $V\subseteq F$ or $V\subseteq G$. In addition, every block of such a partition $\tau$ must contain an even number of elements, since both pairs $(u_l,u_r)$ and $(Z,W)$ are $*$-even.  Furthermore, any bi-non-crossing partition $\tau\in\BNC(\chi)$ which contributes to the sum of cumulants also satisfies that $\tau\vee\oxh=1_{\xh}$ by equation $(1)$. The properties of such a contributing partition $\tau$ are  summarized in the next two points: 
\begin{enumerate}[(A)]
\item $\sxh (1) {\sim}_{\tau} \sxh (4m)$ and $\sxh (2i) {\sim}_{\tau}\sxh(2i+1)$ for every $i=1,\ldots,2m-1$ (this follows by an application of Proposition \ref{evenblocks}),
\item for $V\in\tau$, either $V\subseteq F$ or $V\subseteq G$.
\end{enumerate}
By Lemma \ref{lemma: partition bijection} the partitions which satisfy properties (A) and (B) above are in bijection with the set
\[
\mathcal{Q}=\left\{(\pi,\sigma)\in\NC(m)\times\NC(m):\sigma\leq\Krew_{\NC}(\pi)\right\},
\]
via the map $(\pi,\sigma)\mapsto\tau_{(\pi,\sigma)}$ given by
\[
\tau_{(\pi,\sigma)}=\left\{\bigcup_{i\in U}F_i:U\in\pi\right\}\cup\left\{\bigcup_{i\in W}G_i: W\in\sigma\right\}.
\]
For ease of notation, for $\pi,\sigma\in\NC(m)$  we define
\[
F_U=\bigcup_{i\in U} F_i\text{ and } G_W=\bigcup_{i\in W}G_i,\text{ for all }U\in\pi\text{ and }W\in\sigma,
\]
and we also let
\[
h_\pi=\prod_{U\in\pi}\kappa_{\xh|_{F_U}}\left((c_1,\ldots,c_{4m})|_{F_U}\right)\text{ and }d_\sigma=\prod_{W\in\sigma}\kappa_{\xh|_{G_W}}\left((c_1,\ldots,c_{4m})|_{G_W}\right).
\]
Observe that in the case when $\sigma=1_m$, we obtain
\[
d_{1_m}=\kappa_{\xh|_G}\left((c_1,\ldots,c_{4m})|_G\right)=\kappa_\chi(b_1,\ldots,b_{2m}).
\]
Under the aforementioned identification the sum appearing in (1) becomes:
\[
\sum_{\substack{\tau\in \BNC(\xh)\\
\tau\vee\oxh = 1_{\xh}\\}} \prod_{V\in\tau}       \kappa_{{\xh}|_V}\left({(c_1,\ldots,c_{4m})}|_{V}\right)\ = \sum_{\substack{\tau_{(\pi,\sigma)}\in \BNC(\xh)\\
\pi,\sigma\in\NC(m)\\ \sigma\leq \Krew_{\NC}(\pi)\\}} h_{\pi}\cdot d_{\sigma}   =  \sum_{\pi\in\NC(m)} h_{\pi}\cdot \left( \sum_{\substack{\sigma\in\NC(m)\\
\sigma\leq \Krew_{\NC}(\pi) \\}}d_{\sigma}\right).\tag{2}
\]
For a fixed $\pi\in\NC(m)$, we will compute the value of $h_{\pi}$. Since $F$ is equal to the set of all indices $1\leq j\leq 4m$ such that $c_j$ is in $\{u_l,u_l^*,u_r,u_r^*\}$,
it follows that for all $U\in\pi$, the bi-free cumulant
$\kappa_{\xh|_{F_U}}\left((c_1,\ldots,c_{4m})|_{F_U}\right)$
has entries in the set $\{u_l,u_l^*,u_r,u_r^*\}$ and the sequence 
$\left(c_1,\ldots,c_{4m}\right)|_{F_U}$ is $*$-alternating  when read in the induced $\xh|_{F_U}$-order. Moreover, notice that the cardinality of the set $F_U=\bigcup_{i\in U}F_i$ is equal to two times the cardinality of $U$, since each $F_i$ contains exactly two indices. Thus, by a combination of Corollary \ref{bihaarcum} and Lemma \ref{lemmamobius} we obtain
\[
h_{\pi} = \prod_{U\in\pi} {(-1)}^{|U|-1}\cdot C_{|U|-1} = \mu_{\NC}(0_n,\pi),
\]
therefore equation (2) yields 
\[
\kappa_{\chi}(a_1,\ldots,a_{2m}) = 
\sum_{\pi\in\NC(m)} \mu_{\NC}(0_n,\pi)\cdot \left( \sum_{\substack{\sigma\in\NC(m)\\
\sigma\leq \Krew_{\NC}(\pi) \\}}d_{\sigma}\right)=d_{1_m}=\kappa_\chi(b_1,\ldots,b_{2m}),
\]
where the penultimate equality follows by Lemma \ref{techlemmaBNC}. 
\end{proof}
We are now in a position to state the following theorem (which is the generalization of \cite[Theorem 15.10]{nicaspeicher} to the bi-free setting), regarding the invariance of the joint $*$-distribution of a bi-R-diagonal pair under the multiplication by a $*$-bi-free bi-Haar unitary pair.

\begin{theorem}\label{invdistr}
Let $(A,\varphi)$ be a non-commutative $*$-probability space and  $u_l, u_r, X, Y\in A$  such that:
\begin{enumerate}[(a)]
\item  the pair $(u_l, u_r)$ is a bi-Haar unitary,
\item the pairs $(u_l, u_r)$ and  $(X,Y)$ are $*$-bi-free.
\end{enumerate}
Then, the following are equivalent:
\begin{enumerate}[(i)]
\item the pair $(X, Y)$ is bi-R-diagonal,
\item the joint $*$-distribution of the pair $(X, Y)$ coincides with the joint $*$-distribution of  $(u_l X, Y u_r)$.
\end{enumerate}
\end{theorem}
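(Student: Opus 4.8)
The plan is to deduce the theorem from the absorption result Theorem \ref{prodbir-bifree} together with the combinatorial Lemma \ref{lemmainv1}. The common starting point for both implications is the observation that, under hypotheses (a) and (b), the pair $(u_l X, Y u_r)$ is \emph{always} bi-R-diagonal. Indeed, by Corollary \ref{bihaarcum} the bi-Haar unitary pair $(u_l, u_r)$ is bi-R-diagonal, it is $*$-bi-free from $(X,Y)$ by hypothesis, and so Theorem \ref{prodbir-bifree} — applied with $(u_l, u_r)$ in the role of the bi-R-diagonal factor and $(X,Y)$ in the role of the arbitrary $*$-bi-free factor, for which the product produced by the theorem is exactly $(u_l\cdot X,\, Y\cdot u_r)$ — shows that $(u_l X, Y u_r)$ is bi-R-diagonal.

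The implication $(ii)\Rightarrow(i)$ is then immediate: the defining conditions of Definition \ref{birdiagdef} are phrased purely in terms of bi-free $*$-cumulants, which by the moment-cumulant formula are determined by the joint $*$-moments, so bi-R-diagonality depends only on the joint $*$-distribution. Since $(u_l X, Y u_r)$ is bi-R-diagonal and, by assumption, has the same joint $*$-distribution as $(X,Y)$, the pair $(X,Y)$ is bi-R-diagonal.

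For $(i)\Rightarrow(ii)$, it suffices, again by the moment-cumulant formula, to check that every bi-free $*$-cumulant of $(X,Y)$ equals the corresponding one of $(u_l X, Y u_r)$ under the matching $X\leftrightarrow u_l X$, $Y\leftrightarrow Y u_r$. I would fix $\chi\in\{l,r\}^n$ and a tuple $b_1,\dots,b_n$ with $b_i\in\{X,X^*\}$ for $\chi(i)=l$ and $b_i\in\{Y,Y^*\}$ for $\chi(i)=r$, and let $a_1,\dots,a_n$ be the corresponding tuple of products ($a_i=u_l X$ when $b_i=X$, $a_i=X^*u_l^*$ when $b_i=X^*$, and likewise $Y u_r$, $u_r^*Y^*$); note that $a_i$ is a $*$-term exactly when $b_i$ is, so $(a_{\sx(1)},\dots,a_{\sx(n)})$ is alternating in $*$-terms and non-$*$-terms if and only if $(b_{\sx(1)},\dots,b_{\sx(n)})$ is. Then I would split into cases. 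If $n$ is odd, both $\kappa_\chi(b_1,\dots,b_n)$ and $\kappa_\chi(a_1,\dots,a_n)$ vanish, since both pairs are bi-R-diagonal. If $n$ is even but the $\chi$-ordered $b$-tuple is not $*$-alternating, the same holds for the $a$-tuple, so neither is of any of the forms in Definition \ref{birdiagdef}(ii) and both cumulants vanish. In the remaining case $n=2m$ is even and the $\chi$-ordered $b$-tuple — hence also the $a$-tuple — is $*$-alternating; here Lemma \ref{lemmainv1}, applied with $(Z,W)=(X,Y)$ (legitimate because $(X,Y)$ is bi-R-diagonal, hence $*$-bi-even, and is $*$-bi-free from the bi-Haar unitary pair $(u_l,u_r)$), gives precisely $\kappa_\chi(a_1,\dots,a_{2m})=\kappa_\chi(b_1,\dots,b_{2m})$.

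All the genuine combinatorial difficulty has already been isolated into Lemma \ref{lemmainv1}, so the main obstacle lies there rather than in this theorem. Within the present argument the only delicate points are bookkeeping ones: confirming that the product $(u_l\cdot X,\, Y\cdot u_r)$ is indeed the output of Theorem \ref{prodbir-bifree} under the chosen assignment of roles (in particular that the right operators appear in the order $Y u_r$), and checking that replacing each $b_i$ by the corresponding two-letter product $a_i$ leaves the $*$/non-$*$ alternation pattern of the $\chi$-ordered tuple unchanged, so that the hypothesis of Lemma \ref{lemmainv1} becomes available exactly in the case where the cumulants are not forced to vanish by bi-R-diagonality.
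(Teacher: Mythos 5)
Your proposal is correct and follows essentially the same route as the paper: both implications rest on the observation that $(u_l X, Y u_r)$ is bi-R-diagonal by Theorem \ref{prodbir-bifree}, with $(ii)\Rightarrow(i)$ following because bi-R-diagonality depends only on the joint $*$-distribution, and $(i)\Rightarrow(ii)$ reducing (after the odd-order and non-alternating cumulants are killed by bi-R-diagonality of both pairs) to the alternating even-order case handled by Lemma \ref{lemmainv1} with $(Z,W)=(X,Y)$.
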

\begin{proof}
By Theorem \ref{prodbir-bifree} the pair $(u_l X, Y u_r)$ is bi-R-diagonal and, since equality of joint $*$-distributions is equivalent to the equality of bi-free $*$-cumulants, it follows that the pair $(X,Y)$ is also bi-R-diagonal. This yields the implication $(ii)\Rightarrow (i)$. For the converse, we will show the equality of all bi-free $*$-cumulants involving the pairs $(X,Y)$ and $(u_l X,Yu_r)$. Since $(X,Y)$ is bi-R-diagonal, all bi-free cumulants with entries in $\{X,X^*,Y,Y^*\}$ that are either of odd order or that are not $*$-alternating in the $\chi$-order must vanish. The same applies to the pair $(u_l X,Yu_r)$ since it is also bi-R-diagonal by Theorem \ref{prodbir-bifree}. Therefore, it is enough to show that for all even numbers $n\in\N, \chi\in {\{l,r\}}^n$ and $a_1,\ldots,a_n\in A$ with
\[
a_i\in \begin{cases}
       \{u_l X,X^*{u_l}^*\}, &\text{if }\chi (i)=l\\
        \{Yu_r,{u_r}^*Y^*\}, &\text{if }  \chi (i) = r\\   
     \end{cases}\quad (i=1,\ldots,n),
\]
such that the sequence $(a_{\sx(1)},\ldots,a_{\sx(n)})$ is $*$-alternating, by setting
\[
b_i = \begin{cases}
       X, &\text{if }  a_i=u_l X\\
        X^*, &\text{if }  a_i=X^* {u_l}^*\\
        Y, &\text{if }  a_i=Y u_r\\
        Y^*, &\text{if }  a_i={u_r}^* Y^*\\
     \end{cases}\quad (i=1,\ldots,n),
\]
we have that
\[
\kappa_\chi (a_1,\ldots,a_n)  = \kappa_\chi (b_1,\ldots,b_n),
\]
which is exactly what an application of Lemma \ref{lemmainv1}  yields.
\end{proof}

We remark that the conclusion of the previous theorem no longer holds if the order of the multiplication of the right operators is not reversed, as the following example indicates.
\begin{example}
Let $(A,\varphi_1), (B,\varphi_2)$ be two $\mathrm{C}^*$-probability spaces and let $u_l,u_r\in A, v_l, v_r\in B$ such that both pairs $(u_l,u_r)$ and $(v_l,v_r)$ are bi-Haar unitaries. We may find a larger  $\mathrm{C}^*$-probability space $(C,\psi)$ such that the pairs $(u_l,u_r)$ and $(v_l,v_r)$ are $*$-bi-free in $(C,\psi)$  and such that $\psi$ preserves the joint $*$-distributions of the pairs $(u_l,u_r)$ and $(v_l,v_r)$ with respect to $\varphi_1$ and $\varphi_2$ respectively (see Remark \ref{remark:propertiesofbi-freepairs}). Both the pairs $(u_l, u_r)$ and $(v_l, v_r)$ are clearly bi-R-diagonal in $(C,\psi)$. However, the joint $*$-distribution of the pair $(v_l,v_r)$ does not coincide with the joint $*$-distribution of $(u_l v_l,u_r v_r)$.  Indeed, first note that
\[
\kappa_{(l,r)}(v_l,v_r^*) = \psi(v_l\cdot v_r^*)=1.
\]
By an application of Theorem \ref{openup}, we obtain
\[
\kappa_{(l,r)}(u_lv_l,v_r^* u_r^*)=\sum_{\substack{\tau\in \BNC(\xh)\\
\tau\vee\oxh = 1_{\xh}\\}}
        \kappa_{\xh,\tau}(u_l, v_l, v_r^*, u_r^*),
\]
where $\xh^{-1}(l)=\{1,2\}$ and  $\oxh=\{\{1,2\},\{3,4\}\}$. Since each of the four terms appearing as entries in the sum of cumulants above is centred (i.e. $\kappa_l(u_l)=\kappa_l(v_l)=\kappa_r(v_r^*)=\kappa_r(u_r^*)=0$), it is enough to consider bi-non-crossing partitions $\tau\in \BNC (\xh)$ with $\tau\vee\oxh=1_{\xh}$ and all of whose blocks are not singletons. These are the following two bi-non-crossing partitions:
\[
\tau_1=\{\{1,3\},\{2,4\}\}\text{ and }\tau_2=\{\{1,2,3,4\}\}=1_{\xh},
\]
thus we obtain
\[
\kappa_{(l,r)} (u_l v_l,v_r^* u_r^*)=\kappa_{\xh,\tau_1}(u_l, v_l, v_r^*, u_r^*) + \kappa_{\xh,\tau_2}(u_l, v_l, v_r^*, u_r^*)=\kappa_{\xh|_{\{1,3\}}}(u_l,v_r^*)\cdot\kappa_{\xh|_{\{2,4\}}}(v_l, u_r^*)+\kappa_{\xh} 
 (u_l, v_l, v_r^*, u_r^*).
\]
However, since $(u_l,u_r)$ and $(v_l,v_r)$ are $*$-bi-free, all of the mixed bi-free cumulants appearing on the right-hand side of the equation above must be equal to zero, hence $\kappa_\chi (u_l v_l,v_r^* u_r^*)=0$. Since
\[
\kappa_{(l,r)} (v_l, v_r^*)\neq\kappa_{(l,r)} (u_l v_l,v_r^* u_r^*),
\]
the joint $*$-distributions of the pairs $(v_l, v_r)$ and $(u_l v_l, u_r v_r)$ do not coincide.\qed
\end{example}

Gathering the results of this section, one can obtain a theorem similar to \cite[Theorem 1.2]{nicaspeichersh} (and \cite[Theorem 3.1]{dykema} for the operator-valued setting).

\begin{theorem}\label{together}
Let $(A,\varphi)$ be a non-commutative $*$-probability space and $X,Y\in A$. The following are equivalent:
\begin{enumerate}[(i)]
\item the pair $(X,Y)$ is bi-R-diagonal,
\item there exists an enlargement \footnote{An \emph{enlargement} of $(A,\varphi)$ is a non-commutative 
 probability space $(\tilde{A},\tilde{\varphi})$ such that $A\subseteq\tilde{A}$ and ${\tilde{\varphi}}|_A=\varphi$.} $(\tilde{A},\tilde{\varphi})$ of $(A,\varphi)$ and $u_l,u_r\in\tilde{A}$ such that
\begin{enumerate}[(a)]
\item the pair $(u_l,u_r)$ is a bi-Haar unitary,
\item the pairs $(u_l,u_r)$ and $(X,Y)$ are $*$-bi-free,
\item the joint $*$-distribution of the pair $(u_l X,Yu_r)$ coincides with the joint $*$-distribution of $(X,Y)$,
\pushcounter
\end{enumerate}
\item for any enlargement $(\tilde{A},\tilde{\varphi})$ of $(A,\varphi)$ and any $u_l,u_r\in\tilde{A}$ such that
\begin{enumerate}[(d)]
\popcounter
\item the pair $(u_l,u_r)$ is a bi-Haar unitary,
\item[(e)] the pairs $(u_l,u_r)$ and $(X,Y)$ are $*$-bi-free,
\end{enumerate}
one has that the the joint $*$-distribution of the pair $(u_l X,Yu_r)$ coincides with the joint $*$-distribution of $(X,Y)$,
\item[(iv)] consider the unital subalgebras $\mathcal{M}_2(\C)$ and $\mathcal{D}_2$ of $\mathcal{M}_2(A)$ consisting of scalar matrices and diagonal scalar matrices respectively and let the maps
\[
\varepsilon:\mathcal{D}\otimes {\mathcal{D}}^{\text{op}}\rightarrow\mathcal{L}(\mathcal{M}_2(A)), L,R:\mathcal{M}_2(A)\rightarrow \mathcal{L}(\mathcal{M}_2(A)), E_2:\mathcal{L}(\mathcal{M}_2(A))\rightarrow \mathcal{M}_2(\C),
\]
and
\[
F_2:\mathcal{M}_2(\C)\rightarrow\mathcal{D}_2,
\]
be as in section \ref{sctn:bbbb}. Also, in $\mathcal{M}_2(A)$ consider the pair $(Z,W)$ defined as
\[
Z = \begin{bmatrix}
0 & X\\ X^* &0
\end{bmatrix}\text{ and }W = \begin{bmatrix}
0 & Y\\ Y^* &0
\end{bmatrix}.
\]
Then,  the pair $(L(Z),R(W))$ is bi-free from $(L(\mathcal{M}_2(\C)),R({\mathcal{M}_2(\C)}^{\text{op}}))$ with amalgamation over $\mathcal{D}_2$ with respect to $F_2\circ E_2$.
\end{enumerate}
\end{theorem}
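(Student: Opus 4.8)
The plan is to prove $(i)\Rightarrow(iii)\Rightarrow(ii)\Rightarrow(i)$ for the first three conditions — which reduces everything to Theorem \ref{invdistr} together with the existence of a single suitable enlargement — and to settle $(i)\Leftrightarrow(iv)$ by stringing together the two cited results on R-cyclic pairs. For the latter, I would take $d=2$ and the matrix pair
\[
Z=\begin{bmatrix}0&X\\X^*&0\end{bmatrix},\qquad W=\begin{bmatrix}0&Y\\Y^*&0\end{bmatrix}
\]
in $\mathcal M_2(A)$, with singleton index sets $I=\{Z\}$ and $J=\{W\}$. Proposition \ref{rcyclic} says that $(X,Y)$ is bi-R-diagonal if and only if $(Z,W)$ is R-cyclic, and Theorem \ref{thmrcyclic}, specialised to $d=2$ and these index sets, says that $(Z,W)$ is R-cyclic if and only if $(L(Z),R(W))$ is bi-free from $(L(\mathcal M_2(\C)),R({\mathcal M_2(\C)}^{\text{op}}))$ with amalgamation over $\mathcal D$ in the $\mathcal D$-$\mathcal D$-non-commutative probability space $(\mathcal L(\mathcal M_2(A)),F\circ E,\varepsilon)$. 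Concatenating the two biconditionals gives $(i)\Leftrightarrow(iv)$; the only thing to verify here is that the matrices $Z,W$ and the maps $L,R,E,F,\varepsilon$ are set up with exactly the conventions of Section \ref{sctn:bbbb}, which they are by inspection.

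Next, for $(i)\Rightarrow(iii)$: given any enlargement $(\tilde A,\tilde\varphi)$ of $(A,\varphi)$ and any $u_l,u_r\in\tilde A$ satisfying $(d)$ and $(e)$, the hypotheses of Theorem \ref{invdistr} hold verbatim in $(\tilde A,\tilde\varphi)$, and since $(X,Y)$ is bi-R-diagonal, the implication $(i)\Rightarrow(ii)$ of that theorem gives precisely that the joint $*$-distribution of $(u_l X,Yu_r)$ equals that of $(X,Y)$, which is $(iii)$. For $(iii)\Rightarrow(ii)$ I only need one enlargement carrying a $*$-bi-free bi-Haar unitary pair: I would take a non-commutative $*$-probability space $(B,\psi)$ containing a bi-Haar unitary pair $(u_l,u_r)$ — for instance the one constructed in Example \ref{bihaarexample} — and form the reduced free product $(\tilde A,\tilde\varphi)=(A,\varphi)*(B,\psi)$, so that $\tilde\varphi|_A=\varphi$ and, because the $*$-subalgebras generated by $\{X,X^*,Y,Y^*\}$ and by $\{u_l,u_l^*,u_r,u_r^*\}$ are freely independent in $\tilde A$, the pairs $(X,Y)$ and $(u_l,u_r)$ are $*$-bi-free; then $(iii)$ applied to this data yields $(ii)$. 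Finally, $(ii)\Rightarrow(i)$ is the reverse direction of Theorem \ref{invdistr}: clauses $(a)$ and $(b)$ of $(ii)$ are the hypotheses of that theorem inside $(\tilde A,\tilde\varphi)$, clause $(c)$ is its condition $(ii)$, so its condition $(i)$ shows that $(X,Y)$ is bi-R-diagonal — and since $\tilde\varphi$ extends $\varphi$ and the relevant bi-free cumulants are computed from moments of products of elements of $A$, this holds already in $(A,\varphi)$.

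I do not expect a genuinely hard step: the combinatorial work is entirely absorbed into Theorem \ref{invdistr} and, for clause $(iv)$, into the already-established Proposition \ref{rcyclic} and Theorem \ref{thmrcyclic}. The point requiring the most care is the existence argument in $(iii)\Rightarrow(ii)$ — one must make sure the reduced free product is legitimately an enlargement, i.e. the free product state restricts to $\varphi$, and that freeness of the two generated $*$-algebras really does yield $*$-bi-freeness of the two pairs of faces, a fact used in the examples of Section \ref{sctn:products}; beyond that, only routine bookkeeping of the $d=2$ matrix conventions in $(iv)$ remains.
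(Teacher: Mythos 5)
Your proposal is correct and follows essentially the same route as the paper: the equivalence of (i), (ii), (iii) is reduced to Theorem \ref{invdistr} together with the free product construction of an enlargement carrying a $*$-bi-free bi-Haar unitary pair, and (i)$\Leftrightarrow$(iv) is obtained by concatenating Proposition \ref{rcyclic} with Theorem \ref{thmrcyclic}. Your explicit remark that bi-R-diagonality of $(X,Y)$ transfers from $(\tilde{A},\tilde{\varphi})$ back to $(A,\varphi)$ because the relevant cumulants depend only on moments of elements of $A$ is a worthwhile point of care that the paper leaves implicit.
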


\begin{proof}
The equivalence of $(i)$ and $(iii)$, as well as the implication $(ii)\Rightarrow (i)$ both follow from Theorem \ref{invdistr}. Also, the equivalence of $(i)$ and $(iv)$ is a result of Proposition \ref{rcyclic} and Theorem \ref{thmrcyclic}. To see that $(i)$ implies $(ii)$,  consider a non-commutative $*$-probability space $(B,\psi)$ containing a bi-Haar unitary pair $(u_l,u_r)$ and let $(\tilde{A},\tilde{\varphi})$ be an enlargement of both $(A,\varphi)$ and $(B,\psi)$ such that the pairs $(u_l,u_r)$ and $(X,Y)$ are $*$-bi-free in $(\tilde{A},\tilde{\varphi})$. Then again by an application of Theorem \ref{invdistr} the joint $*$-distribution of the pair $(u_l X,Yu_r)$ must coincide with the joint $*$-distribution of $(X,Y)$.
\end{proof}
\section*{Acknowledgements}
The author would like to thank Professor Paul Skoufranis for numerous valuable suggestions and discussions during the development of this manuscript, Professor James A. Mingo for 
 helpful remarks, especially regarding the state of Theorem \ref{birpowers} and Proposition \ref{prop:bi-R-diagpowerscounterexample}, as well as the anonymous referee for  recommendations   that greatly improved the overall presentation of the paper. This research was supported by the Fundamental Research Funds for the Central Universities Grant 3072024CFJ2404.

\bibliographystyle{alpha}
\bibliography{Bibliography}

\end{document}